\documentclass[12pt]{amsart}
\usepackage{amsmath,amssymb,multirow,color,etoolbox,verbatim}
\apptocmd{\sloppy}{\hbadness 10000\relax}{}{}

\makeatletter
\@namedef{subjclassname@2020}{%
  \textup{2020} Mathematics Subject Classification}
\makeatother

\numberwithin{equation}{section}

\newtheorem{thm}[equation]{Theorem}
\newtheorem{prop}[equation]{Proposition}
\newtheorem{lemma}[equation]{Lemma}
\newtheorem{cor}[equation]{Corollary}

\theoremstyle{definition}
\newtheorem*{rmk}{Remark}

\newtheorem{defn}[equation]{Definition}

\frenchspacing

\textwidth=13.5cm
\textheight=23cm
\parindent=16pt
\oddsidemargin=-0.5cm
\evensidemargin=-0.5cm
\topmargin=-0.5cm

\newcommand{\F}{\mathbb{F}}
\newcommand{\bP}{\mathbb{P}}

\DeclareMathOperator{\charp}{char}

\DeclareMathOperator{\GL}{GL}

\DeclareMathOperator{\PSL}{PSL}
\DeclareMathOperator{\PGL}{PGL}

\DeclareMathOperator{\Aut}{Aut}

\DeclareMathOperator{\Gal}{Gal}

\DeclareMathOperator{\Tr}{Tr}

\DeclareMathOperator{\Sym}{Sym}
\renewcommand{\bar}[1]{#1\llap{$\overline{\phantom{\rm#1}}$}}
\newcommand{\abs}[1]{\lvert #1 \rvert}

\usepackage[colorlinks,pdftex,bookmarks=false]{hyperref}

\begin{document}

\baselineskip=17pt

\title[Low-degree permutation rational functions]{Low-degree permutation rational functions over finite fields}

\author{Zhiguo Ding}
\address{
  Hunan Institute of Traffic Engineering,
  Hengyang, Hunan 421001 China
}
\email{ding8191@qq.com}

\author{Michael E. Zieve}
\address{
  Department of Mathematics,
  University of Michigan,
  Ann Arbor, MI 48109-1043 USA
}
\email{zieve@umich.edu}
\urladdr{http://www.math.lsa.umich.edu/$\sim$zieve/}

\date{\today}

\thanks{
The authors thank the referee for helpful comments.  The second author thanks the National Science Foundation for support under grant DMS-1601844.}

\begin{abstract}
We determine all degree-$4$ rational functions $f(X)\in\F_q(X)$ which permute $\bP^1(\F_q)$, and answer two questions of Ferraguti and Mi\-che\-li
about the number of such functions and the number of equivalence classes of such functions up to composing with degree-one rational functions.  We also determine all degree-$8$ rational functions $f(X)\in\F_q(X)$ which permute $\bP^1(\F_q)$ in case $q$ is sufficiently large, and do the same for degree $32$ in case either $q$ is odd or $f(X)$ is a nonsquare.  Further, for thousands of other positive integers $n$, for each sufficiently large $q$ we determine all degree-$n$ rational functions $f(X)\in\F_q(X)$ which permute $\bP^1(\F_q)$ but which are not compositions of lower-degree rational functions in $\F_q(X)$.  Some of these results are proved by using a new Galois-theoretic characterization of additive (linearized) polynomials among all rational functions, which is of independent interest.
\end{abstract}

\maketitle


\section{Introduction}

Let $q$ be a power of a prime $p$.  A \emph{permutation polynomial} is a polynomial $f(X)\in\F_q[X]$ for which the map $\alpha \mapsto f(\alpha)$ is a permutation of $\F_q$.  Such polynomials have been studied both for their own sake and for use in various applications.  Much less work has been done on \emph{permutation rational functions}, namely rational functions $f(X)\in\F_q(X)$ which permute $\bP^1(\F_q):=\F_q\cup\{\infty\}$.  However, the topic of permutation rational functions seems worthy of study, both because permutation rational functions have the same applications as permutation polynomials, and because of the construction in \cite{ZR} which shows how to use permutation rational functions over $\F_q$ to produce permutation polynomials over $\F_{q^2}$.  Surprisingly little is known about permutation rational functions: for instance, the recent paper \cite{FM} contains the first classification result in this subject, namely a classification of degree-$3$ permutation rational functions.  
In this paper we give a much simpler proof of this result, and also classify permutation
rational functions in many other degrees, sometimes under the assumption that 
certain additional conditions hold.  In particular, we answer \cite[Problems~9.1 and 9.2]{FM}.

Recall that the \emph{degree} of a nonconstant
rational function $f(X)\in\F_q(X)$ is $\max(\deg a,\deg b)$ for any coprime $a,b\in\F_q[X]$ such that $f=a/b$.  The statements of our results use the following terminology:

\begin{defn}
We say that nonconstant $f,g\in\F_q(X)$ are \emph{equivalent} if $f=\mu\circ g\circ\nu$ for some degree-one $\mu,\nu\in\F_q(X)$.
\end{defn}

Plainly if $f,g\in\F_q(X)$ are equivalent then $f$ permutes $\bP^1(\F_q)$ if and only if $g$ permutes $\bP^1(\F_q)$.  For completeness, we begin with the following essentially immediate result.

\begin{lemma}\label{intro2}
Every degree-one $f(X)\in\F_q(X)$ permutes\/ $\bP^1(\F_q)$.
A degree-two $f(X)\in\F_q(X)$ permutes\/ $\bP^1(\F_q)$ if and only if $q$ is even and $f(X)$ is equivalent to $X^2$.
\end{lemma}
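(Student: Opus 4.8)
The plan is to dispatch the two degrees separately, with the degree-two ``only if'' direction resting on a parity count of fibres. For degree one, I would simply observe that a degree-one $f(X)=(aX+b)/(cX+d)\in\F_q(X)$ necessarily has $ad-bc\neq0$, hence admits the inverse $Y\mapsto(dY-b)/(-cY+a)$, which is again a degree-one rational function; thus $f$ is a bijection of $\bP^1(\F_q)$.

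For degree two, the ``if'' direction is quick. The map $\alpha\mapsto\alpha^2$ fixes $0$ and $\infty$ and permutes $\F_q^\times$ precisely when $\gcd(2,q-1)=1$, that is, when $q$ is even (where it is a power of the Frobenius), whereas for odd $q$ it is two-to-one onto the nonzero squares. Hence $X^2$ permutes $\bP^1(\F_q)$ exactly for $q$ even, and since equivalence preserves the permutation property, every $f$ equivalent to $X^2$ with $q$ even permutes.

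For the ``only if'' direction, suppose $\deg f=2$ and $f$ permutes $\bP^1(\F_q)$. I would first treat the inseparable case: $f$ inseparable means $f\in\F_q(X^p)$, which together with $\deg f=2$ forces $p=2$ and $f=g(X^2)$ for some degree-one $g$, so $q$ is even and $f$ is equivalent to $X^2$, as desired. It then remains to rule out separable $f$, which is the crux. Viewing such an $f$ as a separable degree-two morphism $\phi\colon\bP^1\to\bP^1$ over $\F_q$, Riemann--Hurwitz gives that the different has degree $2$; since each ramification point contributes at least $1$ to the different (including the wild case in characteristic $2$), there are at most two ramification points, hence at most two branch points. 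Over a rational $\beta$ that is not a branch point the fibre consists of two distinct geometric points permuted by Galois, so it contains either $0$ or $2$ points of $\bP^1(\F_q)$, and never exactly $1$; over a rational branch point the fibre is a single, necessarily rational, point. For $\phi$ to be a bijection of the $q+1\geq3$ points of $\bP^1(\F_q)$, every rational $\beta$ would have to be a branch point, contradicting the bound of two branch points. Hence no separable degree-two function permutes, and combining the cases yields the stated classification.

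The main obstacle is the separable case. The delicate points are establishing the fibre parity uniformly over all rational $\beta$ (including $\infty$ and the value $f(\infty)$, which is handled cleanly by working projectively with the scheme-theoretic fibres of $\phi$) and bounding the number of branch points. I expect Riemann--Hurwitz to settle the latter in both the tame and wild regimes, after which the Galois-stability of the unramified fibres forces the parity that produces the contradiction.
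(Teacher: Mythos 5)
Your proof is correct, but it takes a genuinely different route from the paper's. The paper handles degree two by pure normalization: compose with a degree-one map so that the permutation $\widehat f$ fixes $0$ and $\infty$, observe that the permutation property forces the numerator to be $X^2$ (no other rational zeros) and the denominator to be constant (no rational poles), so $\widehat f=\delta X^2$, and then the single identity $\widehat f(1)=\widehat f(-1)$ forces $q$ even. Your argument instead rules out all separable degree-two maps at once via geometry: Riemann--Hurwitz bounds the number of branch points by two (valid in both the tame and wild regimes, since every ramification point contributes at least $1$ to the degree-$2$ different), and the Galois-orbit structure of an unramified fibre over a rational point shows it contains $0$ or $2$ rational points, never exactly $1$; since bijectivity on the $q+1\ge 3$ rational points would require every rational point to be a branch point, you get a contradiction. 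Both are complete. The paper's computation is more elementary and self-contained, in keeping with its stated aim of accessibility; your fibre-parity argument is heavier (it invokes Riemann--Hurwitz) but more conceptual and more portable -- it is essentially the same mechanism (Galois-stability of fibres, as in Lemma~\ref{bprat}, versus scarcity of branch points) that drives the exceptionality arguments the paper uses in higher degrees, and it explains \emph{why} no separable degree-two permutation can exist rather than verifying it coordinate by coordinate.
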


The following result is a more conceptual version of \cite[Thms.~5.1 and 6.2]{FM}.

\begin{thm}\label{intro3}
A degree-three $f(X)\in\F_q(X)$ permutes\/ $\bP^1(\F_q)$ if and only if it is equivalent to one of the following:
\begin{enumerate}
\item $X^3$ where $q\equiv 2\pmod 3$, 
\item $\nu^{-1}\circ X^3\circ\nu$ where $q\equiv 1\pmod{3}$ and 
for some $\delta\in\F_{q^2}\setminus\F_q$ we have
$\nu(X)=(X-\delta^q)/(X-\delta)$ and
$\nu^{-1}(X)=(\delta X-\delta^q)/(X-1)$,
\item $X^3-\alpha X$ where $3\mid q$ and either $\alpha=0$ or $\alpha$ is a nonsquare in\/ $\F_q$.
\end{enumerate}
\end{thm}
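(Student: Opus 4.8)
The plan is to classify degree-three permutation rational functions up to equivalence (pre- and post-composition with degree-one maps), since permuting $\bP^1(\F_q)$ is an equivalence-invariant property. Write $t=f(X)$, so $\F_q(X)/\F_q(t)$ has degree $3$; its geometric monodromy group is a transitive subgroup of $S_3$, hence $C_3$ or $S_3$ when $f$ is separable, while in characteristic $3$ there is the extra purely inseparable possibility $\F_q(f)=\F_q(X^3)$, which forces $f=\mu\circ X^3$ for a degree-one $\mu$ and so $f\sim X^3$. The workhorse throughout is that $f$ permutes $\bP^1(\F_q)$ exactly when each fiber $f^{-1}(y)$ over $y\in\bP^1(\F_q)$ contains a single $\F_q$-point; running through the Frobenius-stable degree-three fibers shows this forces every such fiber to be either one rational point of multiplicity $3$ (a rational totally ramified fiber) or one rational point together with a conjugate pair from $\F_{q^2}$ (an unramified fiber on which Frobenius acts as a transposition). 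In particular a type-$(2,1)$ branch point cannot be $\F_q$-rational, since its ramified preimage is Galois-fixed and the remaining simple preimage is then forced to be rational too, giving two rational preimages.

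I expect the main obstacle to be excluding the $S_3$ (geometrically non-Galois) case for \emph{every} $q$, not merely for large $q$. I would do this uniformly via the quadratic resolvent: let $Z$ be the smooth projective model of the fixed field of $A_3\le S_3$ inside the Galois closure, a double cover of the target line branched exactly at the transposition branch points of $f$. Riemann--Hurwitz (with the ramification types $(3),(2,1),(2,1)$ and $(2,1)^4$ allowed by the different-degree bound) gives $Z$ genus $0$ or $1$ in characteristic $\neq 2$, where the resolvent is a tame double cover; characteristic $2$ is handled similarly with the Artin--Schreier resolvent. Hence $Z(\F_q)\neq\emptyset$ by the Weil bound (indeed $q+1$ points in the genus-$0$ case, where $Z$ is a conic). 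By the previous paragraph the branch points of $Z\to\bP^1$ are non-rational, so every rational point of $Z$ lies over an unramified $y\in\F_q$ at which Frobenius lands in $A_3$, i.e.\ acts as the identity or a $3$-cycle; either way $f^{-1}(y)$ has three or zero rational points, contradicting bijectivity. Thus a degree-three permutation is geometrically cyclic, and the only delicate point of the whole argument is securing a rational point on $Z$ for the smallest $q$.

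For the geometrically cyclic case in characteristic $\neq 3$, the cover is equivalent to $X^3$ over $\overline{\F_q}$ and has exactly two totally ramified points $P_1,P_2$ over two branch points $b_1,b_2$; both pairs are canonical, hence Frobenius-stable. If $b_1,b_2\in\bP^1(\F_q)$ then so are $P_1,P_2$, and sending $P_i,b_i$ to $0,\infty$ by degree-one maps puts $f$ in the form $cX^3\sim X^3$, which permutes $\bP^1(\F_q)$ iff $X\mapsto X^3$ is injective on $\F_q^\times$, i.e.\ iff $q\equiv2\pmod3$: this is case~(1). If instead $b_2=b_1^q\notin\F_q$, I would normalize by $\nu(X)=(X-\delta^q)/(X-\delta)$ with $\delta\in\F_{q^2}\setminus\F_q$, carrying the conjugate pair to $\{0,\infty\}$ and, after descent, presenting $f$ as $\nu^{-1}\circ X^3\circ\nu\in\F_q(X)$. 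Since $\nu(X)^q=1/\nu(X)$ for $X\in\bP^1(\F_q)$, the map $\nu$ identifies $\bP^1(\F_q)$ with the order-$(q+1)$ subgroup $\{z\in\F_{q^2}^\times:z^{q+1}=1\}$, on which cubing is a bijection iff $\gcd(3,q+1)=1$, i.e.\ iff $q\equiv1\pmod3$: this is case~(2).

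Finally, in characteristic $3$ the purely inseparable case already gives $f\sim X^3$ (case~(3) with $\alpha=0$), so suppose $f$ is separable; the $S_3$-exclusion applies, so $f$ is geometrically $C_3$. The different has degree $4$, and a totally wildly ramified degree-three point contributes an even different exponent $\geq 4$, so there is a single ramified point, totally ramified, with no other ramification. Moving it to $\infty\mapsto\infty$ makes $f$ a cubic polynomial with nowhere-vanishing derivative; writing $f=aX^3+bX^2+cX+d$ gives $f'=-bX+c$, whence $b=0$ and $f=aX^3+cX+d$ is equivalent to an additive polynomial $X^3-\alpha X$. Being additive, it permutes $\F_q$ iff it is injective iff its kernel is $\{0\}$, i.e.\ iff $\alpha=0$ or $\alpha$ is a nonsquare, which is case~(3). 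Assembling the three cases completes the classification, the crux remaining the uniform exclusion of $S_3$ described above.
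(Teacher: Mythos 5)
Your route is genuinely different from the paper's, and in an appealing way: the paper verifies $q\le 9$ by computer, invokes the bound $\sqrt q>2(n-2)^2+1$ to reduce to exceptional functions for $q>9$, and then analyzes exceptionality (via an order-$3$ automorphism of $\F_{q^2}(x)$ in one proof, via factoring the numerator of $f(X)-f(Y)$ in the other). You instead try to exclude geometric monodromy $S_3$ uniformly in $q$ using the genus of the quadratic resolvent curve $Z$ and the Weil bound; this is exactly the computer-free strategy that the paper's closing remark of Section 3 sketches and defers to a subsequent paper. Your cyclic-case analysis afterwards runs parallel to the paper's first proof. However, the $S_3$-exclusion --- the step you yourself call the crux --- has a concrete hole as written.

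You argue: the branch points of $Z\to\bP^1$ are non-rational, hence ``every rational point of $Z$ lies over an unramified $y\in\F_q$ at which Frobenius lands in $A_3$,'' so $f^{-1}(y)$ has $3$ or $0$ rational points, contradicting bijectivity. Here ``unramified'' can only mean unramified in $Z\to\bP^1$. But in the ramification configuration $(3),(2,1),(2,1)$, the totally ramified branch point of $f$ is unramified in $Z\to\bP^1$ (its inertia $C_3$ lies in $A_3$), and it \emph{is} rational, being the unique point of its type and hence Frobenius-fixed. A rational point of $Z$ lying above it yields no contradiction: the $f$-fiber there is a single rational point of multiplicity $3$, which is precisely what bijectivity allows --- indeed your own first paragraph lists this as a permitted rational fiber. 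The gap is repairable by a count: in that configuration $Z$ has genus $0$, hence is a conic with $q+1\ge 3$ rational points, of which at most $2$ lie over the totally ramified branch point, so some rational point of $Z$ lies over an $f$-unramified rational $y$ and your contradiction goes through; in the configuration $(2,1)^4$ there is no totally ramified point and the argument applies verbatim.

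Two smaller debts. First, the wild cases are not really ``handled similarly'': in characteristic $2$ one must check that the different exponent of $Z\to\bP^1$ at a wild type-$(2,1)$ point equals that of $f$ there, so that $2g_Z-2\le -4+4$; and in characteristic $3$ an $S_3$-cover can have a single point with inertia all of $S_3$, which is then a \emph{rational branch point of $Z$}, contradicting your claim that $Z$-branch points are the transposition points of $f$ (fortunately, in that configuration the unique type-$(2,1)$ point of $f$ is forced to be rational, so your fiber analysis already kills it without $Z$). Second, in case (2) the phrase ``after descent'' conceals a real step: from the $\F_{q^2}$-factorization $f=\theta^{-1}\circ\gamma X^3\circ\nu$ one must still produce an equivalence \emph{over} $\F_q$ with $\nu^{-1}\circ X^3\circ\nu$; the paper does this in the final paragraph of its alternate proof by showing $\theta^{-1}\circ\gamma X\circ\nu$ lies in $\F_q(X)$ because it maps three points of $\bP^1(\F_q)$ into $\bP^1(\F_q)$.
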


\begin{rmk}
The functions $\nu,\nu^{-1}$ in case (2) satisfy $\nu^{-1}\circ\nu=X=\nu\circ\nu^{-1}$, and also $\nu(\bP^1(\F_q))$ is the set $\Lambda$ of $(q+1)$-th roots of unity in $\F_{q^2}$ \cite[Lemma~3.1]{ZR}.  Thus $\nu^{-1}\circ X^n\circ\nu$ permutes $\bP^1(\F_q)$ if and only if $X^n$ permutes $\Lambda$, i.e., $(n,q+1)=1$.  Moreover, $\nu^{-1}\circ X^n\circ\nu$ is in $\F_q(X)$ \cite{DZ}.  These functions $\nu^{-1}\circ X^n\circ\nu$ are instances of the general class of \emph{R\'edei functions}; see \cite{DZ}.  We note that \cite[Thms.~5.1 and 6.2]{FM} present the functions in (2) as rational functions whose coefficients satisfy certain conditions, and that from the presentation in \cite{FM} one would not expect these functions to have analogues in other degrees.
\end{rmk}

\begin{rmk}
The functions in (1) and (3) of Theorem~\ref{intro3} are members of well-known classes of permutation polynomials.  Specifically, $X^n$ permutes $\F_q$ when $(n,q-1)=1$.  An \emph{additive polynomial} is a polynomial of the form $\sum_{i=0}^r \alpha_i X^{p^i}$ with $\alpha_i\in\F_q$ and $p:=\charp(\F_q)$; any such polynomial permutes $\F_q$ if and only if it has no nonzero roots in $\F_q$.  (Additive polynomials are sometimes called ``linearized polynomials" or ``$p$-polynomials".)
\end{rmk}

Theorem~\ref{intro3} was proved in \cite{FM} via a long and complicated argument involving computer calculations of prime components of certain ideals, among other things.  We give two very short non-computational proofs of Theorem~\ref{intro3} using different methods than \cite{FM}; we hope that the new understanding provided by these proofs will help readers adapt our methods to address further questions.  We then treat the much more difficult case of degree-$4$ 
permutation rational functions, which requires different methods.

\begin{thm}\label{intro4}
A degree-four $f(X)\in\F_q(X)$ permutes\/ $\bP^1(\F_q)$ if and only if one of the following holds:
\begin{enumerate}
\item $q$ is odd and $f(X)$ is equivalent to \[\frac{X^4-2\alpha X^2-8\beta X+\alpha^2}{ X^3+\alpha X+\beta}\] for some $\alpha,\beta\in\F_q$ such that $X^3+\alpha X+\beta$ is irreducible in\/ $\F_q[X]$,
\item $q$ is even and $f(X)$ is equivalent to $X^4+\alpha X^2+\beta X$ for some $\alpha,\beta\in\F_q$ such that $X^3+\alpha X+\beta$ has no roots in\/ $\F_q^*$,
\item $q\le 8$ and $f(X)$ is equivalent to a rational function in Table~\emph{1} on page~\emph{\pageref{tab}}.
\end{enumerate}
Moreover, $f(X)$ is exceptional (cf.\ Definition~\emph{\ref{defexc}}) if and only if \emph{(1)} or \emph{(2)} holds.
\end{thm}

\begin{rmk}
The functions in (2) of Theorem~\ref{intro4} are additive polynomials.  The functions in (1) are a new class of
permutation rational functions (although the characteristic zero
analogue of these functions appears in \cite[Thm.~7.1]{GMS}).
At the end of Section~\ref{sec4} we will give a single form which combines (1) and (2).  We will also determine precisely when two functions appearing in the conclusion of the above result are equivalent; in particular, for any fixed odd $q$ we will show that any two functions in (1) are equivalent.  We note that the function in (1) is $4$ times the map on $X$-coordinates induced by the multiplication-by-$2$ endomorphism of the elliptic curve $Y^2=X^3+\alpha X+\beta$, and as such it is a member of a large class of permutation rational functions which are coordinate projections of elliptic curve isogenies.  We will address such functions in a subsequent paper, where we will give an alternate proof of Theorem~\ref{intro4} based on elliptic curve arguments.
\end{rmk}

\begin{rmk}
After completing this research, we learned that Hou has independently and simultaneously studied degree-$4$ permutation rational functions \cite{Hou}.  Theorem~\ref{intro4} corrects and refines Hou's result, which shows that every degree-four permutation rational function either satisfies one of (1)--(3) of Theorem~\ref{intro4} or is equivalent to one of several other classes of examples.  Our result shows that Hou's extra classes of examples are superfluous, in the sense that they are repetitions of (1)--(3).  In particular, Hou lists a family of examples over each finite field of characteristic $3$, and by our result these are all equivalent to the functions in (1).  However, we note that this equivalence is not immediate, since to write down the equivalence one must show that the union of the images of $\bP^1(\F_q)$ under the two rational functions $X^4/(X-1)$ and $X^3+X^2$ is all of $\bP^1(\F_q)$.  This fact seems nontrivial and interesting for its own sake; we will explain it elsewhere.  Also Hou shows that for $q\le 7$ any degree-four permutation rational function which does not satisfy (1) or (2) must be equivalent to one of $19$ specific functions, while our result replaces Hou's list of $19$ functions by a list of $14$ functions.  Finally, the three permutations of $\bP^1(\F_8)$ in our Table~1 are counterexamples to Hou's result.  Hou's proof is quite long and computational, involving among other things the computation of a polynomial in five variables having more than $100$ terms.  By contrast, our proof is short and conceptual, using a completely different approach based on Galois theory.  Moreover, it does not seem to be quicker to deduce Theorem~\ref{intro4} from Hou's result than to prove Theorem~\ref{intro4} directly.  We apply our Theorem~\ref{intro4} to answer \cite[Problems~9.1 and 9.2]{FM} about the number of degree-$4$ permutation rational functions in $\F_q(X)$ and the number of equivalence classes of such functions; it does not seem to be possible to use Hou's result to answer these questions.
\end{rmk}

\begin{defn}\label{defexc} A rational function $f(X)\in\F_q(X)$ is \emph{exceptional} if $f(X)$ permutes\/ $\bP^1(\F_{q^\ell})$ for infinitely many integers $\ell$.
\end{defn}

Since bijectivity of $f(X)$ on $\bP^1(\F_{q^\ell})$ implies bijectivity of $f(X)$ on $\bP^1(\F_q)$, we see that every exceptional rational function in $\F_q(X)$ permutes $\bP^1(\F_q)$.  The following quantitative converse was proved in \cite[Thm.~2.5]{GTZ}:

\begin{lemma}\label{introbd}
If $f(X)\in\F_q(X)$ has degree $n\ge 2$ and permutes\/ $\bP^1(\F_q)$, where 
\[
\sqrt{q}>2(n-2)^2+1,
\]
then $f(X)$ is exceptional.  This inequality
holds in particular when $q\ge 4n^4$.
\end{lemma}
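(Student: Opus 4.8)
The plan is to convert both hypotheses into statements about the \emph{off-diagonal correspondence} of $f$ and then to pit the Hasse--Weil bound against a bound on the number of ``bad'' points. Write $f=a/b$ with coprime $a,b\in\F_q[X]$ of degree at most $n$. The numerator of $f(X)-f(Y)$ vanishes on the diagonal, so $a(X)b(Y)-a(Y)b(X)=(X-Y)\,g(X,Y)$ with $g\in\F_q[X,Y]$ of bidegree at most $(n-1,n-1)$. The projective closure of the curve $g=0$ is the off-diagonal part $W$ of the fiber product $\bP^1\times_{\bP^1}\bP^1$ formed from two copies of $f$, and, away from a bounded degenerate locus, its $\F_q$-points are exactly the off-diagonal pairs $(x,y)\in\bP^1(\F_q)^2$ with $f(x)=f(y)$. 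After replacing $f$ by its separable part — a purely inseparable factor $X\mapsto X^{p^i}$ is itself a bijection of $\bP^1(\F_q)$ and does not affect exceptionality — I would invoke the standard dichotomy from the theory of exceptional covers: $f$ is exceptional over $\F_q$ if and only if $W$ has no absolutely irreducible component defined over $\F_q$.

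Arguing by contraposition, suppose $f$ permutes $\bP^1(\F_q)$ but is not exceptional, so $W$ has an absolutely irreducible component $W_1/\F_q$, cut out by an absolutely irreducible factor $h\mid g$ in $\F_q[X,Y]$. Each of the two projections $W_1\to\bP^1$ has degree at most $n-1$, since for generic $t=f(x)$ only $n-1$ points $y\ne x$ satisfy $f(y)=t$; hence $W_1$ has bidegree $(\alpha,\beta)$ with $\alpha,\beta\le n-1$ inside $\bP^1\times\bP^1$. The arithmetic-genus formula for a curve of bidegree $(\alpha,\beta)$ yields arithmetic genus $(\alpha-1)(\beta-1)$, so the smooth projective model $\widetilde W_1$ has geometric genus $\gamma\le(\alpha-1)(\beta-1)\le(n-2)^2$.

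Now Hasse--Weil gives $\#\widetilde W_1(\F_q)\ge q+1-2\gamma\sqrt q\ge q+1-2(n-2)^2\sqrt q$. Every $\F_q$-point of $\widetilde W_1$ maps to a pair $(x,y)\in\bP^1(\F_q)^2$ with $f(x)=f(y)$, and injectivity of $f$ on $\bP^1(\F_q)$ forces $x=y$ at each such point, i.e.\ forces it to lie over the finite set $W_1\cap\Delta$. Since $\Delta$ has bidegree $(1,1)$ we get $\#(W_1\cap\Delta)\le\alpha+\beta\le2(n-1)$, and together with the finitely many points at infinity, over poles of $f$, and added in the normalization, the total number of admissible points is bounded by an explicit $B(n)=O(n)$. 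Writing $s=\sqrt q$, the hypothesis $s>2(n-2)^2+1$ gives
\[
q+1-2(n-2)^2\sqrt q=s\bigl(s-2(n-2)^2\bigr)+1>s+1\ge B(n),
\]
where the final inequality reduces to $(n-2)(n-3)\ge0$ and so holds for every integer $n\ge2$ (the few smallest $n$ being checked directly if one's explicit $B(n)$ is slightly larger). This contradicts $\#\widetilde W_1(\F_q)\le B(n)$, so $f$ must be exceptional. Finally $q\ge4n^4$ gives $\sqrt q\ge2n^2>2n^2-8n+9=2(n-2)^2+1$ for $n\ge2$, so the displayed numerical condition is subsumed.

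The main obstacle is the bookkeeping concentrated in the middle steps. One must (i) justify the exceptionality dichotomy in the rational-function rather than polynomial setting; (ii) confirm that \emph{both} projections of each component have degree at most $n-1$, which is exactly what makes the genus bound $(n-2)^2$ — and hence the sharp constant $2(n-2)^2$ — available; and (iii) bound uniformly the points of $\widetilde W_1$ over the diagonal, the poles, and infinity, including those introduced by normalizing the singularities of $W_1$, so that the Hasse--Weil main term genuinely dominates. The factorization of $f(X)-f(Y)$, the adjunction computation, and the closing arithmetic are all routine.
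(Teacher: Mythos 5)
Your strategy is, in outline, the right one, and in fact it is the same as the source's: the paper does not prove this lemma itself but quotes it from \cite[Thm.~2.5]{GTZ}, whose argument is exactly what you describe (off-diagonal part of the fiber product, an absolutely irreducible component over $\F_q$ supplied by non-exceptionality as in Lemma~\ref{basics}, the genus bound $(\alpha-1)(\beta-1)\le (n-2)^2$, and Weil played against a count of diagonal points). Your reduction to the separable case, the bidegree and genus bounds, and the closing estimate $q\ge 4n^4\Rightarrow\sqrt q\ge 2n^2>2(n-2)^2+1$ are all correct. The gap is in your count $B(n)$ of discarded points, and it is not cosmetic. Your final display needs $s+1\ge B(n)$ to reduce to $(n-2)(n-3)\ge 0$, which happens exactly when $B(n)=2(n-1)$; but the $B(n)$ you actually describe is $2(n-1)$ \emph{plus} separate allowances for points at infinity, points over poles of $f$, and points created by normalization. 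With any such strictly larger bound the needed inequality $2(n-2)^2+2\ge B(n)$ fails for small $n$: already with $B(n)=4(n-1)$ it becomes $n^2-6n+7\ge 0$, false for $n\in\{2,3,4\}$. Your parenthetical fallback (``check the few smallest $n$ directly'') is then a genuine obligation, not a formality: it amounts, for instance, to deciding whether a non-exceptional degree-$3$ permutation rational function exists over $\F_{11}$ or $\F_{13}$, and a degree-$4$ one over $\F_{83}$ --- computations you neither identify nor perform. Moreover $n=3$ and $n=4$ are precisely the degrees at which the paper applies this lemma (for all $q>9$, resp.\ all $q>81$), so these cases cannot be waived.

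The repair that recovers the sharp constant uniformly is to make all three of your extra error terms vanish, rather than bounding them. The projective closure $\overline{W}_1\subset\bP^1\times\bP^1$ lies inside the fiber product $\{(x,y)\in\bP^1\times\bP^1:\ f(x)=f(y)\}$ of $f$ with itself, where $f$ is viewed as a morphism $\bP^1\to\bP^1$; hence \emph{every} $\F_q$-point of the normalization $\widetilde{W}_1$ --- including those over $x=\infty$, over poles of $f$, and over singular points --- maps to a pair $(x,y)\in\bP^1(\F_q)^2$ with $f(x)=f(y)$, and injectivity of $f$ on $\bP^1(\F_q)$ forces $x=y$ with no exceptions. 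So the only points to discard are those of $\widetilde{W}_1$ lying over $\overline{W}_1\cap\Delta$, and since $\Delta$ is not a component of $\overline{W}_1$, their number is at most
\[
\sum_{P\in \overline{W}_1\cap\Delta}\#\{\text{branches of } \overline{W}_1 \text{ at } P\}
\ \le\ \sum_{P} m_P
\ \le\ \sum_{P} I_P\bigl(\overline{W}_1,\Delta\bigr)
\ =\ \alpha+\beta\ \le\ 2(n-1),
\]
the normalization points being absorbed because branches are counted against intersection multiplicities. With $B(n)=2(n-1)$ your own arithmetic then closes the proof for every $n\ge 2$ with no side cases: writing $s=\sqrt q$ and $g\le(n-2)^2$ for the genus, one has $\#\widetilde{W}_1(\F_q)\ge s\bigl(s-2(n-2)^2\bigr)+1>s+1>2(n-1)$, the last step because $s>2(n-2)^2+1\ge 2n-3$, where the final inequality is equivalent to $2(n-2)(n-3)\ge 0$.
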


In light of Lemma~\ref{introbd}, for any fixed $n$ the study of degree-$n$ permutation rational functions over $\F_q$ reduces to the study of exceptional rational functions for all but finitely many values of $q$.  Our next result classifies exceptional rational functions of degree $8$.  Note that any rational function equivalent to an exceptional rational function is exceptional.

\begin{thm}\label{intro8}
If $f(X)\in\F_q(X)$ is exceptional and $\deg(f)=8$ then $q$ is even and $f(X)$ is equivalent to an additive polynomial.  The same conclusion holds if $f(X)\in\F_q(X)$ is a degree-$8$ permutation rational function with $q>73^2$.
\end{thm}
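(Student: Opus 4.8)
The plan is to reduce the permutation statement to the exceptional statement and then classify the possible monodromy groups. For the second assertion, note that $2(8-2)^2+1 = 73$, so the hypothesis $q>73^2$ is precisely $\sqrt q > 2(8-2)^2+1$; hence Lemma~\ref{introbd} shows that every degree-$8$ permutation rational function over such $\F_q$ is exceptional, and it suffices to treat exceptional $f$. So let $f\in\F_q(X)$ be exceptional of degree $8$, put $t=f(x)$, and let $A$ and $G$ be the arithmetic and geometric monodromy groups of $f$, namely the Galois groups of the Galois closure of $\F_q(x)/\F_q(t)$ over $\F_q(t)$ and over $\overline{\F_q}(t)$, respectively. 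Then $G\trianglelefteq A$ with $A/G$ cyclic, $G$ is transitive on the degree-$8$ set $\Omega$, and by the standard dictionary between exceptionality and monodromy (see \cite{GTZ}), exceptionality of $f$ is equivalent to the statement that $A$ and $G$ have no common orbit on $\Omega\times\Omega\setminus\Delta$; equivalently, every $A$-orbit of distinct pairs splits into at least two $G$-orbits.

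The first key step is to show that $G$ is imprimitive. If $G$ were $2$-transitive, then $\Omega\times\Omega\setminus\Delta$ would be a single $G$-orbit, which would then also be the unique $A$-orbit of distinct pairs, yielding a common orbit and contradicting exceptionality. I would then invoke the fact that every primitive permutation group of degree $8$ is $2$-transitive: the almost simple ones are $\PSL_2(7)$, $\PGL_2(7)$, $A_8$, $S_8$, and each affine one is $\F_2^3\rtimes G_0$ with $G_0\le\GL_3(2)$ irreducible, but every irreducible subgroup of $\GL_3(2)$ contains a Singer cycle of order $7$ (its maximal subgroups are the reducible copies of $\Sym(4)$ and the irreducible $7{:}3$) and hence is transitive on the seven nonzero vectors, forcing $2$-transitivity. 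Thus a primitive $G$ would be $2$-transitive, which is excluded, so $G$ is imprimitive.

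I would then split on whether $A$ is primitive. If $A$ is imprimitive, there is an intermediate field $\F_q(x)\supsetneq\F_q(w)\supsetneq\F_q(t)$, which is rational by L\"uroth, so $f=f_2\circ f_1$ over $\F_q$ with $\deg f_1,\deg f_2\in\{2,4\}$; since $8=2\cdot 4$, one factor has degree $2$. Using that a composition is exceptional if and only if both factors are exceptional, each $f_i$ is exceptional, so by Lemma~\ref{intro2} the degree-$2$ factor forces $q$ even and is equivalent to $X^2$, whence by Theorem~\ref{intro4} the degree-$4$ factor lies in even characteristic and is equivalent to an additive polynomial as in case~(2). If instead $A$ is primitive, hence $2$-transitive, while $G$ is imprimitive and transitive with $A/G$ nontrivial cyclic, then running through the seven primitive groups of degree $8$ as candidates for $A$ and imposing that $G\trianglelefteq A$ be transitive, not $2$-transitive, and have cyclic quotient leaves only $A=\AGL_1(8)=\F_2^3\rtimes C_7$ with $G=\F_2^3$ the regular translation group and $A/G\cong C_7$; here the single $A$-orbit of distinct pairs does split into the seven $G$-orbits indexed by the nonzero differences, so exceptionality holds. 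In both cases $q$ is even and $G$ is elementary abelian of order $8$ acting regularly, and I would conclude by invoking our Galois-theoretic characterization of additive polynomials, which shows that a genus-$0$ cover with such geometric monodromy is equivalent to an additive polynomial.

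The main obstacle is this last step: proving, and then applying, the characterization of additive polynomials among all rational functions, i.e.\ showing that the group-theoretic and genus-$0$ data extracted above genuinely pin down the class of additive polynomials. Establishing this in characteristic $2$, where the relevant cover is wildly ramified over a single place, is where the real content lies; the purely group-theoretic reduction is comparatively routine once one knows that all primitive groups of degree $8$ are $2$-transitive. A secondary technical point is to justify the ``composition is exceptional if and only if both factors are'' criterion in the rational-function setting, and to verify in the decomposable case that the resulting function is itself equivalent to an additive polynomial rather than merely a composition of such; I expect to handle this by again passing to the geometric monodromy group and using the genus-$0$ condition via Riemann--Hurwitz to force $G$ to be the regular elementary abelian group of order $8$.
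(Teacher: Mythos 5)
Your strategy mirrors the paper's own proof: reduce the $q>73^2$ assertion to exceptionality via Lemma~\ref{introbd}, split according to whether $f(X)$ is decomposable (equivalently, whether $A$ is primitive), handle the decomposable case with Lemma~\ref{intro2} and Theorem~\ref{intro4}, and handle the indecomposable case by pinning down $G\cong(C_2)^3$ and invoking the Galois-theoretic characterization of additive polynomials. Your group theory is correct (all primitive groups of degree $8$ are $2$-transitive, and the only admissible pair is $A=\AGL_1(8)$ with $G\cong(C_2)^3$ regular). However, there are two genuine gaps. The first is in the indecomposable case, where you assert ``in both cases $q$ is even'' with no justification. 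The permutation-group computation only identifies $G$ as an abstract elementary abelian group of order $8$; it says nothing about $\charp(\F_q)$, and the characterization of additive polynomials (Proposition~\ref{add}) cannot be applied until one knows that $\deg(f)=8$ is a power of the characteristic. The paper closes this hole as follows: since $\abs{G}=\deg(f)$, the Galois closure equals $L(x)$ (Lemma~\ref{PGL}), so $G$ embeds into the automorphism group of a genus-zero function field, i.e.\ into $\PGL_2$ of an algebraically closed field; and $\PGL_2$ in odd characteristic has no subgroup isomorphic to $(C_2)^3$ (Lemma~\ref{2gps}). Your proposal contains no argument excluding odd characteristic -- indeed your closing remark that the real content lies in ``establishing this in characteristic $2$'' presupposes exactly the fact that is missing.

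The second gap is in the decomposable case. Your plan for upgrading ``composition of functions equivalent to $X^2$ and to an additive polynomial'' to ``$f$ itself is equivalent to an additive polynomial'' -- passing to the geometric monodromy group of $f$ and using Riemann--Hurwitz -- cannot work there: once one factor is equivalent to $X^2$ in characteristic $2$, we get $f'(X)=0$, so $f(X)$ is inseparable and has no monodromy groups at all. (The same issue affects your initial setup, which defines $A$ and $G$ for arbitrary exceptional $f$; inseparable $f$ must be split off before any Galois theory is invoked.) The correct repair is elementary rather than Galois-theoretic: inseparability gives $f(X)=u(X^2)$ with $u$ exceptional of degree $4$, Theorem~\ref{intro4} gives $u=\mu\circ L\circ\nu$ with $L(X)=X^4+\alpha X^2+\beta X$ additive, and then $\nu\circ X^2=X^2\circ\rho$ for some degree-one $\rho\in\F_q(X)$ (take square roots of the coefficients of $\nu$, which exist since Frobenius is bijective on $\F_q$), whence $f=\mu\circ L(X^2)\circ\rho$ is equivalent to the additive polynomial $L(X^2)$. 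With these two repairs -- the $\PGL_2$ obstruction in odd characteristic and the inseparability bookkeeping -- your argument becomes the paper's proof.
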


The following result classifies exceptional rational functions of
degree $32$, modulo the classification of degree-$16$ exceptional rational functions in characteristic $2$.

\begin{thm}\label{intro32}
A degree-$32$ rational function $f(X)\in\F_q(X)$ is exceptional if and only if $q$ is even and $f(X)$ is equivalent to either
\begin{enumerate}
\item $g(X^2)$ for some exceptional $g\in\F_q(X)$ of degree $16$, or
\item $L_1\circ\mu\circ L_2$ for some degree-one $\mu\in\F_q(X)$ and some exceptional additive $L_1,L_2\in\F_q[X]$.
\end{enumerate}
\end{thm}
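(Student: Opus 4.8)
The plan is to analyze $f$ through the geometric and arithmetic monodromy groups $G\trianglelefteq A$ of the degree-$32$ cover $\bP^1\to\bP^1$ determined by $t=f(x)$, using two reductions throughout: a composition $f=f_1\circ f_2$ of rational functions is exceptional if and only if both $f_1$ and $f_2$ are exceptional, and in characteristic $2$ the map $X^2$ is purely inseparable and therefore (vacuously) exceptional. Granting these, the \emph{if} direction is immediate. In case~(1), $f=g\circ(X^2)$ is a composition of the exceptional functions $g$ and $X^2$, hence exceptional; in case~(2), $f=L_1\circ\mu\circ L_2$ is a composition of the exceptional additive permutation polynomials $L_1,L_2$ with the degree-one (hence everywhere-bijective) map $\mu$, hence exceptional. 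Each case lives in characteristic $2$ because additive polynomials of $2$-power degree force $p=2$.

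For the \emph{only if} direction, suppose first that $f$ is inseparable. The inseparable degree of $\F_q(x)/\F_q(f(x))$ is a power of $p=\charp(\F_q)$ dividing $\deg f=32$, so $p=2$; and inseparability means $f\in\F_q(X^2)$, i.e.\ $f=g(X^2)$ with $\deg g=16$. As $f$ and $X^2$ are exceptional, so is $g$, which is case~(1). It remains to treat separable $f$, where I must show both that $\charp(\F_q)=2$ and that $f$ is equivalent to $L_1\circ\mu\circ L_2$.

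For separable $f$ I first reduce to the indecomposable case and dispose of odd characteristic. Writing $f$ as a composition of indecomposable functions, each factor is separable and exceptional of $2$-power degree, and I induct using the classifications already available: Lemma~\ref{intro2} shows the only degree-$2$ exceptional function is the inseparable $X^2$, so no separable factor has degree $2$; Theorem~\ref{intro8} shows every exceptional degree-$8$ function has $p=2$ and is equivalent to an additive polynomial; and Theorem~\ref{intro4} handles degree $4$ similarly. In odd characteristic there is then no exceptional function of degree $2$ or $8$, and since the only way to write $32=2^5$ as a product of $2$-powers that are each at least $4$ and different from $8$ is as a single factor, an odd-characteristic separable example would have to be indecomposable of degree $32$. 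In characteristic $2$ a decomposable separable $f$ must split as a degree-$4$ times a degree-$8$ factor, each equivalent to an additive polynomial; composing and absorbing the outer degree-one maps shows $f$ is equivalent to $L_1\circ\mu\circ L_2$ with $L_1,L_2$ additive, the middle map $\mu$ appearing precisely because the factors are only equivalent to (not literally) additive polynomials.

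This reduces everything to the indecomposable separable case, which is the main obstacle. Here $G$ is a primitive permutation group of degree $32$, normal in $A$ with $A/G$ cyclic, and exceptionality says exactly that every $A$-orbit on ordered pairs of distinct points splits into at least two $G$-orbits. The crux is to classify which primitive groups of degree $32$ can occur as the geometric monodromy group of a genus-zero cover \emph{and} satisfy this orbit condition: combining the Riemann--Hurwitz constraints forced by the cover $\bP^1\to\bP^1$ (which sharply limit the cycle structures of the inertia generators) with the classification of finite simple groups, one excludes all almost-simple and product-type possibilities, leaving only the affine case, in which $G$ has a regular normal elementary-abelian $2$-subgroup and $G\le\AGL(5,2)$; in particular $p=2$, which finishes the disposal of odd characteristic. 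Finally I would invoke the Galois-theoretic characterization of additive polynomials advertised in the abstract: a separable rational function whose geometric monodromy group is affine in this sense is precisely one equivalent to $L_1\circ\mu\circ L_2$ with $L_1,L_2$ additive, giving case~(2). The genus-zero-plus-CFSG classification of the admissible degree-$32$ primitive groups is the step I expect to be hardest; the rest is bookkeeping resting on the lower-degree theorems and the composition/inseparability reductions.
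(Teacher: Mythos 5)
Your ``if'' direction, inseparable case, and separable decomposable case all track the paper's proof in substance: the paper organizes by decomposability rather than separability, but the content is the same (a degree-$2$ factor forces $f'(X)=0$, hence case~(1); factors of degrees $4$ and $8$ force $q$ even and case~(2) via Theorems~\ref{intro4} and~\ref{intro8}). The genuine gap is in the indecomposable case, and it sits exactly where the theorem's content lies. Your chain there is: a genus-zero-plus-CFSG classification leaves only affine groups $G\le\AGL(5,2)$, ``in particular $p=2$.'' That last inference is a non sequitur, and the principle it implicitly invokes is false: by part~(1) of Theorem~\ref{intro4}, every odd $q$ admits degree-$4$ exceptional rational functions, and their geometric monodromy group is the regular group $(C_2)^2$ --- affine of $2$-power degree, in odd characteristic. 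So knowing that $G$ preserves an $\F_2$-vector space structure cannot by itself force $q$ to be even; something must distinguish degree $32$ from degree $4$, and your proposal never identifies what that is.

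What the paper actually does in the indecomposable case (Proposition~\ref{even}) is: first, a finite check against the database of primitive permutation groups of degree $32$, using only the purely group-theoretic conditions of Lemma~\ref{basics} ($A$ primitive, $G$ transitive normal with $A/G$ cyclic, $A_1$ and $G_1$ with a unique common orbit) --- no Riemann--Hurwitz or genus-zero input at all --- shows that $G\cong(C_2)^5$ exactly, i.e.\ $G$ is \emph{regular}. Then, since $\abs{G}=\deg f$, Lemma~\ref{PGL} gives that the Galois closure is $\F_Q(x)$, so $G$ embeds into $\Aut_{\bar\F_Q}(\bar\F_Q(x))\cong\PGL_2(\bar\F_Q)$; and Lemma~\ref{2gps} says that in odd characteristic $\PGL_2$ contains no subgroup $(C_2)^3$. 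That concrete fact --- not affineness --- is what forces $q$ even, and it is precisely what separates $(C_2)^5$ from $(C_2)^2$. Once $p=2$, Proposition~\ref{add} applies because $\abs{G}=\deg f$ is a power of $p$; note its hypothesis is regularity of $G$, not your weaker ``affine'' hypothesis, and its conclusion is equivalence to a single additive polynomial (which fits case~(2) with $L_2=X$), so your closing ``affine $\Longleftrightarrow L_1\circ\mu\circ L_2$'' characterization misstates the tool. Finally, the classification step you defer to Riemann--Hurwitz plus CFSG (essentially redoing a piece of Guralnick--M\"uller--Saxl, with wild ramification complicating the inertia analysis you sketch) is left unexecuted, whereas the paper's point is that the database check reduces it to a finite computation.
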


Our final results address indecomposable exceptional rational functions, which are defined as follows.

\begin{defn}
For any field $K$,
a rational function $f(X)\in K(X)$ of degree at least $2$ is \emph{indecomposable} if it cannot be written as $g(h(X))$ with $g,h\in K(X)$ of degree at least $2$.
\end{defn}

The importance of indecomposability comes from the following classical result \cite[Thm.~1]{F2}:

\begin{lemma}\label{introdec}
A rational function $f(X)\in\F_q(X)$ of degree at least $2$ is exceptional if and only if $f=g_1\circ g_2\circ\dots\circ g_r$ for some indecomposable exceptional $g_i\in\F_q(X)$.
\end{lemma}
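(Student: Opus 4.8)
The plan is to reduce the entire statement to one elementary observation about composing maps of a finite set, and then to supply a single piece of Galois-theoretic input only where it is genuinely needed. The key fact is:
\[
(\star)\qquad
\text{for nonconstant }g,h\in\F_q(X)\text{ and any finite }\F_Q\supseteq\F_q,\ g\circ h\text{ permutes }\bP^1(\F_Q)\iff g\text{ and }h\text{ both do.}
\]
I would prove $(\star)$ directly. Viewing $g,h$ as self-maps of the finite set $\bP^1(\F_Q)$ (a rational function over $\F_Q$ sends $\bP^1(\F_Q)$ into itself, and evaluation respects composition), the composite $g\circ h$ is a bijection of $\bP^1(\F_Q)$ iff $h$ is injective — whence, being a self-map of a finite set, bijective, so $h(\bP^1(\F_Q))=\bP^1(\F_Q)$ — and $g$ is surjective. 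Since injectivity, surjectivity and bijectivity coincide for self-maps of a finite set, this holds iff $g$ and $h$ are both bijections. Note that $(\star)$ is an exact, field-by-field equivalence, with no error terms.

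For the forward direction I would first record that every $f$ of degree at least $2$ decomposes as $f=g_1\circ\dots\circ g_r$ with each $g_i\in\F_q(X)$ indecomposable of degree at least $2$ (induction on $\deg f$: if $f$ is decomposable, write $f=g\circ h$ with both factors of smaller degree and recurse). Iterating $(\star)$ over $\F_{q^\ell}$ gives that $f$ permutes $\bP^1(\F_{q^\ell})$ iff every $g_i$ does. If $f$ is exceptional it permutes $\bP^1(\F_{q^\ell})$ for an infinite set $S$ of integers $\ell$; by this equivalence each $g_i$ permutes $\bP^1(\F_{q^\ell})$ for \emph{every} $\ell\in S$, so each $g_i$ is exceptional. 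This already exhibits the required decomposition.

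For the converse, suppose $f=g_1\circ\dots\circ g_r$ with each $g_i$ indecomposable and exceptional. By $(\star)$ it suffices to produce an infinite set of $\ell$ for which all $g_i$ simultaneously permute $\bP^1(\F_{q^\ell})$. Here the bare definition of exceptionality is insufficient, since the infinite sets on which the $g_i$ individually permute need not meet in an infinite set. The extra input I would use is the standard monodromy description: for each $g_i$ the geometric monodromy group is normal in the arithmetic monodromy group with cyclic quotient of some order $d_i$ generated by Frobenius, and the exceptionality criterion (the comparison of arithmetic and geometric orbits) is unchanged upon replacing $\F_q$ by $\F_{q^\ell}$ whenever $\gcd(\ell,d_i)=1$; so $g_i$ is exceptional over $\F_{q^\ell}$, and hence permutes $\bP^1(\F_{q^\ell})$, for every such $\ell$. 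Choosing $\ell\equiv 1\pmod{\lcm(d_1,\dots,d_r)}$ forces $\gcd(\ell,d_i)=1$ for all $i$ at once, yielding an infinite common family of fields; for these $\ell$ every $g_i$ permutes $\bP^1(\F_{q^\ell})$, so $f$ does, and therefore $f$ is exceptional.

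The main obstacle is precisely this last point. The forward direction is essentially formal once $(\star)$ is available, but the converse requires knowing that an exceptional function remains exceptional over all extension fields whose degree is coprime to its constant-field-extension degree, so that the finitely many component functions can be forced to permute over a common infinite family of fields. I would isolate that coprime-extension stability as a separate lemma — it is the one place where the Galois/monodromy theory genuinely enters — after which the assembly of both directions is routine.
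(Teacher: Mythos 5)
The paper does not prove this lemma at all: it is quoted as a classical result with a citation to \cite[Thm.~1]{F2}, so there is no internal proof to compare yours against. Judged on its own, your argument is correct in substance and is essentially the standard modern proof of Fried's theorem. Your observation $(\star)$ is exactly the right elementary backbone: since $\bP^1(\F_Q)$ is finite and evaluation respects composition, a composite is bijective on $\bP^1(\F_Q)$ if and only if both factors are, with no error terms; combined with the straightforward induction producing a decomposition into indecomposables, this settles the forward direction completely. You have also correctly isolated the one non-formal point: in the converse, the infinite sets of $\ell$ attached to the individual $g_i$ must be forced to meet, and the standard device is the one you name. Concretely, if $\Omega_i$ is the Galois closure for $g_i$ and $d_i=[A_i:G_i]$, then the arithmetic monodromy group of $g_i$ over $\F_{q^\ell}$ is, via restriction, $\Gal(\Omega_i/\Omega_i\cap\F_{q^\ell}(t))=\Gal(\Omega_i/\F_{q^{\gcd(d_i,\ell)}}(t))$, which equals $A_i$ when $\gcd(d_i,\ell)=1$, while the geometric group is unchanged; so criterion (2)(c) of Lemma~\ref{basics} transfers verbatim, exceptionality over $\F_{q^\ell}$ follows, and hence bijectivity on $\bP^1(\F_{q^\ell})$.

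Two loose ends should be patched. First, in the converse you attach monodromy groups to every $g_i$, but these are defined only for separable functions, and indecomposable exceptional factors can be inseparable: over $\F_2$ one has $X^4=X^2\circ X^2$ with each $X^2$ indecomposable and exceptional. By Lemma~\ref{indecinsep} any such factor equals $\mu\circ X^p$ with $\deg(\mu)=1$, hence permutes $\bP^1(\F_{q^\ell})$ for every $\ell$, so it imposes no congruence condition and may be assigned $d_i=1$; with that remark your choice $\ell\equiv 1\pmod{\lcm(d_1,\dots,d_r)}$ works as stated. Second, the coprime-stability lemma is asserted rather than proved; since it is the crux, it deserves the short argument sketched above, including the check that the Galois closure over $\F_{q^\ell}$ is $\Omega_i\F_{q^\ell}$. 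Neither point is a genuine obstruction, and with both supplied your proof is a complete, self-contained replacement for the citation; Fried's own route instead runs through the factorization criterion (2)(b) of Lemma~\ref{basics} applied to the numerator of $f(X)-f(Y)$, which your approach avoids.
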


In light of Lemma~\ref{introdec}, in order to classify exceptional rational functions it suffices to classify indecomposable exceptional rational functions.

\begin{thm}\label{intro128}
If $f(X)\in\F_q(X)$ is an exceptional rational function of degree $128$ then $q$ is even.
If in addition $f(X)$ is indecomposable then $f(X)$ is equivalent to an additive polynomial.
\end{thm}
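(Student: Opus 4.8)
My plan is to deduce the first assertion from the second and to prove the second by analyzing the monodromy groups of $f$. For the reduction, suppose $q$ is odd and $f$ is exceptional of degree $128$. By Lemma~\ref{introdec} we may write $f=g_1\circ\dots\circ g_r$ with each $g_i\in\F_q(X)$ indecomposable and exceptional, so that $\prod_i\deg(g_i)=128=2^7$ and hence each $\deg(g_i)=2^{k_i}$ with $k_i\ge 1$ and $\sum_i k_i=7$. Each $g_i$ is exceptional over the odd field $\F_q$, so Lemma~\ref{intro2} rules out $k_i=1$, Theorem~\ref{intro8} rules out $k_i=3$, and Theorem~\ref{intro32} rules out $k_i=5$; thus every $k_i\in\{2,4,6,7\}$. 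Since $\sum_i k_i=7$ is odd while $2,4,6$ are even, and no nonempty combination of even numbers sums to $7$, exactly one $k_i$ equals $7$ and $r=1$. Hence over an odd field every exceptional $f$ of degree $128$ is already indecomposable, and it suffices to prove the second assertion together with the fact that no indecomposable exceptional rational function of degree $128$ exists over an odd field.

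So assume $f$ is indecomposable and exceptional of degree $n=128$. Let $A$ and $G\trianglelefteq A$ be the arithmetic and geometric monodromy groups of $f$, viewed as transitive permutation groups on the set $\Omega$ of $n$ points over a generic point of the target $\bP^1$; recall that $A/G$ is cyclic, generated by the image of Frobenius. Indecomposability over $\F_q$ makes $A$ primitive, and then $G$, being a nontrivial normal subgroup of $A$, is transitive. Exceptionality says that every $A$-orbit on $(\Omega\times\Omega)\setminus\Delta$ splits into at least two $G$-orbits; in particular $A\ne G$ and $G$ is not $2$-transitive. Finally, since $f$ defines a cover $\bP^1\to\bP^1$ of genus-zero curves, Riemann--Hurwitz forces the different to have degree $2n-2=254$, a strong restriction on the ramification and hence on the generating system of $G$ by inertia generators (the genus-zero condition).

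I would then apply the O'Nan--Scott theorem to the primitive group $A$ of degree $128$. The diagonal and product types, as well as the twisted-wreath type, are excluded purely arithmetically, since each requires the degree to be a proper power of the order of a nonabelian simple group (for diagonal type $128=|T|^{m-1}$, for product type $128=\ell^m$ with $\ell\ge 5$ and $m\ge 2$), and $128$ is a $2$-power. This leaves the affine and almost simple types. In the affine case $A\le\AGL_d(p)$ with $p^d=128$, forcing $p=2$ and $d=7$; in particular $\charp(\F_q)=2$, so $q$ is even. Here the translation subgroup $\F_2^7\trianglelefteq A$ is normal, the point stabilizer is an irreducible subgroup $H\le\GL_7(2)$, and the geometric-versus-arithmetic orbit condition becomes the usual exceptionality condition for the additive structure; at this point I would invoke the paper's Galois-theoretic characterization of additive polynomials to conclude that $f$ is equivalent to an additive polynomial, finishing the affine case.

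The main obstacle is the almost simple case, where $A$ has a nonabelian simple socle $S$ with a primitive action of degree $128$, and $S\le G\le A\le\Aut(S)$ with $A/G$ cyclic. Here I would combine three constraints via the classification of finite simple groups: (i) $G$ is not $2$-transitive, which already eliminates the many degree-$128$ primitive actions that are $2$-transitive, for instance that of $\PSL_2(127)$ on $\bP^1(\F_{127})$; (ii) the genus-zero different bound of degree $254$, tested against the fixed-point and permutation-character data of the surviving actions using the Guralnick--Thompson classification of genus-zero groups; and (iii) the requirement that the cyclic quotient $A/G$ refine every orbit on off-diagonal pairs, which is very restrictive because $S\le G$ already determines the orbit structure that the outer quotient would have to split. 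I expect the bulk of the work to lie in showing that no almost simple group of degree $128$ satisfies all three conditions. Once this is established, only the affine case survives; it forces $q$ even and, by the additivity criterion, yields that $f$ is equivalent to an additive polynomial. This also shows that no indecomposable example exists in odd characteristic, which closes the gap left at the end of the first paragraph and completes the proof of both assertions.
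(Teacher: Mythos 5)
Your opening reduction is sound and matches the paper's handling of the decomposable case: over odd $q$, Lemma~\ref{intro2} and Theorems~\ref{intro8} and \ref{intro32} kill composition factors of degree $2$, $8$, $32$, and the parity argument on exponents then forces indecomposability, so everything rests on the indecomposable case (which is Proposition~\ref{even} in the paper). Your treatment of that case, however, has a genuine gap, and it is precisely at the point where the real work lies. In the affine case you assert that $A\le\AGL_7(2)$ (so $p=2$ in the group-theoretic sense) implies $\charp(\F_q)=2$. This is a non sequitur: the abstract isomorphism type of the monodromy group does not determine the characteristic of the ground field. The paper's own Theorem~\ref{intro4}(1) refutes this reasoning pattern: degree-$4$ exceptional rational functions exist over every odd $q$, and their geometric and arithmetic monodromy groups are $(C_2)^2$ and $A_4\le\AGL_2(2)$ --- affine type with $p=2$ over a field of \emph{odd} characteristic. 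What actually forces $q$ to be even at degree $128$ is the genus-zero constraint that you mention in your second paragraph but never deploy: once one knows $G\cong(C_2)^7$, so that $\abs{G}=\deg f$, Lemma~\ref{PGL} shows the Galois closure equals $\F_Q(x)$, hence $G$ embeds in $\Aut_{\F_Q}(\F_Q(x))\cong\PGL_2(\F_Q)$, and Lemma~\ref{2gps} shows that $\PGL_2$ of a field of odd characteristic contains no $(C_2)^3$. Relatedly, to invoke Proposition~\ref{add} you need $\abs{G}=\deg f$, i.e., $G$ equal to the translation subgroup $V\cong(C_2)^7$; in the affine case you only know that the socle of $A$ is $V$, while the transitive normal subgroup $G$ could a priori be strictly larger (e.g., of the form $V\rtimes C_{127}$, as happens for sub-additive covers). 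Ruling out $G\supsetneq V$ requires checking the exceptionality and cyclic-quotient conditions against all such candidates, which is exactly what the paper's finite computation over the primitive-groups database accomplishes.

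The almost simple case is not proved at all: you state where you ``expect the bulk of the work'' to lie, which is a plan rather than an argument. (At degree $128$ this case is in fact light, \emph{given} the classification of primitive groups of that degree: the non-affine primitive groups are $A_{128}$, $S_{128}$, and groups with socle $\PSL_2(127)$ in its $2$-transitive action on $\bP^1(\F_{127})$. The alternating/symmetric case is Lemma~\ref{AnSn}, and in the $\PSL_2(127)$ case $G$ contains the socle and is therefore $2$-transitive, so $A_1$ and $G_1$ share the two orbits $\{1\}$ and its complement, contradicting exceptionality via Lemma~\ref{basics}. But this argument consumes the same computational input --- the list of primitive groups of degree $128$ --- that your O'Nan--Scott detour was presumably meant to avoid.) As written, your proposal establishes neither the affine case (wrong inference about the characteristic, and $G=V$ not justified) nor the almost simple case, so the theorem is not proved.
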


In Theorem~\ref{introsmall} we will prove similar results for degrees that are not prime powers, which turns out to be an easier situation to address via our methods.  In the case of prime power degree, the degrees $2$, $3$, $4$, $8$, $32$, and $128$ in the above results are the only degrees for which our method of proof yields a conclusion of the desired form.  
It seems conceivable that one might be able to classify prime-degree exceptional rational functions by further developing the approach used for polynomials in \cite[Thm.~8.1]{FGS} and \cite[App.]{MW}, but we do not pursue that problem here.
However, with current techniques it seems quite difficult to classify degree-$9$ exceptional rational functions, for instance.  We note that
there exist indecomposable additive exceptional
polynomials of any prescribed prime power degree,
and there exist other types of indecomposable
exceptional rational functions in many classes of
prime power degrees.

Theorem~8.3 of \cite{FM} asserts that there are no exceptional rational functions of degree $6$ with nonzero derivative.  By Lemmas~\ref{introdec} and \ref{intro2}, a degree-$6$ exceptional $f(X)\in\F_q(X)$ is indecomposable if and only if $f'(X)\ne 0$,
so the nontrivial portion of \cite[Thm.~8.3]{FM} is the assertion that there are no indecomposable exceptional rational functions of degree~$6$.  We prove the following vast generalization of this result:

\begin{thm}\label{introsmall}
Suppose $f(X)\in\F_q(X)$ is an indecomposable exceptional rational function whose degree $n$ satisfies $n<4096$ and $n$ is not a prime power.  Then
$n\in\{28, 45, 325, 351, 496, 784, 819, 1225, 1456, 2025,\allowbreak 3321\}$.
\end{thm}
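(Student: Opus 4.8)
The plan is to pass to the monodromy groups and combine the genus-zero restriction with the group-theoretic criterion for exceptionality. First note that $f$ is separable: an inseparable indecomposable rational function has prime degree, whereas $n$ is not a prime power. Setting $t=f(x)$ and letting $\hat F$ be the Galois closure of $\F_q(x)/\F_q(t)$, write $A=\Gal(\hat F/\F_q(t))$ and $G=\Gal(\hat F/\overline{\F_q}(t))$ for the arithmetic and geometric monodromy groups; these act faithfully and transitively on the set $\Omega$ of $n$ roots of $f(X)-t$, with $G\trianglelefteq A$ and $A/G$ cyclic (generated by the Frobenius). Indecomposability of $f$ over $\F_q$ is equivalent to primitivity of $A$ on $\Omega$. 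By the Fried--Cohen criterion, $f$ is exceptional precisely when a generator of $A/G$ fixes no non-diagonal $G$-orbit on $\Omega\times\Omega$; in particular $G\ne A$, and $G$ is not $2$-transitive (a $2$-transitive $G$ would have a single non-diagonal orbit, necessarily $A$-invariant). So I would reduce to classifying primitive pairs $G\trianglelefteq A$ of degree $n$ with $A/G$ cyclic, $G$ of rank $\ge 3$, and $A/G$ acting fixed-point-freely on the non-diagonal $G$-orbits.

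The geometric input is that the source $\bP^1_x$ has genus $0$: Riemann--Hurwitz for $\bP^1_x\to\bP^1_t$ gives $\sum_i\operatorname{ind}(\sigma_i)=2n-2$, where the inertia generators $\sigma_i$ form a genus-zero generating tuple of $G$. The hypothesis that $n$ is not a prime power enters here to remove the affine O'Nan--Scott type, whose degree is always a prime power; this is precisely the simplification that makes the non-prime-power case tractable, since the affine type is where the rich additive-polynomial families live. For the remaining types I would invoke the classification of primitive genus-zero groups (the resolution of the Guralnick--Thompson conjecture together with its quantitative refinements), which, in any bounded range of $n$, leaves only finitely many candidate actions $(G,\Omega)$: actions of almost simple groups with socle $\PSL_2(q)$, the pairs actions of a few affine $2$-transitive groups, a small number of product-action groups, and the Suzuki groups $\mathrm{Sz}(q)$ (the Ree groups being pushed out of range, as their minimal degree exceeds $4096$).

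For each surviving candidate I would impose exceptionality: non-$2$-transitivity, the existence of an overgroup $A$ with $A/G$ cyclic (here the diagonal automorphisms of $\PSL_2$ and the field automorphisms supply the needed $A/G$), and the requirement that a generator of $A/G$ move every non-diagonal $G$-orbit. Reading off the degrees $n<4096$ that are not prime powers and survive all constraints should yield exactly the eleven listed values: the degrees $28,45,325,351,496,819,1225,3321$ arise from actions of almost simple groups with socle $\PSL_2(q)$ (on pairs, on cosets of a dihedral subgroup, or on cosets of $A_4$) together with the pairs actions of a few affine $2$-transitive groups; the degrees $784=28^2$ and $2025=45^2$ arise from product actions; and $1456$ arises from $\mathrm{Sz}(8)$ on cosets of a subgroup of order $20$.

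The main obstacle is the combined genus-zero-plus-exceptionality bookkeeping across every non-affine family: one must show, family by family and degree by degree, that no admissible genus-zero generating tuple exists except in the claimed cases, and that the pair $(A,G)$ can genuinely be realized with the fixed-point-free Frobenius action forced by exceptionality. The $\PSL_2(q)$ actions are the delicate part, because there the genus-zero condition holds for several small $q$, so one must verify exceptionality carefully and rule out every nearby non-prime-power degree at which a genus-zero group exists but the exceptionality (or genus) condition fails---for instance degree $91=\tfrac{13\cdot 14}{2}$ must be excluded even though it fits the pairs-action shape. A secondary subtlety is that $G$ need not be primitive even when $A$ is; since $G$ contains the socle of $A$, the genus-zero and composition-factor restrictions still apply, but the possible imprimitivity of $G$ must be addressed before the primitive genus-zero classification can be applied.
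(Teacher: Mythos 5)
Your overall reduction to group theory is sound and matches the paper's: separability because an inseparable indecomposable function has prime degree (Lemma~\ref{indecinsep}), primitivity of $A$ from indecomposability, the Fried--Cohen orbit criterion for exceptionality (equivalent to the paper's ``unique common orbit of $A_1$ and $G_1$''), and the exclusion of $2$-transitive $G$ (the paper's Lemma~\ref{AnSn} is the special case $A\in\{A_n,S_n\}$). But the engine of your proof --- the tame Riemann--Hurwitz count $\sum_i\operatorname{ind}(\sigma_i)=2n-2$ together with the classification of primitive genus-zero groups coming from the Guralnick--Thompson conjecture --- has a genuine gap: both of these are characteristic-zero (equivalently, tame) statements. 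Here the ground field is $\F_q$, and the candidate geometric monodromy groups (almost simple groups with socle $\PSL_2(q)$ in characteristic dividing $q$, Suzuki groups in characteristic $2$, etc.) are exactly the ones for which wild ramification is unavoidable; then the Riemann--Hurwitz formula acquires higher-ramification (different) terms, inertia groups need not be cyclic, and no complete quantitative classification of primitive genus-zero monodromy groups in positive characteristic exists in the literature. So the step ``invoke the classification, then do the bookkeeping'' cannot be carried out as stated; making it rigorous would be a substantial research project, not a finite verification.

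The second issue is that your plan proves more than the theorem asks, at a real cost. Theorem~\ref{introsmall} is only a \emph{necessity} statement about degrees, so no genus condition and no realizability of the pair $(A,G)$ by an actual rational function is needed. The paper's proof exploits exactly this: it imposes only the purely group-theoretic necessary conditions --- $A$ primitive of degree $n$ with $A\notin\{A_n,S_n\}$, $G\trianglelefteq A$ nontrivial, proper, transitive, $A/G$ cyclic, and $A_1,G_1$ sharing a unique orbit --- and checks them by a finite Magma computation over the database of primitive permutation groups of degree less than $4096$ (this database is the sole reason for the bound $4096$). The eleven listed degrees are simply those for which some admissible pair $(A,G)$ survives; nothing is claimed or verified about whether they occur as monodromy groups of genus-zero covers, so your ``main obstacle'' paragraph (realizability, family-by-family tuple analysis, the delicate $\PSL_2$ cases) is work the theorem never requires. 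Your specific guesses for which actions produce which degrees are plausible but unverified, and in the paper's logic they do not need to be verified at all.
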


\begin{rmk}
We caution the reader that the authors of \cite{FM} overlooked essentially all related results in the literature, including those in the papers they cited, and several of their ``new" results are in fact known.  In order to help readers of \cite{FM} avoid rediscovering known results, 
in Section~\ref{corr} we correct several inaccuracies in \cite{FM} and provide the current state of knowledge on the topics addressed in \cite{FM}.
\end{rmk}

In this paper we have chosen to use elementary
arguments even when shorter arguments were possible
if one used more advanced tools.  We did this
in order to make this paper accessible to
the largest possible audience, since we hope to
entice members of the permutation
polynomial community to study 
permutation rational functions.

This paper is organized as follows.  In the next section we quickly review background material on exceptionality and monodromy groups.  In Section~\ref{sec2} we prove Lemma~\ref{intro2} and Theorem~\ref{intro3}.
In Section~\ref{secadd} we prove a characterization of additive polynomials among all rational functions, and use it to describe the indecomposable exceptional rational functions of degree $8$, $32$, and $128$.  In Section~\ref{sec4} we prove Theorem~\ref{intro4} -- which is the most difficult result in this paper, given what was known previously -- and then we use this result in Section~\ref{seccount} to answer \cite[Problems~9.1 and 9.2]{FM} about the number of degree-$4$ permutation rational functions, and the number of equivalence classes of such functions.  In Section~\ref{sec8} we prove Theorems~\ref{intro8}, \ref{intro32}, and \ref{intro128}, and finally in Section~\ref{secsmall} we prove Theorem~\ref{introsmall}.


\section{Background material}

In this section we recall some basic facts and tools, giving proofs when we cannot find suitable references.


\subsection{Separable rational functions}

\begin{defn}
For any field $K$, a rational function $f(X)\in K(X)$ is \emph{separable} if $f(X)\notin K(X^p)$ where $p:=\charp(K)$.
\end{defn}

Note that constant rational functions are not separable.  We will often use without comment the following equivalent characterizations of separable rational functions:

\begin{lemma}\label{lemsep}
Let $K$ be a field, let $x$ be transcendental over $K$, and pick $f(X)\in K(X)$ of degree $n>0$.  Then $[K(x):K(f(x))]=n$, and
the following are equivalent:
\begin{enumerate}
\item $f(X)$ is separable,
\item $f'(X)\ne 0$,
\item the field extension\/ $K(x)/K(f(x))$ is separable,
\item the numerator of $f(X)-t$ has no multiple roots in the algebraic closure of $K(t)$, where $t$ is transcendental over $K$.
\end{enumerate}
\end{lemma}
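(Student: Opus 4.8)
The plan is to prove the equalities and equivalences in Lemma~\ref{lemsep} by relating algebraic properties of the field extension $K(x)/K(f(x))$ to the derivative $f'(X)$ and to the factorization behavior of the numerator of $f(X)-t$. Let me sketch how I would organize this.

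First I would establish $[K(x):K(f(x))]=n$. Write $f=a/b$ with coprime $a,b\in K[X]$, $\max(\deg a,\deg b)=n$. Then $x$ is a root of the polynomial $\Phi(Y):=a(Y)-f(x)\,b(Y)\in K(f(x))[Y]$, viewing $f(x)$ as a transcendental over $K$. The key point is that $\Phi(Y)$ is irreducible over $K(f(x))$ and has degree exactly $n$ in $Y$: irreducibility follows because $a(Y)-T\,b(Y)$ is linear, hence irreducible, in the variable $T$ over $K[Y]$, so by Gauss's lemma it is irreducible over $K(Y)$ as a polynomial in $T$, and by the symmetry of irreducibility under the transcendence substitution $T\mapsto f(x)$ this forces irreducibility as a polynomial in $Y$ over $K(f(x))$. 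The degree in $Y$ equals $n$ precisely because $\gcd(a,b)=1$ prevents cancellation of the top-degree term. This gives $[K(x):K(f(x))]=n$.

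Next I would prove the chain of equivalences. The implication (1)$\Leftrightarrow$(2) is immediate from the definition: $f'(X)=0$ exactly when $f(X)\in K(X^p)$, since for a rational function $a/b$ the derivative vanishes identically iff both $a$ and $b$ lie in $K[X^p]$ after clearing the coprimality, which is the statement $f\in K(X^p)$. For (2)$\Leftrightarrow$(4) I would compute: the numerator of $f(X)-t$ is $a(X)-t\,b(X)$, and its discriminant (or equivalently its resultant with its $X$-derivative) is a polynomial in $t$ that vanishes identically iff $a(X)-t\,b(X)$ always has a repeated root; a short computation of $\partial_X(a-tb)=a'-tb'$ shows that a common root of $a-tb$ and $a'-tb'$ forces $ab'-a'b$ to vanish there, and $ab'-a'b$ is (up to the factor $b^2$) the numerator of $f'$, so condition (4) fails exactly when $f'=0$, i.e. when (2) fails. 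For (2)$\Leftrightarrow$(3) I would use the generic point $t=f(x)$: the extension $K(x)/K(f(x))$ is separable iff the minimal polynomial $\Phi(Y)=a(Y)-f(x)b(Y)$ has no repeated roots, which by the specialization of (4) at the transcendental $t=f(x)$ is equivalent to (4), and hence to (2).

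The main obstacle, and the step requiring the most care, will be making precise the ``specialization'' argument connecting (3) and (4): condition (4) asserts separability of $a(X)-t\,b(X)$ over the rational function field $K(t)$ with $t$ a fresh transcendental, while condition (3) concerns the specific element $f(x)$, and I must argue that replacing the indeterminate $t$ by the transcendental $f(x)$ preserves the no-repeated-root property. The clean way is to note that $f(x)$ and $t$ are both transcendental over $K$, so there is a $K$-isomorphism $K(t)\xrightarrow{\sim}K(f(x))$ sending $t\mapsto f(x)$, under which $a(Y)-t\,b(Y)$ corresponds to $a(Y)-f(x)\,b(Y)=\Phi(Y)$; separability is preserved under such an isomorphism, so (3) and (4) say the same thing. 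Everything else is routine manipulation of resultants and derivatives.
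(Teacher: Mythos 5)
Your proposal follows essentially the same route as the paper's proof: you form the numerator $G(Y)=a(Y)-t\,b(Y)$, obtain its irreducibility from linearity in $t$ together with Gauss's lemma, identify it (up to a constant) with the minimal polynomial of $x$ over $K(f(x))$ to get $[K(x):K(f(x))]=n$, prove (1)$\Leftrightarrow$(2) by the coprimality argument $ba'=ab'\Rightarrow a'=0=b'$, and connect (3) with (4) by identifying the abstract transcendental $t$ with $f(x)$. (The paper does that last identification silently by defining $t:=f(x)$ at the outset; your explicit $K$-isomorphism $K(t)\cong K(f(x))$ is a legitimate, if anything more careful, way of saying the same thing.) Two of your steps, however, do not stand as written.

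The genuine gap is in your step (2)$\Leftrightarrow$(4). You argue that a common root $\xi\in\overline{K(t)}$ of $a-tb$ and $a'-tb'$ makes $ab'-a'b$ vanish \emph{at $\xi$}, and from this you conclude that ``condition (4) fails exactly when $f'=0$.'' But $ab'-a'b$ lies in $K[X]$, and its vanishing at a single point of $\overline{K(t)}$ does not make it identically zero. You need the further observation that any root $\xi$ of $a-tb$ is transcendental over $K$: if $\xi$ were algebraic over $K$, then $b(\xi)\ne 0$ (otherwise $a(\xi)=0=b(\xi)$ contradicts coprimality), so $t=a(\xi)/b(\xi)$ would be algebraic over $K$, a contradiction; only then does $(ab'-a'b)(\xi)=0$ force $ab'-a'b=0$, i.e.\ $f'=0$. (You also leave the converse implicit; it does follow from your (1)$\Leftrightarrow$(2) step, since $f'=0$ gives $a'=0=b'$, hence $\partial_X(a-tb)=0$, hence every root of $a-tb$ is multiple.) The paper avoids this issue entirely by exploiting irreducibility: a multiple root means $\gcd(G,\partial_X G)\ne 1$ in $K(t)[X]$, and since $G$ is irreducible there, this gcd must be $G$ itself, which by the degree comparison $\deg\partial_X G<\deg G$ forces $\partial_X G=a'-tb'=0$, hence $a'=0=b'$. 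Separately, a misattribution: the degree of $a-tb$ in $Y$ equals $n$ because $t$ is transcendental over $K$ (when $\deg a=\deg b=n$ the top coefficient is $\mathrm{lc}(a)-t\,\mathrm{lc}(b)\ne 0$), not because of coprimality; coprimality is instead what you need for primitivity in the Gauss's lemma step, and to know that $a-tb$ really is the numerator of $f-t$, via $\gcd(a-tb,b)=\gcd(a,b)=1$. Both repairs are routine, so your plan is sound, but these two points must be fixed for the argument to be complete.
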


\begin{proof}
Write $f(X)=a(X)/b(X)$ with coprime $a,b\in K[X]$, and put $t:=f(x)$ and $G(X):=a(X)-tb(X)$.  Then $G(X)\in K[X,t]$ is a degree-$n$ polynomial in $X$ which is not divisible by any nonconstant polynomial in $K[X]$.  Since $G(X)$ is also a degree-$1$ polynomial in $t$, we see that $G(X)$ is irreducible
in $K[X,t]$, so by Gauss's lemma it is irreducible in $K(t)[X]$.  Since $G(x)=0$, this shows that $G(X)$ is a nonzero constant times the minimal polynomial of $x$ over $K(t)$, so since $K(t,x)=K(x)$ it follows that $[K(x):K(f(x))]=n$.
Thus (3) does not hold if and only if $G(X)$ has multiple roots in the algebraic closure $\bar {K(t)}$ of $K(t)$, or equivalently $G(X)$ and $G'(X)$ have common roots in $\bar {K(t)}$, i.e., $\gcd(G(X),G'(X))\ne 1$.  Since $\gcd(G(X),G'(X))$ divides $G(X)$ in $K(t)[X]$, and $G(X)$ is irreducible in $K(t)[X]$, it follows that (3) does not hold if and only if $\gcd(G(X),G'(X))=G(X)$.  Since $\deg(G')<\deg(G)$, the latter condition says that $G'(X)=0$, i.e., $a'(X)=tb'(X)$, which in turn says that $a'(X)=0=b'(X)$.  Writing $p:=\charp(K)$, it follows that (3) does not hold if and only if $a,b\in K[X^p]$, i.e., if and only if $f(X)$ is not separable.  Plainly if $f(X)$ is not separable then $f'(X)=0$.  Conversely, if $f'(X)=0$ then, since $f'(X)=(b(X)a'(X)-a(X)b'(X))/b(X)^2$, it follows that $b(X)a'(X)=a(X)b'(X)$; but then coprimality of $a(X)$ and $b(X)$ implies $a(X)\mid a'(X)$, so that $a'(X)=0$, and likewise $b'(X)=0$, whence $f(X)$ is not separable.
\end{proof}

The definitions immediately imply

\begin{lemma}\label{indecinsep}
If $f(X)\in\F_q(X)$ is indecomposable but not separable then $f=\mu\circ X^p$ where $p:=\charp(\F_q)$ and $\mu(X)\in\F_q(X)$ has degree $1$.
\end{lemma}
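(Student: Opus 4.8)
The plan is to unwind the definition of non-separability and then invoke indecomposability directly, which matches the paper's claim that the result follows ``immediately from the definitions.'' First I would observe that, since $f$ is not separable, by definition $f(X)\in\F_q(X^p)$. The key identification is that $\F_q(X^p)$ is exactly the set of rational functions of the form $h(X^p)$ with $h\in\F_q(X)$: the assignment $Y\mapsto X^p$ defines a field embedding $\F_q(Y)\hookrightarrow\F_q(X)$ whose image is precisely $\F_q(X^p)$. Hence there exists $h\in\F_q(X)$ with $f(X)=h(X^p)$, i.e.\ $f=h\circ X^p$.

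Next I would apply the multiplicativity of degree under composition, namely $\deg f=\deg(h)\cdot\deg(X^p)=p\deg(h)$. Since $f$ is indecomposable, by definition $\deg f\ge 2$, and the inner factor $X^p$ has degree $p\ge 2$. If $\deg h\ge 2$ as well, then $f=h\circ X^p$ would exhibit $f$ as a composition of two rational functions each of degree at least $2$, contradicting indecomposability. Therefore $\deg h=1$, and setting $\mu:=h$ gives $f=\mu\circ X^p$ with $\mu\in\F_q(X)$ of degree one, as required.

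The only point that demands any care — and it is the mildest of obstacles — is justifying the identification of $\F_q(X^p)$ with the image of $\F_q(Y)$ under $Y\mapsto X^p$, so that membership $f\in\F_q(X^p)$ really does yield an honest composition $h\circ X^p$ with $h$ a rational function over $\F_q$ (in particular with coefficients remaining in $\F_q$). Once this is in hand, the conclusion is forced purely by indecomposability together with $p\ge 2$, and no further computation is needed.
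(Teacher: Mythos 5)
Your proof is correct and is exactly the unwinding of the definitions that the paper has in mind when it states the lemma as an immediate consequence (non-separability gives $f=h\circ X^p$ via the identification of $\F_q(X^p)$ with $\{h(X^p):h\in\F_q(X)\}$, and then multiplicativity of degree plus indecomposability forces $\deg h=1$). No gaps; the one case you glossed over, $\deg h=0$, is excluded since it would make $f$ constant, contradicting $\deg f\ge 2$.
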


Every nonconstant $f(X)\in\F_q(X)$ has a unique expression as $f(X)=g(X^{p^r})$ where $p:=\charp(\F_q)$, $r\ge 0$, and $g(X)\in\F_q(X)$ is separable.  Since $X^p$ permutes $\F_{q^\ell}$ for all $\ell$, we see that $f(X)$ is exceptional if and only if $g(X)$ is exceptional.  Thus the study of exceptional rational functions reduces to the separable case.


\subsection{Monodromy groups}

For any field $K$ and any separable $f(X)\in K(X)$ of degree $n>1$, let $x$ be transcendental over $K$, let $\Omega$ be the Galois closure of $K(x)/K(t)$ where $t:=f(x)$, and let $L$ be the algebraic closure of $K$ in $\Omega$.  Then $A:=\Gal(\Omega/K(t))$ and $G:=\Gal(\Omega/L(t))$ are called the
\emph{arithmetic monodromy group} and the  \emph{geometric monodromy group} of $f(X)$, respectively.  Write $S$ for the set of conjugates of $x$ over $K(t)$, and put $A_1:=\Gal(\Omega/K(x))$ and $G_1:=\Gal(\Omega/L(x))$.
We will use the following classical result, in which $\bar K$ denotes an algebraic closure of $K$:

\begin{lemma}\label{basics}
With notation as above, $G$ and $A$ are transitive subgroups of $\Sym(S)$ and $G$ is a normal subgroup of $A$ with $A/G\cong\Gal(L/K)$, where in addition $\abs{S}=n$.
In particular, if $K$ is finite then $A/G$ is cyclic.  Moreover,
\begin{enumerate}
\item $f(X)$ is indecomposable if and only if $A$ is primitive (cf.\ Definition~\emph{\ref{defprim}}).
\item If $K$ is finite then the following are equivalent:
\begin{enumerate}
\item $f(X)$ is exceptional,
\item if $H(X,Y)\in K[X,Y]$ divides the numerator of $f(X)-f(Y)$, and $H(X,Y)$ is irreducible in $\bar K[X,Y]$, then $H(X,Y)$ equals $\alpha(X-Y)$ for some $\alpha\in K^*$,
\item $A_1$ and $G_1$ have exactly one common orbit on $S$.
\end{enumerate}
\end{enumerate}
\end{lemma}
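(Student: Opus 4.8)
The plan is to dispatch the purely Galois-theoretic assertions first and reserve the deep arithmetic input for the exceptionality criterion. Write $f=a/b$ with coprime $a,b\in K[X]$ and set $G(X):=a(X)-tb(X)$, which by the proof of Lemma~\ref{lemsep} is, up to a constant, the minimal polynomial of $x$ over $K(t)$. Since $\Omega$ is the splitting field of $G(X)$ over $K(t)$, both $A$ and $G$ act faithfully on the root set $S$, giving embeddings into $\Sym(S)$; and since $f$ is separable, Lemma~\ref{lemsep} shows $G(X)$ has $n$ distinct roots, so $\abs{S}=n$. Transitivity of $A$ is simply the irreducibility of $G(X)$ over $K(t)$. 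For transitivity of $G$ I would repeat the Gauss's-lemma argument of Lemma~\ref{lemsep} over the larger constant field $L$: coprimality of $a,b$ persists in $L[X]$, so $G(X)=a(X)-tb(X)$ is irreducible in $L[X,t]$ and, being linear in $t$, irreducible in $L(t)[X]$; hence $S$ is a single $G$-orbit.

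Next I would establish $G\trianglelefteq A$ and $A/G\cong\Gal(L/K)$. Every $\sigma\in A$ fixes $K$ and carries elements algebraic over $K$ to such elements, so $\sigma(L)=L$; restriction therefore defines a homomorphism $A\to\Aut(L/K)$ whose kernel is the set of $\sigma$ fixing both $K(t)$ and $L$, i.e.\ fixing $L(t)$, namely $G$. The fixed field in $L$ of the image equals $L\cap\Omega^{A}=L\cap K(t)=K$, so by Artin's lemma $L/K$ is Galois and the image is all of $\Gal(L/K)$; thus $G\trianglelefteq A$ and $A/G\cong\Gal(L/K)$. When $K$ is finite, $L/K$ is an extension of finite fields and its Galois group is cyclic, so $A/G$ is cyclic.

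For (1) I would combine the Galois correspondence with Lüroth's theorem. The stabilizer in $A$ of the point $x\in S$ is $A_1$, so the action of $A$ on $S$ is its coset action on $A/A_1$, and $A$ is transitive with $A_1$ maximal (equivalently $A$ is primitive, cf.\ Definition~\ref{defprim}) exactly when there are no proper intermediate groups. By the fundamental theorem applied to $\Omega/K(t)$, groups $H$ with $A_1\le H\le A$ correspond bijectively to fields $M$ with $K(t)\subseteq M\subseteq K(x)$; by Lüroth each such $M$ equals $K(h(x))$ for some $h\in K(X)$, and then $t=f(x)\in M$ forces $f=g\circ h$ with $g\in K(X)$. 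A proper intermediate field thus corresponds to a decomposition with $\deg g,\deg h>1$, so $A$ is primitive if and only if $f$ is indecomposable, which is (1).

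Finally, for (2) I would prove (b)$\Leftrightarrow$(c) by a dictionary between factorizations and orbits, and invoke the classical counting theorem for (a)$\Leftrightarrow$(c). Writing $N(X,Y):=a(X)b(Y)-a(Y)b(X)$ for the numerator of $f(X)-f(Y)$, the specialization $N(X,y)=b(y)\,G(X)$ has as its $X$-roots exactly the conjugates of $x$ over $K(t)$, that is $S$, among them $x$ itself (the diagonal root $X=Y$). Hence the $K$-irreducible factors of $N$ correspond to the orbits of $A_1=\Stab_A(x)$ on $S$ and the absolutely irreducible factors to the orbits of $G_1=\Stab_G(x)$, with $X-Y$ corresponding to the fixed point $\{x\}$. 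Since $G_1\le A_1$, an absolutely irreducible factor is defined over $K$ exactly when its $G_1$-orbit is $A_1$-invariant, i.e.\ is a common orbit of $A_1$ and $G_1$; so (b) says $\{x\}$ is the only common orbit, which is (c). The remaining equivalence (a)$\Leftrightarrow$(c) is the genuinely deep part and the \emph{main obstacle}: here I would appeal to the classical exceptionality criterion proved via the Lang--Weil estimate, whereby $f$ permutes $\bP^1(\F_{q^\ell})$ for infinitely many $\ell$ if and only if $N(X,Y)/(X-Y)$ has no absolutely irreducible factor defined over $\F_{q^\ell}$ for infinitely many $\ell$, which by the orbit dictionary and a Frobenius argument translates to $A_1$ and $G_1$ having a unique common orbit. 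This is the step carrying all the arithmetic-geometric content, and the one I would either cite or reconstruct in detail.
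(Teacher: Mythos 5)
Your proposal is correct and, where the paper actually gives proofs, follows the same route: the same irreducibility-over-$L(t)$ argument (you re-run the Gauss's-lemma computation over $L$, while the paper instead applies Lemma~\ref{lemsep} with $L$ in place of $K$ --- the same calculation), the restriction-homomorphism/Artin argument that the paper compresses into ``basic Galois theory,'' and an essentially identical L\"uroth-plus-Galois-correspondence proof of item (1). The one real divergence is item (2): the paper proves nothing here, citing Cohen's Theorems 4 and 5 wholesale, whereas you prove the dictionary (b)$\Leftrightarrow$(c) --- $K$-irreducible factors of the numerator of $f(X)-f(Y)$ correspond to $A_1$-orbits via the specialization $Y\mapsto x$, absolutely irreducible ones to $G_1$-orbits, and a factor is $K$-rational and absolutely irreducible exactly when its orbit is common to $A_1$ and $G_1$ --- and you defer only the Lang--Weil-based equivalence (a)$\Leftrightarrow$(b). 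This buys a self-contained reduction of exceptionality to group theory, at the cost of one glossed point: identifying the factorization of $f(X)-t$ over $\bar K(x)$ with the $G_1$-orbits requires knowing that $\Omega/L$ is a regular extension, so that $\Omega$ and $\bar K$ are linearly disjoint over $L$ and restriction gives $\Gal(\Omega\bar K/\bar K(x))\cong G_1$; this is standard, but without it the assertion ``absolutely irreducible factors correspond to $G_1$-orbits'' is a claim rather than a proof. With that point supplied, your treatment of (2) is strictly more detailed than the paper's, and the rest matches it step for step.
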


\begin{defn}\label{defprim}
A subgroup $A$ of $S_n$ is \emph{primitive} if $A$ is transitive and there are no groups strictly between $A$ and the stabilizer of $1$ in $A$.
\end{defn}

\begin{proof}
By Lemma~\ref{lemsep} we have $[L(x):L(t)]=n=[K(x):K(t)]$, so since $f(x)=t$ it follows that the numerator of $f(X)-t$ is irreducible in $L(t)[X]$.  Thus the first two sentences follows from basic Galois theory.
Item (2) is a weaker version of
\cite[Thms.~4 and 5]{Co}.  
Although (1) is known, we do not know a reference proving it in this setting, so we provide a proof.
By the Galois correspondence, $A$ is primitive if and only if there are no fields strictly between $\Omega^A=K(t)$ and $\Omega^{A_1}=K(x)$.  By L\"uroth's theorem \cite[Thm.~2]{Schinzel}, every field between $K(t)$ and $K(x)$ has the form $K(h(x))$ for some $h(X)\in K(X)$, where since $t\in K(h(x))$ we have $t=g(h(x))$ with $g\in K(X)$, so that $f=g\circ h$.  Conversely, if $f=g\circ h$ with $g,h\in K(X)$ then
$[K(x):K(h(x))]=\deg(h)$ and $[K(h(x)):K(t)]=\deg(g)$.  Thus $f(X)$ is decomposable if and only if there is a field strictly between $K(t)$ and $K(x)$.
\end{proof}

\begin{rmk}
The above proof of (1) is an algebraicization of the topological proof in \cite[\S II]{Ritt} of the analogous result over the complex numbers.
\end{rmk}


\subsection{Degree-one rational functions}

We will often use the following results without explicit comment.

\begin{lemma}\label{deg1}
For any field $K$, any degree-one $\mu(X)\in K(X)$ induces a bijective function $\mu\colon\bP^1(K)\to\bP^1(K)$.
\end{lemma}

\begin{proof}
For any $\beta\in K$, the numerator of $\mu(X)-\beta$ is a nonzero polynomial whose degree is $0$ if $\beta=\mu(\infty)$ and $1$ otherwise.  Thus in any case $\beta$ has a unique $\mu$-preimage in $\bP^1(K)$.  Also the unique $\mu$-preimage of $\infty$ is $\infty$ if $\mu(X)$ is a polynomial, and otherwise is the unique root of the denominator of $\mu(X)$.
\end{proof}

\begin{lemma}\label{line}
For any field $K$, any pairwise distinct $\alpha_1,\alpha_2,\alpha_3\in\bP^1(K)$, and any pairwise distinct $\beta_1,\beta_2,\beta_3\in\bP^1(K)$, there is a unique degree-one $\mu(X)\in K(X)$ such that $\mu(\alpha_i)=\beta_i$ for each $i$.
\end{lemma}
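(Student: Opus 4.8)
The plan is to prove the statement in two halves, existence and uniqueness, both resting on the observation that a degree-one $\mu(X)=(aX+b)/(cX+d)$ (necessarily with $ad-bc\ne 0$), its inverse $(dX-b)/(-cX+a)$, and the composition of two such functions are all again degree-one; so the degree-one rational functions form a group under composition, acting on $\bP^1(K)$ via Lemma~\ref{deg1}.

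For existence, I would first reduce to a single normalization: it suffices to produce, for any pairwise distinct $\gamma_1,\gamma_2,\gamma_3\in\bP^1(K)$, a degree-one function sending $(\gamma_1,\gamma_2,\gamma_3)$ to $(0,1,\infty)$. Granting this, if $\lambda$ carries the $\alpha_i$ to $(0,1,\infty)$ and $\rho$ carries the $\beta_i$ to $(0,1,\infty)$, then $\rho^{-1}\circ\lambda$ is degree-one and sends each $\alpha_i$ to $\beta_i$. To build the normalizing map when all $\gamma_i$ are finite, I would exhibit the explicit cross-ratio
\[
\mu(X)=\frac{(X-\gamma_1)(\gamma_2-\gamma_3)}{(X-\gamma_3)(\gamma_2-\gamma_1)},
\]
and check that it is degree-one (the distinctness of the $\gamma_i$ makes the leading coefficients nonzero and the numerator and denominator coprime) and that it sends $\gamma_1,\gamma_2,\gamma_3$ to $0,1,\infty$; when one $\gamma_i=\infty$ I would simply delete the two factors containing that $\gamma_i$ to obtain the corresponding formula.

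For uniqueness, suppose $\mu$ and $\nu$ both carry the $\alpha_i$ to the $\beta_i$. Then $\rho:=\nu^{-1}\circ\mu$ is degree-one and fixes the three distinct points $\alpha_1,\alpha_2,\alpha_3$, and it suffices to show any such $\rho$ equals $X$. Writing $\rho(X)=(aX+b)/(cX+d)$ with $ad-bc\ne 0$, a finite point $X$ is fixed precisely when $cX^2+(d-a)X-b=0$. If $c\ne 0$, then $\rho(\infty)=a/c\ne\infty$, so all three fixed points are finite roots of this quadratic, which is impossible. If $c=0$, then $\rho$ is the affine map $(aX+b)/d$ fixing $\infty$, and its finite fixed points satisfy the linear equation $(a-d)X=-b$, which has at most one solution unless $a=d$ and $b=0$, i.e.\ unless $\rho=X$. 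In either case three distinct fixed points force $\rho=X$, hence $\mu=\nu$.

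The routine obstacle throughout is the bookkeeping around the point $\infty$: every formula and every root-count must be read on $\bP^1(K)$ rather than on $K$, so the case distinctions according to whether a relevant point equals $\infty$ (equivalently, whether a denominator is constant) are exactly where care is needed, though no single step is deep.
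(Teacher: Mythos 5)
Your proof is correct, but it takes a genuinely different route from the paper's. The paper normalizes only one pair of points: it picks degree-one $\rho,\tau$ with $\rho(\infty)=\alpha_1$ and $\tau(\beta_1)=\infty$, reducing to the case $\alpha_1=\beta_1=\infty$, where a degree-one map fixing $\infty$ must be affine, $aX+b$; the statement then becomes the fact that there is a unique non-vertical line through two points of $K\times K$ with distinct first coordinates, which delivers existence and uniqueness in a single stroke. You instead normalize all three points to $(0,1,\infty)$ via the explicit cross-ratio (existence), and prove uniqueness separately by showing that a degree-one map with three distinct fixed points in $\bP^1(K)$ is the identity, by counting roots of $cX^2+(d-a)X-b$. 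Both arguments lean on the same underlying device---composing with degree-one maps and their explicit inverses $(dX-b)/(-cX+a)$---and your use of inverses is not circular, since you verify the inverse formula directly rather than citing Corollary~\ref{muinv}, which the paper derives \emph{from} this lemma. The paper's proof is more economical; yours is longer but makes explicit two reusable facts, the cross-ratio normalization and the bound of two fixed points for a nonidentity degree-one map, the latter of which the paper itself invokes in essentially the same form later (in the alternate proof of Theorem~\ref{intro3}, in Lemma~\ref{2gps}, and in Lemma~\ref{odd4}).
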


\begin{proof}
In case $\alpha_1=\beta_1=\infty$, this follows from the fact that there is a unique line through two points in $K\times K$.  To deduce the general case,
pick degree-one $\rho,\tau\in K(X)$ with 
$\rho(\infty)=\alpha_1$ and $\tau(\beta_1)=\infty$, and note that $\mu(\alpha_i)=\beta_i$ if and only if $\nu :=\tau\circ\mu\circ\rho$ maps
the $\rho$-preimage of $\alpha_i$ to $\tau(\beta_i)$.
\end{proof}

\begin{cor}\label{muinv}
For any field $K$, and any degree-one $\mu(X)\in K(X)$, there is a unique degree-one $\mu^{-1}(X)\in K(X)$ such that $\mu^{-1}\circ\mu=X=\mu\circ\mu^{-1}$.
\end{cor}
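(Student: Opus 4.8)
The plan is to deduce this directly from Lemma~\ref{line}, using the fact that a composition of degree-one rational functions is again degree-one (since degree is multiplicative under composition, so $\deg(g\circ h)=\deg(g)\deg(h)=1$ when both factors have degree one). As a preliminary I would record that $\bP^1(K)$ contains at least three points, since $\abs{\bP^1(K)}=\abs{K}+1\ge 3$ because every field has at least two elements; this guarantees that Lemma~\ref{line} can be applied.

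For existence, I would fix three pairwise distinct points $\alpha_1,\alpha_2,\alpha_3\in\bP^1(K)$ and set $\beta_i:=\mu(\alpha_i)$. By Lemma~\ref{deg1} the map $\mu$ is a bijection of $\bP^1(K)$, so the $\beta_i$ are pairwise distinct. Lemma~\ref{line} then produces a unique degree-one $\nu\in K(X)$ with $\nu(\beta_i)=\alpha_i$ for each $i$. Now $\nu\circ\mu$ is degree-one and satisfies $(\nu\circ\mu)(\alpha_i)=\nu(\beta_i)=\alpha_i$ for $i=1,2,3$; since the identity function $X$ also fixes these three points and is degree-one, the uniqueness clause of Lemma~\ref{line} forces $\nu\circ\mu=X$. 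The symmetric computation $(\mu\circ\nu)(\beta_i)=\mu(\alpha_i)=\beta_i$ yields $\mu\circ\nu=X$ in the same way, so $\nu$ is a two-sided compositional inverse and we may take $\mu^{-1}:=\nu$.

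For uniqueness I would use the standard associativity argument: if $\lambda\in K(X)$ is any degree-one function with $\lambda\circ\mu=X=\mu\circ\lambda$, then $\lambda=\lambda\circ(\mu\circ\nu)=(\lambda\circ\mu)\circ\nu=\nu$. There is no genuine obstacle here beyond bookkeeping; the only points requiring a moment's care are confirming that $\bP^1(K)$ has enough points to invoke Lemma~\ref{line} (which holds over every field) and that the compositions $\nu\circ\mu$ and $\mu\circ\nu$ remain degree-one, so that the uniqueness portion of Lemma~\ref{line} genuinely applies to them.
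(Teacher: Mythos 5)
Your proposal is correct and follows essentially the same route as the paper: both construct the inverse by applying Lemma~\ref{line} to three points and their images under $\mu$ (the paper uses $0,1,\infty$; you use arbitrary distinct $\alpha_1,\alpha_2,\alpha_3$), and both conclude $\nu\circ\mu=X=\mu\circ\nu$ because a degree-one function agreeing with $X$ at three points equals $X$. The only cosmetic difference is the uniqueness step, where you invoke associativity ($\lambda=\lambda\circ(\mu\circ\nu)=(\lambda\circ\mu)\circ\nu=\nu$) while the paper notes that any inverse is forced to satisfy the same three interpolation conditions and so is pinned down by the uniqueness clause of Lemma~\ref{line}.
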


\begin{proof}
If $\mu^{-1}\circ\mu=X$ then $\mu^{-1}$ maps $\mu(\alpha)\mapsto\alpha$ for each $\alpha\in\{0,1,\infty\}$.  Conversely, since $\deg(\mu)=1$, the function $\mu\colon\bP^1(K)\to\bP^1(K)$ is injective, so that $\mu(0)$, $\mu(1)$, and $\mu(\infty)$ are pairwise distinct.  By Lemma~\ref{line}, there is a unique degree-one $\mu^{-1}(X)\in K(X)$ such that $\mu^{-1}(\mu(\alpha))=\alpha$ for each $\alpha\in\{0,1,\infty\}$.  Then $\mu^{-1}\circ\mu$ and $\mu\circ\mu^{-1}$ are degree-one rational functions fixing $0,1,\infty$ and $\mu(0),\mu(1),\mu(\infty)$, respectively, so each of these compositions agrees with $X$ at three points and hence equals $X$.
\end{proof}

\begin{rmk}
Explicitly, if $\mu(X)=(\alpha X+\beta)/(\gamma X+\delta)$ then $\mu^{-1}(X)=(\delta X-\beta)/(-\gamma X+\alpha)$.
\end{rmk}


\subsection{Automorphisms of $K(x)$}

For any field $K$, write $\Gamma_K$ for the set of degree-one rational functions in $K(X)$, and note that $\Gamma_K$ is a group under the operation of functional composition, by 
Corollary~\ref{muinv}.  The following two results are immediate.

\begin{lemma}\label{aut1}
Let $K$ be a field, and let $x$ be transcendental over $K$.
For each $\mu(X)\in \Gamma_K$, let $\sigma_\mu\colon K(x)\to K(x)$ map $f(x)\mapsto f(\mu^{-1}(x))$.
Then $\sigma_\mu$ is in
$\Aut_K (K(x))$, and the map $\mu(X)\mapsto\sigma_\mu$ is an isomorphism $\Gamma_K\to \Aut_K (K(x))$.
\end{lemma}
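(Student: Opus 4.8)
The plan is to verify the claim in four steps: that each $\sigma_\mu$ is a well-defined $K$-endomorphism of $K(x)$, that $\mu\mapsto\sigma_\mu$ respects composition (which simultaneously upgrades each $\sigma_\mu$ to an automorphism and makes the assignment a group homomorphism), that this homomorphism is injective, and finally that it is surjective.

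First I would observe that since $\deg(\mu^{-1})=1$, Lemma~\ref{lemsep} gives $[K(x):K(\mu^{-1}(x))]=1$, so $\mu^{-1}(x)$ is transcendental over $K$ and $K(\mu^{-1}(x))=K(x)$. Hence the substitution $f(x)\mapsto f(\mu^{-1}(x))$ is a well-defined $K$-algebra homomorphism $\sigma_\mu\colon K(x)\to K(x)$ fixing $K$ pointwise, since no denominator arising from $f\in K(X)$ can vanish at the transcendental element $\mu^{-1}(x)$.

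Next I would compute the effect of composition. For $\mu,\nu\in\Gamma_K$ and $f\in K(X)$,
\[
(\sigma_\mu\circ\sigma_\nu)(f(x))=\sigma_\mu\bigl(f(\nu^{-1}(x))\bigr)=f\bigl(\nu^{-1}(\mu^{-1}(x))\bigr)=f\bigl((\mu\circ\nu)^{-1}(x)\bigr)=\sigma_{\mu\circ\nu}(f(x)),
\]
using $\nu^{-1}\circ\mu^{-1}=(\mu\circ\nu)^{-1}$ (Corollary~\ref{muinv}); note it is precisely the $\mu^{-1}$ appearing in the definition of $\sigma_\mu$ that makes this a homomorphism rather than an anti-homomorphism. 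Since $\sigma_X=\mathrm{id}$, taking $\nu=\mu^{-1}$ yields $\sigma_\mu\circ\sigma_{\mu^{-1}}=\sigma_{\mu^{-1}}\circ\sigma_\mu=\mathrm{id}$, so each $\sigma_\mu$ lies in $\Aut_K(K(x))$ with inverse $\sigma_{\mu^{-1}}$, and at the same time $\mu\mapsto\sigma_\mu$ is a group homomorphism $\Gamma_K\to\Aut_K(K(x))$. Injectivity is then immediate: evaluating $\sigma_\mu$ at $x$ recovers $\mu^{-1}(x)$, so $\sigma_\mu=\sigma_\nu$ forces $\mu^{-1}=\nu^{-1}$ and hence $\mu=\nu$.

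The main step is surjectivity. Given $\tau\in\Aut_K(K(x))$, write $\tau(x)=\theta(x)$ for the unique $\theta\in K(X)$. Since $\tau$ is a $K$-automorphism, $K(\theta(x))=\tau(K(x))=K(x)$, so by Lemma~\ref{lemsep} we get $\deg(\theta)=[K(x):K(\theta(x))]=1$, i.e.\ $\theta\in\Gamma_K$. Setting $\mu:=\theta^{-1}$ gives $\sigma_\mu(x)=\mu^{-1}(x)=\theta(x)=\tau(x)$, and since any $K$-endomorphism of $K(x)$ is determined by the image of the generator $x$, we conclude $\tau=\sigma_\mu$. The crux of the argument is thus the observation that an automorphism must send $x$ to a degree-one rational function, which is exactly where the degree formula of Lemma~\ref{lemsep} does the real work; everything else reduces to routine substitution computations.
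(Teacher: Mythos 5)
Your proof is correct. There is nothing in the paper to compare it against: the authors state this lemma (together with Lemma~\ref{aut2}) with the single remark that both results are ``immediate,'' and give no argument. Your writeup is the natural filling-in of that claim, and it correctly leans on Lemma~\ref{lemsep} at the two places where something is actually being used—well-definedness of the substitution and, crucially, surjectivity, where the degree formula $[K(x):K(\theta(x))]=\deg(\theta)$ forces an automorphism to send $x$ to a degree-one function. The only point left tacit is that $\theta$ is nonconstant before invoking that formula, which is immediate since $K(\theta(x))=K(x)\neq K$.
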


In light of the above result, we will identify $\Aut_K (K(x))$ with $\Gamma_K$ whenever convenient.  We sometimes identify $\Gamma_K$ with $\PGL_2(K)$ via the following result.

\begin{lemma}\label{aut2}
For any field $K$, the map \[\begin{pmatrix}\alpha & \beta\\\gamma & \delta \end{pmatrix}\mapsto \frac{\alpha X+\beta}{\gamma X+\delta}\] induces a surjective homomorphism $\phi\colon \GL_2(K)\twoheadrightarrow \Gamma_K$ whose kernel consists of the constant multiples of the identity matrix, so that $\phi$ induces an isomorphism $\PGL_2(K)\to \Gamma_K$.
\end{lemma}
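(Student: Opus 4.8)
The plan is to verify directly that $\phi$ is a well-defined group homomorphism, compute its kernel, and then invoke the first isomorphism theorem; the identification of $\PGL_2(K)$ with the quotient of $\GL_2(K)$ by its scalar matrices then yields the asserted isomorphism. First I would check that $\phi$ really maps into $\Gamma_K$. Given $M=\begin{pmatrix}\alpha & \beta\\\gamma & \delta\end{pmatrix}\in\GL_2(K)$, the numerator $\alpha X+\beta$ and denominator $\gamma X+\delta$ are coprime in $K[X]$: a common factor of two polynomials of degree at most one would force $\alpha X+\beta$ and $\gamma X+\delta$ to be proportional, i.e.\ $\alpha\delta-\beta\gamma=0$, contradicting $\det M\ne 0$. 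Since $\det M\ne 0$ also rules out both polynomials being constant, we have $\max(\deg(\alpha X+\beta),\deg(\gamma X+\delta))=1$, so $\phi(M)$ is indeed a degree-one rational function.

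Next, the homomorphism property reduces to a single computation: substituting $\phi(N)(X)$ into $\phi(M)$ and clearing the inner denominator shows that the coefficients of the resulting degree-one function are exactly the entries of the matrix product $MN$, so $\phi(M)\circ\phi(N)=\phi(MN)$; here the group operation on $\Gamma_K$ is functional composition, as recorded after Corollary~\ref{muinv}. Surjectivity is then immediate: an arbitrary degree-one $f(X)=a(X)/b(X)$ with coprime $a,b\in K[X]$ and $\max(\deg a,\deg b)=1$ can be written with $a=\alpha X+\beta$ and $b=\gamma X+\delta$, and coprimality forces $\alpha\delta-\beta\gamma\ne 0$ by the same proportionality observation, so $f=\phi(M)$ for the corresponding $M\in\GL_2(K)$.

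For the kernel, $\phi(M)=X$ means $(\alpha X+\beta)/(\gamma X+\delta)=X$, i.e.\ $\alpha X+\beta=\gamma X^2+\delta X$ in $K[X]$; comparing coefficients forces $\gamma=0$, $\beta=0$, and $\alpha=\delta$, so $M=\alpha I$ with $\alpha\in K^*$. Hence $\ker\phi$ consists precisely of the nonzero scalar matrices. The first isomorphism theorem then gives an isomorphism from the quotient $\GL_2(K)/\ker\phi$ onto $\Gamma_K$, and since $\PGL_2(K)$ is by definition this quotient, $\phi$ induces an isomorphism $\PGL_2(K)\to\Gamma_K$.

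The only point requiring any real care is the well-definedness/surjectivity interface, namely that coprimality of the numerator and denominator is equivalent to the nonvanishing of the determinant; this is the single proportionality observation used in both directions, and the remaining verifications are entirely routine, consistent with the statement being flagged as immediate.
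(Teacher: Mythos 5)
Your proof is correct in all details: the determinant/coprimality equivalence, the matrix-product computation for $\phi(M)\circ\phi(N)=\phi(MN)$, the kernel computation, and the appeal to the first isomorphism theorem are exactly the routine verifications this statement requires. The paper itself offers no proof at all—it declares Lemma~\ref{aut2} (together with Lemma~\ref{aut1}) ``immediate''—so your write-up simply makes explicit the standard argument the authors had in mind, and it does so without gaps.
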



\subsection{Fixed fields and Galois closures}

\begin{lemma}\label{aut}
Let $K$ be a field, let $x$ be transcendental over $K$, and let $G$ be a finite subgroup of $\Aut_K (K(x))$.  If $f(X)\in K(X)$ has degree $\abs{G}$ and $G$ fixes $f(x)$ then the fixed field $K(x)^G$ equals $K(f(x))$.
\end{lemma}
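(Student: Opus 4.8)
The plan is to prove the statement by a degree count applied to the tower of fields $K(f(x))\subseteq K(x)^G\subseteq K(x)$, showing that the first inclusion is forced to be an equality. The whole argument reduces to computing the two outer degrees and comparing.

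First I would record the inclusions. Since $G$ fixes $f(x)$, we have $f(x)\in K(x)^G$, and therefore $K(f(x))\subseteq K(x)^G$; the inclusion $K(x)^G\subseteq K(x)$ is automatic. This yields the tower above, and multiplicativity of field degrees gives
\[
[K(x):K(f(x))]=[K(x):K(x)^G]\cdot[K(x)^G:K(f(x))].
\]
It now suffices to show that the two ends of this chain have the same degree over the relevant base, i.e.\ that $[K(x):K(f(x))]=[K(x):K(x)^G]$.

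Next I would compute these two degrees separately. By Artin's theorem on fixed fields, since $G$ is a finite subgroup of $\Aut_K(K(x))$, the extension $K(x)/K(x)^G$ is Galois with group $G$, so $[K(x):K(x)^G]=\abs{G}$. On the other hand, the first sentence of Lemma~\ref{lemsep} applies to any nonconstant $f(X)\in K(X)$ and gives $[K(x):K(f(x))]=\deg(f)$, which equals $\abs{G}$ by hypothesis. Substituting both values into the displayed tower equation forces $[K(x)^G:K(f(x))]=1$, and hence $K(x)^G=K(f(x))$, as desired.

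There is no genuine obstacle in this argument, as it is purely a dimension count; the only points needing mild care are to invoke the right tools. Specifically, I want the form of Lemma~\ref{lemsep} that computes $[K(x):K(f(x))]=\deg f$ for \emph{every} nonconstant $f$, with no separability hypothesis (this is exactly the first assertion of that lemma, proved there via irreducibility of $a(X)-tb(X)$), and I want to be sure Artin's theorem is applicable, which it is precisely because $G$ is a \emph{finite} group of automorphisms. Once these two equalities of degrees are in hand, the conclusion is immediate.
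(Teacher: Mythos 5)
Your proof is correct and follows essentially the same route as the paper: the paper's proof is exactly the degree count $[K(x):K(x)^G]=\abs{G}=\deg(f)=[K(x):K(f(x))]$ (Artin's theorem plus the first assertion of Lemma~\ref{lemsep}), combined with the containment $K(f(x))\subseteq K(x)^G$ to force equality. Your write-up just makes explicit the tower multiplicativity and the applicability checks that the paper leaves implicit.
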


\begin{proof}
By basic Galois theory and Lemma~\ref{lemsep} we have
\[
[K(x): K(x)^G]=\abs{G}=\deg(f)=[K(x):K(f(x))].
\]
Since $K(x)^G\supseteq K(f(x))$, it follows that $K(x)^G=K(f(x))$.
\end{proof}

\begin{lemma}\label{PGL}
Let $K$ be a field, let $x$ be transcendental over $K$,
and let $f(X)\in K(X)$ be a separable rational function of degree $n$.  Let $\Omega$ be the Galois closure of $K(x)/K(f(x))$, and let $L$ be the algebraic closure of $K$ in $\Omega$.
If the geometric monodromy group $G$ of $f(X)$ has order $n$ then $\Omega=L(x)$.
\end{lemma}

\begin{proof}
Writing $t:=f(x)$, we have
\[
[\Omega:L(t)]=\abs{G}=n=[L(x):L(t)],
\]
which implies the result since $\Omega\supseteq L(x)$.
\end{proof}


\subsection{Branch points}

\begin{defn}\label{bpdef}
For any algebraically closed field $K$ and any nonconstant $f(X)\in K(X)$ of degree $n$, a \emph{branch point} of $f(X)$ is an element $\beta\in\bP^1(K)$ which has fewer than $n$ distinct $f$-preimages in\/ $\bP^1(K)$.
\end{defn}

\begin{lemma}\label{bpfinite}
If $\beta\in K$ is a branch point of $f(X)$, then either $\beta=f(\infty)$ or $\beta=f(\gamma)$ for some $\gamma\in K$ such that $f'(\gamma)=0$.  Any separable rational function has only finitely many branch points.
\end{lemma}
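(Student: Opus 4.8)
The plan is to fix a representation $f(X)=a(X)/b(X)$ with coprime $a,b\in K[X]$, where $K$ is algebraically closed and $n=\max(\deg a,\deg b)$, and to relate branch points to repeated roots of the pencil $g_\beta(X):=a(X)-\beta b(X)$. First I would record how the preimages of a finite $\beta$ are distributed: the finite $f$-preimages of $\beta$ are exactly the roots of $g_\beta$ in $K$, since coprimality of $a,b$ forces $b(\gamma)\ne 0$ at any root $\gamma$ of $g_\beta$ (otherwise $a(\gamma)=0$ too), so these really are points where $f=\beta$; meanwhile $\infty$ is an $f$-preimage of $\beta$ if and only if $\beta=f(\infty)$.

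For the first assertion I would assume $\beta\ne f(\infty)$, so that $\infty$ is not a preimage and every preimage is a root of $g_\beta$. A short case analysis on whether $\deg a$ is greater than, less than, or equal to $\deg b$ shows that $\deg g_\beta=n$, precisely because $\beta\ne f(\infty)$ prevents the leading coefficient of $g_\beta$ from vanishing. Hence $\beta$ has exactly $n$ distinct preimages unless $g_\beta$ has a repeated root, so if $\beta$ is a branch point then $g_\beta$ has a repeated root $\gamma\in K$, meaning $g_\beta(\gamma)=g_\beta'(\gamma)=0$. Then I would use $a(\gamma)=\beta b(\gamma)$ and $a'(\gamma)=\beta b'(\gamma)$ together with $b(\gamma)\ne 0$ (again from coprimality) to compute, via $f'=(a'b-ab')/b^2$, that $f'(\gamma)=0$ and $f(\gamma)=\beta$, yielding the desired dichotomy.

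For the finiteness statement I would observe that the preceding paragraph confines the branch points lying in $K$ to the set $\{f(\infty)\}\cup f(\{\gamma\in K:f'(\gamma)=0\})$, and that adjoining the single point $\infty$ accounts for the only branch point possibly outside $K$. Since $f$ is separable, Lemma~\ref{lemsep} gives $f'(X)\ne 0$, so the numerator of $f'$ is a nonzero polynomial with only finitely many roots; thus $f'$ has only finitely many zeros in $K$, and the whole set of branch points is finite.

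The main obstacle, such as it is, lies in the bookkeeping at $\infty$: one must track how $\deg g_\beta$ and the status of $\infty$ as a preimage depend on the comparison between $\deg a$ and $\deg b$, and verify in each case that $\beta\ne f(\infty)$ is exactly the condition forcing $\deg g_\beta=n$. The only genuinely delicate point is the repeated appeal to $\gcd(a,b)=1$ to guarantee $b(\gamma)\ne 0$, which is what makes the value $f(\gamma)$ and the derivative $f'(\gamma)$ legitimate to evaluate; separability enters only at the very end, and indeed is indispensable, since an inseparable $f$ such as $X^p$ has $f'\equiv 0$ and every point of $\bP^1(K)$ as a branch point.
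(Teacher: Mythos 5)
Your proposal is correct and follows essentially the same route as the paper: write $f=a/b$ with $a,b$ coprime, observe that a branch point $\beta\ne f(\infty)$ forces the degree-$n$ polynomial $a(X)-\beta b(X)$ to have a multiple root $\gamma$, use coprimality to get $b(\gamma)\ne 0$, and then the quotient rule gives $f'(\gamma)=0$ with $f(\gamma)=\beta$, while separability makes the numerator of $f'$ a nonzero polynomial with finitely many roots. The only difference is that you spell out the degree bookkeeping at $\infty$ and the preimage characterization more explicitly than the paper does.
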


\begin{proof}
Write $f(X)=a(X)/b(X)$ where $a,b\in K[X]$ are coprime.
If $\beta\in K\setminus\{f(\infty)\}$ is a branch point of $f(X)$ then $c(X):=a(X)-\beta b(X)$ is
a degree-$n$ polynomial having fewer than $n$ distinct roots, so $c(X)$ has a multiple root $\gamma$.  Thus $c'(\gamma)=0$, so that $a'(\gamma)=\beta b'(\gamma)$, whence
\begin{align*}
f'(\gamma) &= \frac{b(\gamma)a'(\gamma)-a(\gamma)b'(\gamma)}{b(\gamma)^2} = \frac{b'(\gamma)\cdot(b(\gamma)\beta - a(\gamma))}{b(\gamma)^2} \\ &= -\frac{b'(\gamma)\cdot (\beta-f(\gamma))}{b(\gamma)} = 0,
\end{align*}
where we note that $b(\gamma)\ne 0$ since $a(\gamma)/b(\gamma)=\beta\ne\infty$.
If $f(X)$ is separable then $f'(X)\ne 0$, so the numerator of $f'(X)$ is a nonzero polynomial and hence has only finitely many roots.
\end{proof}

\begin{lemma}\label{bprat}
If $f(X)\in\F_q(X)\setminus\F_q$ then the $q$-th power map permutes the branch points of $f(X)$, and also permutes the $f$-preimages of any element of\/ $\bP^1(\F_q)$.
\end{lemma}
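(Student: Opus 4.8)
The plan is to exploit the fact that $f(X)\in\F_q(X)$ has coefficients fixed by the $q$-th power Frobenius, so the Frobenius map $\phi\colon\alpha\mapsto\alpha^q$ on $\bar\F_q$ is compatible with the geometry of $f$. The key structural observation is that for any $\alpha\in\bP^1(\bar\F_q)$, the multiplicity of $\alpha$ as an $f$-preimage of $f(\alpha)$ is preserved under applying Frobenius. Concretely, I would first show that $f(\alpha^q)=f(\alpha)^q$ for all $\alpha\in\bP^1(\bar\F_q)$: writing $f=a/b$ with $a,b\in\F_q[X]$ coprime, since the coefficients of $a$ and $b$ lie in $\F_q$ we have $a(\alpha^q)=a(\alpha)^q$ and $b(\alpha^q)=b(\alpha)^q$, whence $f(\alpha^q)=f(\alpha)^q$ (handling $\infty$ separately, noting $\infty^q=\infty$ and that $f$ being defined over $\F_q$ commutes with Frobenius at $\infty$ too).

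Next I would translate this into a statement about multiplicities. The multiplicity with which $\alpha$ occurs as a root of the numerator of $f(X)-\beta$ equals the order of vanishing of $f(X)-\beta$ at $X=\alpha$ (for $\beta=f(\alpha)\ne\infty$), and similarly for the pole structure when $\beta=\infty$. Because raising coefficients to the $q$-th power and substituting $X\mapsto X^q$ is a field automorphism of $\bar\F_q(X)$ that fixes $f$ (as $f$ has coefficients in $\F_q$), this automorphism carries the factorization of $f(X)-\beta$ to the factorization of $f(X)-\beta^q$, sending the factor $(X-\alpha)$ to $(X-\alpha^q)$ with the same exponent. Hence $\alpha$ is an $f$-preimage of $\beta$ of multiplicity $m$ if and only if $\alpha^q$ is an $f$-preimage of $\beta^q$ of the same multiplicity $m$.

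From this multiplicity-preservation, both assertions follow. For the branch points: $\beta\in\bP^1(\bar\F_q)$ is a branch point precisely when some preimage has multiplicity at least $2$, equivalently when $\beta$ has fewer than $n$ distinct preimages; by the previous paragraph $\beta$ has fewer than $n$ distinct preimages if and only if $\beta^q$ does, so $\phi$ maps the finite set of branch points (finite by Lemma~\ref{bpfinite}) into itself, and being an injection on a finite set it permutes them. For the preimages of a point $\gamma\in\bP^1(\F_q)$: since $\gamma^q=\gamma$, the multiplicity statement shows $\alpha$ is an $f$-preimage of $\gamma$ if and only if $\alpha^q$ is, so $\phi$ permutes the $f$-preimages of $\gamma$ (again a finite set, so the injection is a permutation).

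The main obstacle, such as it is, is purely bookkeeping: making the claim $f(\alpha^q)=f(\alpha)^q$ precise and uniform across the affine and the infinite cases, and carefully justifying that the coefficient-Frobenius substitution is an automorphism fixing $f$ so that it genuinely preserves root multiplicities rather than merely permuting the root \emph{sets}. Since the paper's (commented-out) Definition of $f^{(q)}$ records exactly this automorphism and the statement that $f^{(q)}=f$ iff $f\in\F_q(X)$, I would lean on that framing: the clean way to phrase everything is that $f^{(q)}=f$ here, and the automorphism $\theta^{-1}\circ\rho$ intertwines the preimage-with-multiplicity data of $f$ at $\beta$ with that at $\beta^q$. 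No deep input is needed beyond Lemma~\ref{bpfinite} for finiteness of the branch locus.
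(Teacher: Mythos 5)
Your proposal is correct and is essentially the paper's own proof: the paper's entire argument is the single observation that the multiplicity of $\alpha$ as an $f$-preimage of $f(\alpha)$ equals the multiplicity of $\alpha^q$ as an $f$-preimage of $f(\alpha^q)=f(\alpha)^q$, which is exactly your multiplicity-preservation step, elaborated. One small caveat: your appeal to Lemma~\ref{bpfinite} for finiteness of the branch locus requires $f$ to be separable, which the statement does not assume (for inseparable $f$ every point of $\bP^1(\bar\F_q)$ is a branch point, so the branch locus is infinite); however, your ``if and only if'' combined with the fact that $\alpha\mapsto\alpha^q$ is a bijection of $\bP^1(\bar\F_q)$ (since $\bar\F_q$ is perfect) yields the permutation claim without any finiteness, so this is a harmless slip rather than a real gap.
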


\begin{proof}
This holds because the multiplicity of $\alpha$ as an $f$-preimage of $f(\alpha)$ equals the multiplicity of $\alpha^q$ as an $f$-preimage of $f(\alpha^q)=f(\alpha)^q$. 
\end{proof}

\section{Degree at most $3$}\label{sec2}

In this section we give quick elementary proofs of Lemma~\ref{intro2} and Theorem~\ref{intro3}.  We give two proofs of the latter result, one being a very short proof using basic Galois theory and the other a slightly longer proof using essentially nothing.

\begin{proof}[Proof of Lemma~\ref{intro2}]
Lemma~\ref{deg1} shows that degree-one
rational functions are bijective.
If $q$ is even then clearly $X^2$ permutes $\bP^1(\F_q)$, so that also any function equivalent to $X^2$ is a permutation rational function.  Conversely, we now suppose that
$f(X)\in\F_q(X)$ is a degree-$2$ permutation rational function.  Write $\alpha:=f(0)$ and $\beta:=f(\infty)$, so that $\alpha,\beta\in\bP^1(\F_q)$, and $\alpha\ne\beta$ by the permutation hypothesis.  Letting $\mu(X)\in\F_q(X)$ be a degree-one rational function which maps $\alpha$ and $\beta$ to $0$ and $\infty$, respectively, it follows that $\widehat f(X):=\mu(f(X))$ permutes $\bP^1(\F_q)$ and fixes $0$ and $\infty$.  Thus $\widehat f(X)=(X^2+\gamma X)/b(X)$ for some $\gamma\in\F_q$ and some nonzero $b(X)\in\F_q[X]$ of degree at most $1$.  By the permutation condition, the only zero of $\widehat f(X)$ in $\F_q$ is $0$, so we must have $\gamma=0$.  Likewise the only pole of $\widehat f(X)$ in $\bP^1(\F_q)$ is $\infty$, so $b(X)$ has no roots in $\F_q$ and thus $b(X)$ is constant (since $\deg(b)\le 1$).
Hence $\widehat f(X)=\delta X^2$ for some $\delta\in\F_q^*$, so $f(X)$ is equivalent to $X^2$.  Since $\widehat f(1)=\widehat f(-1)$, the permutation condition implies $1=-1$ so $q$ is even.
\end{proof}




Theorem~\ref{intro3} asserts that all degree-$3$ permutation rational functions are equivalent to $X^3$ or an additive polynomial or a member of the following class of rational functions which is not as widely known:

\begin{defn}
A \emph{R\'edei function} is a rational function $f(X)\in\F_q(X)$ for which there exist two points in $\F_{q^2}\setminus\F_q$ which each have a unique $f$-preimage in $\bar\F_q$.
\end{defn}

These were introduced in a non-conceptual way in \cite{R}.  The
following result from \cite{DZ} describes basic properties of R\'edei functions. Here $\Lambda$ is the group of $(q+1)$-th roots of unity in $\F_{q^2}$.

\begin{lemma}\label{ok}
A rational function $f(X)\in\F_q(X)$ of degree $n>0$ is a R\'edei function if and only if $f=\mu\circ X^n\circ\nu$ for some degree-one $\mu,\nu\in\F_{q^2}(X)$ such that $\mu(\Lambda)=\bP^1(\F_q)$ and $\nu(\bP^1(\F_q))=\Lambda$.  If $f(X)\in\F_q(X)$ is a R\'edei function of degree $n>0$, then the following are equivalent:
\begin{enumerate}
\item $f(X)$ permutes\/ $\bP^1(\F_q)$,
\item $f(X)$ is exceptional,
\item $(n,q+1)=1$.
\end{enumerate}
A degree-$n$ R\'edei function is indecomposable if and only if $n$ is prime.  For any $n$ and $q$, there is a unique equivalence class of
degree-$n$ R\'edei functions in\/ $\F_q(X)$, and
it includes $\nu^{-1}\circ X^n\circ\nu$ where $\nu(X)=(X-\delta^q)/(X-\delta)$ for any $\delta\in\F_{q^2}\setminus\F_q$.
\end{lemma}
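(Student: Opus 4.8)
The plan is to fix once and for all the two commuting structures that govern everything: the $q$-power Frobenius $\phi\colon x\mapsto x^q$, which restricts to an involution of $\bP^1(\F_{q^2})$ with $\Fix(\phi)=\bP^1(\F_q)$, and the twisted map $\psi\colon x\mapsto x^{-q}$, also an involution of $\bP^1(\F_{q^2})$, with $\Fix(\psi)=\Lambda$. I would repeatedly use two elementary facts: a rational function $g$ lies in $\F_q(X)$ iff $g\circ\phi=\phi\circ g$ (the criterion $g^{(q)}=g$), and $X^n$ commutes with $\psi$ since $(x^{-q})^n=(x^n)^{-q}$. A direct computation with the explicit $\nu(X)=(X-\delta^q)/(X-\delta)$ shows $\nu(\alpha^q)=\nu(\alpha)^{-q}$, i.e.\ $\nu\circ\phi=\psi\circ\nu$; this simultaneously reproves $\nu(\bP^1(\F_q))=\Lambda$ and yields the conjugacy relation I need. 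More generally I would show that the set conditions $\mu(\Lambda)=\bP^1(\F_q)$ and $\nu(\bP^1(\F_q))=\Lambda$ are \emph{equivalent} to the conjugacy relations $\mu\circ\psi=\phi\circ\mu$ and $\nu\circ\phi=\psi\circ\nu$; the forward implication rests on the observation that a conjugate $\rho\circ\phi\circ\rho^{-1}$ of Frobenius by a degree-one $\rho\in\F_{q^2}(X)$ is determined by its $(q+1)$-point fixed set, because the product of two such maps sharing a fixed set is an honest degree-one map fixing at least three points, hence the identity.

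With this dictionary in place, the characterization in the first sentence is the heart of the matter. For the ``if'' direction I would combine the three relations $\nu\circ\phi=\psi\circ\nu$, $X^n\circ\psi=\psi\circ X^n$, and $\mu\circ\psi=\phi\circ\mu$ to telescope $f\circ\phi=\phi\circ f$ for $f=\mu\circ X^n\circ\nu$, proving $f\in\F_q(X)$; and since $0,\infty\notin\Lambda$, the two distinct points $\mu(0),\mu(\infty)$ lie in $\F_{q^2}\setminus\F_q$ and each has the single $f$-preimage $\nu^{-1}(0)$ resp.\ $\nu^{-1}(\infty)$, so $f$ is a R\'edei function. For the ``only if'' direction I would first reduce to the separable case by writing $f=g(X^{p^r})$ with $g$ separable (the purely inseparable $X^{p^r}$ is a bijection of $\bP^1(\bar\F_q)$ preserving $\F_{q^2}\setminus\F_q$, so it does not change which points have a unique preimage). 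Then a standard Riemann--Hurwitz count shows a separable degree-$n\ge 2$ map has at most two totally ramified points; since Lemma~\ref{bprat} forces the $q$-power map to permute these, the two special points must be $\beta,\beta^q$ with unique preimages $\gamma,\gamma^q$, all in $\F_{q^2}\setminus\F_q$. Choosing $\nu$ with $\nu(\bP^1(\F_q))=\Lambda$ and $\nu(\gamma)=0$ (which forces $\nu(\gamma^q)=\infty$ via $\nu\circ\phi=\psi\circ\nu$) and $\mu$ with $\mu(\Lambda)=\bP^1(\F_q)$ and $\mu(0)=\beta$ (forcing $\mu(\infty)=\beta^q$), total ramification over $0$ and $\infty$ forces $\mu^{-1}\circ f\circ\nu^{-1}=cX^n$ for some $c\in\F_{q^2}^*$; the identity $f\circ\phi=\phi\circ f$ then pins down $c^{q+1}=1$, i.e.\ $c\in\Lambda$, so $c$ can be absorbed into $\mu$ without disturbing $\mu(\Lambda)=\bP^1(\F_q)$.

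For the remaining assertions I would argue as follows. The permutation/exceptionality equivalence: conjugating by $\nu$ and $\mu$ and using $\nu\circ\phi^\ell\circ\nu^{-1}=\psi^\ell$ and $\mu\circ\psi^\ell\circ\mu^{-1}=\phi^\ell$ shows that $f$ permutes $\bP^1(\F_{q^\ell})=\Fix(\phi^\ell)$ iff $X^n$ permutes $\Fix(\psi^\ell)$, which for odd $\ell$ is the group $\Lambda_\ell$ of $(q^\ell+1)$-th roots of unity, so the condition becomes $(n,q^\ell+1)=1$. Taking $\ell=1$ gives (1)$\iff$(3); the implication (3)$\Rightarrow$(2) follows since $(n,q+1)=1$ yields $(n,q^\ell+1)=1$ for a positive-density set of odd $\ell$ (those $\ell\equiv 1$ modulo the $\lcm$ of the orders of $q$ modulo the prime divisors of $n$ other than $p$, noting $p\nmid q^\ell+1$ automatically), of which there are infinitely many, while (2)$\Rightarrow$(1) is the general fact recorded after Definition~\ref{defexc}. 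For indecomposability: a prime-degree function is automatically indecomposable since degree is multiplicative, while if $n=ab$ with $a,b\ge 2$ I would insert a degree-one $\tilde\mu$ with $\tilde\mu(\Lambda)=\bP^1(\F_q)$ and write $f=(\mu\circ X^a\circ\tilde\mu^{-1})\circ(\tilde\mu\circ X^b\circ\nu)$, both factors being of R\'edei form and hence in $\F_q(X)$ by the first part, so $f$ is decomposable over $\F_q$. Finally, $\nu^{-1}\circ X^n\circ\nu$ is of the stated form (take $\mu=\nu^{-1}$), so it is R\'edei; and given two R\'edei functions $f_i=\mu_i\circ X^n\circ\nu_i$, the degree-one maps $\mu_2\circ\mu_1^{-1}$ and $\nu_1^{-1}\circ\nu_2$ conjugate $f_1$ to $f_2$ and each carries $\bP^1(\F_q)$ to itself, so by the uniqueness in Lemma~\ref{line} each lies in $\F_q(X)$, giving the desired $\F_q$-equivalence and the uniqueness of the class.

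The steps I expect to be the main obstacles are the ``only if'' direction of the characterization and the descent in the uniqueness statement. The former requires the Riemann--Hurwitz bound on totally ramified points, the reduction to the separable case, and especially the delicate verification that the leftover scalar $c$ satisfies $c\in\Lambda$; the latter hinges on the fact that a degree-one $\F_{q^2}$-map preserving $\bP^1(\F_q)$ is automatically defined over $\F_q$. Both ultimately rest on the clean interaction between degree-one $\F_{q^2}$-maps and the Frobenius pair $(\phi,\psi)$, which is why I would set up that dictionary at the very start.
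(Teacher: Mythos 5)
The paper never proves this lemma: it is quoted verbatim from the reference \cite{DZ} (``R\'edei functions,'' listed as in preparation), so there is no in-paper proof to compare yours against. Judged on its own, your argument is correct in all essentials, and it stays close to the paper's toolkit: your descent trick (a degree-one map over $\F_{q^2}$ carrying three points of $\bP^1(\F_q)$ into $\bP^1(\F_q)$ must lie in $\F_q(X)$, by the uniqueness in Lemma~\ref{line}) and your explicit computation with $\nu(X)=(X-\delta^q)/(X-\delta)$ are exactly the moves made in the paper's alternate proof of Theorem~\ref{intro3}, which is in effect the case $n=3$ of this lemma. Your dictionary identifying the set conditions $\mu(\Lambda)=\bP^1(\F_q)$, $\nu(\bP^1(\F_q))=\Lambda$ with the conjugacy relations $\mu\circ\psi=\phi\circ\mu$, $\nu\circ\phi=\psi\circ\nu$ is sound, and each of the lemma's four assertions (normal form, the three-way equivalence, indecomposability, uniqueness of the equivalence class) is correctly reduced to it; in particular the scalar computation $cX^n\circ\psi=\psi\circ cX^n \iff c^{q+1}=1$ and the factorization $f=(\mu\circ X^a\circ\tilde\mu^{-1})\circ(\tilde\mu\circ X^b\circ\nu)$ both check out.

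Two steps need more care than you give them, though both repairs are routine. First, in your uniqueness claim for conjugates of Frobenius: as written, the ``product'' $\sigma_1\circ\sigma_2$ of two conjugates $\sigma_i=\rho_i\circ\phi\circ\rho_i^{-1}$ is a degree-$q^2$ map, not an honest degree-one map. You should instead take $\sigma_1\circ\sigma_2^{-1}=\rho_1\circ(\rho_1^{-1}\circ\rho_2)^{(q)}\circ\rho_2^{-1}$ (using $\phi\circ h\circ\phi^{-1}=h^{(q)}$, where $h^{(q)}$ raises all coefficients to the $q$-th power), which is genuinely of degree one, fixes the common $(q+1)$-point fixed set pointwise, and is therefore the identity. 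Second, your inseparable reduction only argues that $f=g(X^{p^r})$ is R\'edei iff its separable part $g$ is; you still must convert a normal form $g=\mu\circ X^m\circ\nu$ into one for $f$. This requires the Frobenius-twist identity $\nu\circ X^{p^r}=X^{p^r}\circ\nu'$, where $\nu'$ is obtained from $\nu$ by taking $p^r$-th roots of all coefficients (these stay in $\F_{q^2}$ since Frobenius is an automorphism of $\F_{q^2}$), together with the check that $\nu'$ still maps $\bP^1(\F_q)$ onto $\Lambda$: for $x\in\bP^1(\F_q)$ one has $\nu'(x)^{p^r}=\nu(x^{p^r})\in\Lambda$, so $\nu'(x)^{q+1}$ is a $p^r$-th root of unity, hence equals $1$. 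With these two points patched, the proof is complete, including the Riemann--Hurwitz bound capping the number of totally ramified points at two, which is valid in the presence of wild ramification since the different exponent at any point is at least $e_P-1$.
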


We now give two proofs of Theorem~\ref{intro3}, one
assuming familiarity with basic facts about Galois theory and degree-one rational functions, and the other assuming nothing.

\begin{proof}[Proof of Theorem~\ref{intro3}]
The remarks after Theorem~\ref{intro3} show that
the functions in (1)--(3) permute $\bP^1(\F_q)$, so that also any function equivalent to one of these permutes $\bP^1(\F_q)$.
Henceforth assume that $f(X)\in\F_q(X)$ has degree $3$ and permutes $\bP^1(\F_q)$.  We may assume $q>9$, since the result is easy to verify via computer when $q\le 9$.  Thus $f(X)$ is exceptional by Lemma~\ref{introbd}.  If $f(X)$ is inseparable then $f(X)=h(X^p)$ with $p:=\charp(\F_q)$ and $h(X)\in\F_q(X)$, so that $p=3$ and $\deg(h)=1$, whence $f(X)$ is equivalent to $X^3$.  Henceforth
assume $f(X)$ is separable.  By Lemma~\ref{basics},
the arithmetic and geometric monodromy groups $A$
and $G$ of $f(X)$ are transitive subgroups of $S_3$
such that $G$ is a proper subgroup of $A$, so that $G=A_3$ and $A=S_3$.  Thus $\F_{q^2}(x)/\F_{q^2}(f(x))$ is Galois of degree $3$, with Galois group generated by an order-$3$
automorphism of $\F_{q^2}(x)$ which fixes each
element of $\F_{q^2}$ and maps $x\mapsto \mu(x)$ for some degree-one $\mu(X)\in\F_{q^2}(X)$.  It follows that $\mu(X)$ has order $3$ under composition.
Thus there is some degree-one $\nu\in\bar\F_q(X)$
such that $\theta(X):=\nu^{-1}\circ\mu\circ\nu$ is either $X+1$ (if $3\mid q$) or $\omega X$ with $\omega$ of order $3$ (if $3\nmid q$).
Since $\widehat f(x):=f\circ\nu(x)$ is fixed
by the $\bar\F_q$-automorphism $\sigma$ of $\bar\F_q(x)$ which maps $x\mapsto\theta(x)$, and the order of $\sigma$ is $3$ which equals $[\bar\F_q(x):\bar\F_q(\widehat f(x))]$, it follows
that the fixed field of $\sigma$ is $\bar\F_q(\widehat f(x))$.  Since plainly $\sigma$ fixes $g(x)$, where $g(X):=X^3-X$ if $3\mid q$
and $g(X)=X^3$ if $3\nmid q$, it follows that 
there is a degree-one $\rho(X)\in\bar\F_q(X)$ such that $\rho\circ \widehat f(X)=g(X)$.
If $g(X)=X^3-X$ then $f(X)$ has a unique branch point, and this branch point has a unique $f$-preimage, so that both the branch point and its preimage must be in $\bP^1(\F_q)$.  Up to equivalence over $\F_q$, we may assume that both of these points are $\infty$, so that $f$ is equivalent to $a(X) \circ (X^3-X)\circ b(X)$ for some degree-one $a,b\in\bar\F_q[X]$, whence $f(X)$ is equivalent to an additive polynomial over $\F_q$.
Henceforth assume $3\nmid q$, so that $f(X)$ is equivalent over $\bar\F_q$ to $X^3$, and thus $f(X)$ has exactly two branch points, each of which has a unique preimage.  If the preimages of the branch points are in $\bP^1(\F_q)$ then, up to equivalence, we may assume that the unique root of $f(X)$ is $0$ and the unique pole of $f(X)$ is $\infty$, so that $f(X)=\gamma X^3$ with $\gamma\in\F_q^*$.  Finally, suppose that $\delta\in\bar\F_q\setminus\F_q$ is a preimage of a branch point of $f(X)$.  Since the $q$-th power map permutes the set of preimages of branch points, it follows that $\delta\in\F_{q^2}$ and the preimages of branch points are $\delta$ and $\delta^q$.  Thus the branch points of $f(X)$ are $f(\delta)$ and $f(\delta^q)=f(\delta)^q$, which are in $\F_{q^2}\setminus\F_q$.  Hence $f(X)$ is a R\'edei function, so the conclusion follows
from Lemma~\ref{ok}.
\end{proof}

Our second proof of Theorem~\ref{intro3} uses the following lemma.

\begin{lemma}\label{mu3}
Let $f(X)\in\F_q(X)$ be a separable exceptional rational function of degree $3$.  Then $f(\mu(X))=f(X)$ for some degree-one $\mu(X)\in\F_{q^2}(X)\setminus\F_q(X)$ having order $3$ under composition.
\end{lemma}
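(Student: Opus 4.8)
The plan is to study the off-diagonal part of the ``coincidence'' polynomial of $f$ and to show that, because $f$ is exceptional, it must split over $\bar\F_q$ into two graphs of degree-one maps. Write $f=a/b$ with coprime $a,b\in\F_q[X]$ and $\max(\deg a,\deg b)=3$, and let $F(X,Y):=a(X)b(Y)-a(Y)b(X)\in\F_q[X,Y]$ be the numerator of $f(X)-f(Y)$. This $F$ is antisymmetric and vanishes on the diagonal, so $F=(X-Y)H$ for some symmetric $H\in\F_q[X,Y]$ of degree at most $2$ in each of $X$ and $Y$. Since $f$ is separable we have $f'\ne 0$, hence $a'b-ab'\ne 0$; as $\partial_Y F|_{Y=X}=-(a'b-ab')(X)$ this shows $X-Y$ divides $F$ exactly once and $H(X,X)\ne 0$. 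I would also record two features of $H$. First, it has no factor lying in $\bar\F_q[X]$: if $b(\alpha)=0$ then $F(\alpha,Y)=a(\alpha)b(Y)\not\equiv 0$, so no vertical line divides $F$, and hence $H$ is primitive in each variable. Second, by separability $f$ has only finitely many branch points (Lemma~\ref{bpfinite}), so for generic $\alpha$ the fiber $f^{-1}(f(\alpha))$ consists of three distinct points; thus the two off-diagonal partners of $\alpha$ are distinct, $H(\alpha,Y)$ has distinct roots, and $H$ is squarefree.

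Next I would invoke exceptionality through Lemma~\ref{basics}(2)(b): any absolutely irreducible $G\in\F_q[X,Y]$ dividing $F$ must be a scalar multiple of $X-Y$. Since $H\in\F_q[X,Y]$ has degree $2$ in $Y$ but is not a multiple of $X-Y$, it cannot be absolutely irreducible. Being primitive, squarefree, of $Y$-degree $2$, and reducible over $\bar\F_q$, the polynomial $H$ must therefore factor over $\bar\F_q$ as $H=c\,H_1H_2$ with $H_1,H_2$ distinct, each of the form $r_i(X)Y-s_i(X)$ with $\gcd(r_i,s_i)=1$. Setting $\mu_i:=s_i/r_i\in\bar\F_q(X)$, the relation $H(X,\mu_i(X))=0$ forces $F(X,\mu_i(X))=0$ (as $\mu_i\ne X$), i.e.\ $f\circ\mu_i=f$ in $\bar\F_q(X)$; multiplicativity of degree under composition then gives $\deg(f)\deg(\mu_i)=\deg(f)$, so $\deg\mu_i=1$. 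Thus $\mu_1,\mu_2$ are two distinct nonidentity degree-one maps over $\bar\F_q$ that are fixed by post-composition with $f$.

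The degree-one maps $\nu$ with $f\circ\nu=f$ form a group $D$, and via Lemma~\ref{aut1} they induce $\bar\F_q(f(x))$-automorphisms of $\bar\F_q(x)$; since $[\bar\F_q(x):\bar\F_q(f(x))]=3$ by Lemma~\ref{lemsep}, we get $\abs{D}\le 3$. As $D$ contains the two distinct nonidentity elements $\mu_1,\mu_2$ together with the identity, $\abs{D}=3$, so $D$ is cyclic of order $3$, $\mu:=\mu_1$ has order $3$, and $\mu_2=\mu^{-1}$. It remains to descend $\mu$ to $\F_{q^2}$. The $q$-power Frobenius fixes $H$ (as $H\in\F_q[X,Y]$) and hence permutes its two absolutely irreducible factors $\{H_1,H_2\}$. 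If it fixed $H_1$ up to a scalar, then $H_1$ would be proportional to an absolutely irreducible polynomial over $\F_q$ dividing $F$ but unequal to a multiple of $X-Y$, contradicting Lemma~\ref{basics}(2)(b). Hence Frobenius swaps $H_1$ and $H_2$, so $H_1$ is defined over $\F_{q^2}$ but not over $\F_q$; equivalently $\mu\in\F_{q^2}(X)\setminus\F_q(X)$, completing the proof.

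I expect the main obstacle to be the passage from ``$H$ is not absolutely irreducible'' to ``$H$ splits into two graphs of degree-one maps'': one must rule out factors supported only in $X$ (handled by primitivity) and confirm that the two factors are genuinely of bidegree $(1,1)$ and yield degree-one $\mu_i$ (handled by the composition-degree identity, which also makes the argument work uniformly in every characteristic with $3\nmid q$, sidestepping the quadratic formula in characteristic $2$). A secondary point needing care is the Frobenius descent ``up to scalars,'' but this is routine once the two-element factor set has been identified.
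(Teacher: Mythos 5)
Your proposal is correct and follows essentially the same route as the paper's proof: both factor the numerator of $f(X)-f(Y)$ as $(X-Y)H(X,Y)$, invoke Lemma~\ref{basics}(2) to force $H$ to split over $\bar\F_q$ into two bidegree-$(1,1)$ irreducible factors that Frobenius must swap, and thereby obtain a degree-one $\mu\in\F_{q^2}(X)\setminus\F_q(X)$ of order $3$ with $f\circ\mu=f$. The only differences are local: you exclude the degenerate case $X-Y\mid H$ up front via the identity $H(X,X)=a'b-ab'\ne 0$ and get squarefreeness of $H$ from generic fibers, whereas the paper excludes the corresponding case $G=c(X-Y)^3$ by a branch-point count, and you bound the order of the group of self-equivalences by the field degree $[\bar\F_q(x):\bar\F_q(f(x))]=3$ where the paper instead uses closure of the three-element factor set under composition and Frobenius.
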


\begin{proof}
Let $G(X,Y)$ be the numerator of $f(X)-f(Y)$, so that $G(X,Y)\in\F_q[X,Y]$ has $X$-degree $3$ and $Y$-degree $3$, and $G(X,Y)$ is not divisible by any nonconstant element of $\bar\F_q[X]$ or $\bar\F_q[Y]$.  By Lemma~\ref{basics}, every irreducible factor of $G(X,Y)$ in $\F_q[X,Y]$ which remains irreducible in $\bar\F_q[X,Y]$ is a constant times $X-Y$.  Note that $X-Y$ divides $G(X,Y)$, and
write $G(X,Y)=(X-Y)H(X,Y)$ where $H(X,Y)\in\F_q[X,Y]$ has $X$-degree $2$ and $Y$-degree $2$.  Thus $H(X,Y)$ must factor in $\bar\F_q[X,Y]$ as the product of two irreducible polynomials $H_1,H_2\in\bar\F_q[X,Y]$ which each have $X$-degree $1$ and $Y$-degree $1$.  Hence $H_i(X,Y)$ is the numerator of $X-\mu_i(Y)$ for some degree-one $\mu_i(X)\in\bar\F_q(X)$.  Let $\Gamma_f$ be the set of all degree-one $\nu(X)\in\bar\F_q(X)$ for which $f\circ\nu=f$, so that $\Gamma_f$ is closed under composition, and also $\Gamma_f$ is closed under the map $\psi\colon \nu(X)\mapsto \nu^{(q)}(X)$ which raises all coefficients of $\nu(X)$ to the $q$-th power.  Since $\Gamma_f$ is also the set of degree-one $\nu(X)\in\bar\F_q(X)$ for which the numerator of $X-\nu(Y)$ divides $G(X,Y)$, we have $\Gamma_f=\{X,\mu_1(X),\mu_2(X)\}$, and Lemma~\ref{basics}
implies $\Gamma_f \cap\F_q(X)=\{X\}$.  Thus if $\mu_1(X)=X$
then since $\mu_2^{(q)}(X)\in S$ we must have $\mu_2^{(q)}(X)=\mu_2(X)$, so that $\mu_2(X)\in\F_q(X)$ and hence $\mu_2(X)=X$.  In this
case the numerator of $f(X)-f(Y)$ is $(X-Y)^3$, so that every element of $\bar\F_q\setminus f(\infty)$
has a unique $f$-preimage in $\bP^1(\bar\F_q)$, and hence is a branch point of $f(X)$, which contradicts
separability by Lemma~\ref{bpfinite}.  Hence $\mu_1(X)\ne X$, so $\mu_1(X)\notin\F_q(X)$.  Since $S=\{X,\mu_1(X),\mu_2(X)\}$ is closed under $\psi$, it follows that $\psi$ interchanges $\mu_1(X)$ and $\mu_2(X)$, so that $\mu_1(X)\in\F_{q^2}(X)$.  Since $S$ is closed under composition, $\mu_1(X)$ has order $3$ under composition.
\end{proof}

\begin{proof}[Alternate proof of Theorem~\ref{intro3}]
Just as in the start of the first proof, it suffices to show that every separable exceptional $f(X)\in\F_q(X)$ of degree $3$ is equivalent to a function in (1)--(3).  By Lemma~\ref{mu3}, there is a degree-one $\mu(X)\in\F_{q^2}(X)\setminus\F_q(X)$ which has order $3$ under composition and satisfies $f(\mu(X))=f(X)$.
The numerator of $\mu(X)-X$ has degree $2$ if $\mu(\infty)\ne\infty$ and degree at most $1$ otherwise, so the set $\Delta$ of fixed points of $\mu(X)$ in $\bP^1(\bar\F_q)$ has size $1$ or $2$.  Since $\mu(X)$ has order $3$ and $f(\mu(X))=f(X)$, if $\beta\in\bP^1(\bar\F_q)$ is not fixed by $\mu(X)$ then $\beta$, $\mu(\beta)$, and $\mu(\mu(\beta))$ are three distinct $f$-preimages of $f(\beta)$, so they comprise all $f$-preimages of $f(\beta)$, whence $f(\beta)$ is not a branch point of $f(X)$, and also $f(\beta)\notin f(\Delta)$.  Hence the elements of $\Delta$ are the only $f$-preimages of $f(\Delta)$, so (since $\abs{\Delta}\le 2$) each element of
$f(\Delta)$ is a branch point of $f(X)$.  Thus $f(\Delta)$ is the set of branch points of $f(X)$, and hence is preserved by the $q$-th power map.  Moreover, if some $\delta\in \Delta$ is not in $\bP^1(\F_q)$ then, since $f(\delta^q)=f(\delta)^q$ is in $f(\Delta)$, we have $\Delta=\{\delta,\delta^q\}$; since $\delta^q$ has the same multiplicity as an $f$-preimage of $f(\delta^q)$ as does $\delta$ as an $f$-preimage of $f(\delta)$, while the elements of $\Delta$ are the only $f$-preimages of $f(\Delta)$, it follows that $\delta$ is the unique $f$-preimage of $f(\delta)$, so that $f(\delta)\in\F_{q^2}\setminus\F_q$.

First assume $\mu(X)$ has a unique fixed point $\delta$ in $\bP^1(\bar\F_q)$.  Then $\delta\in\bP^1(\F_q)$, so also
$f(\delta)\in\bP^1(\F_q)$.
Pick degree-one $\nu,\theta\in\F_q(X)$ such that $\nu(\delta)=\infty=\theta(f(\delta)$, and put
$g:=\theta\circ f\circ\nu^{-1}$ and $\rho:=\nu\circ \mu\circ\nu^{-1}$.  Then $g\circ\rho=g$ where $g(X)$ is in $\F_q(X)$ and has degree $3$, with $\infty$ being the unique $g$-preimage of $\infty$, so that $g(X)$ is a polynomial.  Also $\rho(X)$ is a degree-one rational function in $\F_{q^2}(X)\setminus\F_q(X)$ with order $3$ under composition, and $\infty$ is the unique fixed point of $\rho(X)$.  Thus $\rho(X)=X+\gamma$ for some $\gamma\in\F_{q^2}\setminus\F_q$, and we must have $\charp(\F_q)=3$ since $\rho(X)$ has order $3$.
Since $g(X)$ is a degree-$3$ polynomial satisfying $g(X+\gamma)=g(X)$ we have $g(X)=a(X) \circ (X^3-\gamma^2 X)$ for some degree-one
$a\in\F_q[X]$.  Since $g(X)\in\F_q[X]$,
this implies $\gamma^2\in\F_q$, so that $f(X)$ is equivalent (over $\F_q$) to $X^3-\delta X$ where $\alpha := \gamma^2$ is a nonsquare in $\F_q$.

Henceforth assume that $\abs{\Delta}=2$.  Now suppose $\Delta\subseteq\bP^1(\F_q)$, and pick $\nu\in\F_q(X)$ of degree one such that $\nu(\Delta)=\{0,\infty\}$.  Then $g:=f\circ\nu^{-1}$ is a degree-$3$ rational function in $\F_q(X)$, and $\rho:=\nu\circ\mu\circ\nu^{-1}$ is a degree-one rational function in $\F_{q^2}(X)\setminus\F_q(X)$ with order $3$ under composition, where $\rho$ fixes $0$ and $\infty$.  Thus
$\rho(X)=\omega X$ where $\omega\in\F_{q^2}\setminus \F_q$ has order $3$,
so since $g\circ\rho=g$ we have $g=\theta\circ X^3$ for some
degree-one $\theta\in\F_q(X)$, whence $f(X)$ is equivalent to $X^3$.  Plainly $\omega\in\F_{q^2}\setminus \F_q$ has order $3$ just when $q\equiv 2\pmod 3$.

Finally, suppose $\Delta$ contains an element outside $\bP^1(\F_q)$,
so that $\Delta=\{\delta,\delta^q\}$ for some $\delta\in\F_{q^2}\setminus\F_q$ such that $f(\delta)\in\F_{q^2}\setminus\F_q$ and $\delta$ is the unique $f$-preimage of $f(\delta)$, while also $\delta^q$ is the unique $f$-preimage of $f(\delta)^q$. Put $\nu(X):=(X-\delta^q)/(X-\delta)$ and $\theta(X):=(X-f(\delta)^q)/(X-f(\delta))$, so
that $g:=\theta\circ f\circ\nu^{-1}$ has $0$ as its unique root and $\infty$ as its unique pole, whence $g(X)=\gamma X^3$ for some $\gamma\in\F_{q^2}^*$.  Thus
$f(X)=\theta^{-1}\circ \gamma X^3\circ\nu=
\tau\circ \widehat f(X)$ where $\tau:=\theta^{-1}\circ \gamma X\circ\nu$ and $\widehat f(X):=\nu^{-1}\circ X^3\circ\nu$.
It is easy to check that $\nu(X)$ and $\theta(X)$ map $\bP^1(\F_q)$ bijectively onto the set $\Lambda$ of $(q+1)$-th roots of unity in $\F_{q^2}$.  Since $f(X)$ permutes $\bP^1(\F_q)$, it follows that $g(X)=\gamma X^3$ permutes $\Lambda$, so that $\gamma\in \Lambda$ and $(3,q+1)=1$. Since $\rho:=\nu\circ \mu\circ \nu^{-1}$ is a degree-one rational function in $\F_{q^2}(X)$ with order $3$ under composition, and $\rho$ fixes $0$ and $\infty$, we know $\rho(X) = \omega X$ where $\omega \in \F_{q^2}^*$ has order $3$, so $\charp(\F_q)\ne 3$ which implies $q\equiv 1 \pmod 3$. Thus $\tau(X)$ is a degree-one rational function in $\F_{q^2}(X)$ which permutes $\bP^1(\F_q)$, so that $\tau(X)$ takes values in $\bP^1(\F_q)$ at $0$, $1$, and $\infty$, whence $\tau(X)\in\F_q(X)$.  This implies $\widehat f(X)=\tau^{-1}\circ f$ is in $\F_q(X)$, so that $f(X)$ is equivalent to $\widehat f(X)=\nu^{-1}\circ X^3\circ\nu$. 
\end{proof}

\begin{rmk}
Our proofs of Theorem~\ref{intro3} (and also the proof in \cite{FM}) rely on computer calculations to show that there are no non-exceptional degree-$3$ permutation rational functions in $\F_q(X)$ when $q\le 9$.  This can also be shown without a computer, as follows.  For any $f(X)\in\F_q(X)$ of degree $3$, it is easy to show that the normal closure of $\F_q(x)/\F_q(f(x))$ has genus at most $1$.  We will show in a subsequent paper that there are no non-exceptional indecomposable permutation rational functions $f(X)\in\F_q(X)$ (of any degree) for which the normal closure of $\F_q(x)/\F_q(f(x))$ has genus at most $1$.
\end{rmk}

\section{Additive polynomials}\label{secadd}

In this section we prove a Galois-theoretic characterization of additive polynomials among all rational functions, and use it to describe the indecomposable exceptional rational functions in $\F_q(X)$ of degree $2^r$ 
when $r\in\{3,5,7\}$.  The results of this section are also used in our treatment of degree-$4$ exceptional rational functions in the next section.

\begin{defn}
An \emph{additive polynomial} in\/ $\F_q[X]$ is a polynomial of the form $\sum_{i=0}^r \alpha_i X^{p^i}$ with $\alpha_i\in\F_q$ and $p:=\charp(\F_q)$.
\end{defn}

For each positive integer $s$, an additive polynomial in $\F_q[X]$ induces a homomorphism from the additive group of $\F_{q^s}$ to itself.  Since a homomorphism from a finite group to itself is bijective if and only if it has trivial kernel, this yields the 
following characterization of exceptional additive polynomials.

\begin{lemma}\label{addep}
If $f(X)\in\F_q[X]$ is additive then the following are equivalent:
\begin{enumerate}
\item $f(X)$ is exceptional,
\item $f(X)$ permutes\/ $\F_q$,
\item $f(X)$ has no roots in\/ $\F_q^*$.
\end{enumerate}
\end{lemma}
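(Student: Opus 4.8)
The plan is to establish the cycle of implications (1) $\Rightarrow$ (2) $\Rightarrow$ (3) $\Rightarrow$ (1). The single tool I would use throughout is the observation stated just before the lemma: since $f(X)$ is additive, for each positive integer $s$ it induces an endomorphism of the additive group of $\F_{q^s}$, which is a bijection precisely when its kernel is trivial, i.e.\ precisely when $f(X)$ has no nonzero root in $\F_{q^s}$. Because $f(X)$ is a polynomial, $\infty$ is the unique preimage of $\infty$, so for each $s$ this condition is also equivalent to $f(X)$ permuting $\bP^1(\F_{q^s})$.

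The two implications (1) $\Rightarrow$ (2) and (2) $\Rightarrow$ (3) are immediate. For the first, if $f(X)$ is exceptional then by the remark following Definition~\ref{defexc} it permutes $\bP^1(\F_q)$, and hence permutes $\F_q$ since it is a polynomial. For the second, applying the observation above with $s=1$ shows directly that $f(X)$ permutes $\F_q$ if and only if it has no nonzero root in $\F_q$, which is condition (3); alternatively, injectivity on $\F_q$ together with $f(0)=0$ rules out a root in $\F_q^*$.

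The substance of the lemma, and the step I expect to require the most care, is (3) $\Rightarrow$ (1). Here I would note that the finitely many distinct nonzero roots of $f(X)$ in $\bar\F_q$ each have degree over $\F_q$ exceeding $1$, by hypothesis. If there are none then $f(X)$ permutes every $\F_{q^\ell}$ and is exceptional; otherwise let $M$ be the maximum of these degrees, and observe that a nonzero root lies in $\F_{q^\ell}$ exactly when its degree divides $\ell$, so for every prime $\ell>M$ no nonzero root lies in $\F_{q^\ell}$. By the observation of the first paragraph, $f(X)$ then permutes $\bP^1(\F_{q^\ell})$ for each of the infinitely many primes $\ell>M$, so $f(X)$ is exceptional. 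The only real content is this passage from the field-of-definition hypothesis to infinitely many permuting extensions, which reduces to the elementary fact that a prime larger than every root-degree is divisible by none of them.
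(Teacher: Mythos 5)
Your proof is correct and takes essentially the same approach as the paper: the paper derives this lemma directly from the observation stated just before it, namely that an additive polynomial induces an endomorphism of the additive group of $\F_{q^s}$ for every $s$, which is bijective exactly when its kernel is trivial. Your argument for (3) $\Rightarrow$ (1), using the degrees of the finitely many nonzero roots and primes $\ell$ exceeding them, simply makes explicit the step that the paper leaves implicit.
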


The main result of this section is the following Galois-theoretic characterization of additive polynomials.

\begin{prop}\label{add}
If $f(X)\in\F_q(X)$ is separable with geometric monodromy group $G$ then the following are equivalent:
\begin{enumerate}
\item $\abs{G}=\deg(f)$, and $\deg(f)$ is a power of $\charp(\F_q)$,
\item $f(X)$ is equivalent to an additive polynomial in\/ $\F_q[X]$.
\end{enumerate}
\end{prop}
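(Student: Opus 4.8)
The plan is to prove the two implications separately; the forward direction (2)$\Rightarrow$(1) is a routine monodromy computation, while the converse requires a Galois-theoretic normal form followed by a descent from $\bar\F_q$ to $\F_q$, which is where the real work lies.

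For \textbf{(2)$\Rightarrow$(1)}, I would write $f=\mu\circ\ell\circ\nu$ with degree-one $\mu,\nu\in\F_q(X)$ and $\ell\in\F_q[X]$ additive. Since degree-one rational functions are separable, separability of $f$ forces separability of $\ell$, so $\ell'$ equals the nonzero coefficient of its linear term; hence $\deg\ell=p^r$ for some $r$ and the zero set $V$ of $\ell$ is an $\F_p$-subspace of $\bar\F_q$ of order $p^r$. The translations $x\mapsto x+v$ for $v\in V$ are $\bar\F_q$-automorphisms of $\bar\F_q(x)$ fixing $\ell(x)$, since $\ell(x+v)=\ell(x)+\ell(v)=\ell(x)$; by Lemma~\ref{aut} they constitute $\Gal(\bar\F_q(x)/\bar\F_q(\ell(x)))$, so the geometric monodromy group of $\ell$ has order $p^r=\deg\ell$. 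As equivalence does not change the extension $K(x)/K(f(x))$ (replace $x$ by $\nu(x)$ and absorb $\mu$), $f$ has the same monodromy group, giving $\abs{G}=\deg f=p^r$.

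For \textbf{(1)$\Rightarrow$(2)}, note that $\abs{G}=\deg f$ lets me invoke Lemma~\ref{PGL} to get $\Omega=L(x)$, where $L$ is the constant field; thus $G=\Gal(L(x)/L(f(x)))$ is a subgroup of $\Aut_L(L(x))\cong\PGL_2(L)$ of order $p^r$, i.e.\ a $p$-group. I would then classify finite $p$-subgroups of $\PGL_2(L)$ in characteristic $p$: every nontrivial element is unipotent with a unique fixed point in $\bP^1(L)$ (the double root of an $L$-rational quadratic, which is $L$-rational because $L$ is finite), a central element of order $p$ forces a common fixed point $P\in\bP^1(L)$, and after moving $P$ to $\infty$ over $L$ the group lies in the affine group $x\mapsto ax+b$, whose multiplicative parts form a $p$-subgroup of the prime-to-$p$ group $L^*$ and hence are trivial. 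So, after replacing $f$ by an $L$-equivalent function, $G$ consists of the translations by an $\F_p$-subspace $V\subseteq L$ of order $p^r$. Since $\prod_{v\in V}(X-v)$ is additive of degree $\abs{G}$ (a classical fact) and is fixed by $G$, Lemma~\ref{aut} gives $L(x)^G=L(\prod_{v\in V}(X-v))$, which also equals $L(f(x))$; hence $f$ is $L$-equivalent to the additive polynomial $\prod_{v\in V}(X-v)$. In particular $f$ has a single branch point $B$, and $B$ has a single preimage $P$ ramified of index $p^r$, because this additive polynomial is unramified over every finite point and totally ramified over $\infty$.

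The \textbf{main obstacle} is the descent: upgrading this $\bar\F_q$-equivalence to an $\F_q$-equivalence with an additive polynomial \emph{in $\F_q[X]$}. Here I would use that $f\in\F_q(X)$ together with Lemma~\ref{bprat}: the $q$-power map permutes the branch points of $f$, so the unique branch point $B$ is fixed and $B\in\bP^1(\F_q)$; it then permutes the $f$-preimages of $B$, so the unique preimage $P$ is fixed and $P\in\bP^1(\F_q)$. Choosing degree-one $\mu_0,\nu_0\in\F_q(X)$ carrying $B$ and $P$ to $\infty$, the function $g:=\mu_0\circ f\circ\nu_0^{-1}\in\F_q[X]$ is a polynomial of degree $p^r$ whose only branch point is $\infty$, totally ramified with unique preimage $\infty$. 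Rerunning the monodromy analysis for $g$ (now $G$ automatically fixes $\infty$, so $G$ is again a translation group, by some $\F_p$-subspace $W\subseteq\bar\F_q$) yields $g=\alpha\prod_{w\in W}(X-w)+\beta$ with $\alpha,\beta\in\bar\F_q$; since the additive factor vanishes at $0$ we get $\beta=g(0)\in\F_q$, and replacing $g$ by $g-g(0)$ (an $\F_q$-equivalence) leaves $g=\alpha\prod_{w\in W}(X-w)=\sum_i\alpha c_i X^{p^i}$. Because $g\in\F_q[X]$, each coefficient $\alpha c_i$ lies in $\F_q$, so $g$ is an additive polynomial in $\F_q[X]$ equivalent to $f$ over $\F_q$, completing the argument. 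The delicate points to verify carefully are the $L$-rationality of the fixed point (a short characteristic-$2$ check) and, above all, that uniqueness of the branch point and its preimage persists under Frobenius so that the normalization $g$ can be taken over $\F_q$; once that is secured, the coefficient descent is automatic.
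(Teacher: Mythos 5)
Your proposal is correct and follows essentially the same route as the paper's proof: Lemma~\ref{PGL} to realize the Galois closure as $L(x)$, conjugation of the $p$-group $G$ into the group of translations, Ore's lemma (Lemma~\ref{Ore}) together with Lemma~\ref{aut} to obtain the additive normal form over the constant field, and Frobenius-invariance of the unique totally ramified point and its unique preimage (Lemma~\ref{bprat}) to descend to $\F_q$. The only differences are cosmetic: where the paper simply cites Sylow's theorem (the translations form a Sylow $p$-subgroup of $\PGL_2(\F_Q)$, of order $Q$ inside a group of order $Q^3-Q$), you classify $p$-subgroups of $\PGL_2(L)$ by hand via the center and the unipotent fixed-point analysis, and in the descent you first normalize $f$ to a polynomial in $\F_q[X]$ fixing $\infty$ and then rerun the group-theoretic argument, whereas the paper keeps the factorization $f=\nu\circ r\circ\mu$ over $\F_Q$ and shows the correcting degree-one maps $\rho\circ\nu$ and $\mu\circ\theta$ are polynomials.
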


The proof uses the following easy classical result \cite[Thm.~8 of Ch.~1]{Ore}.

\begin{lemma}\label{Ore}
If $f(X)\in\F_q[X]$ is squarefree then the roots of $f(X)$ form a group under addition if and only if $f(X)$ is additive.
\end{lemma}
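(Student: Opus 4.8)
The plan is to prove the two implications separately, the forward direction (roots form a group $\Rightarrow$ additive) being the substantive one. For the easy direction, suppose $f(X)=\sum_{i=0}^r \alpha_i X^{p^i}$ is additive, where $p:=\charp(\F_q)$. Then $a\mapsto f(a)$ is a homomorphism of the additive group of $\bar\F_q$, since $(a+b)^{p^i}=a^{p^i}+b^{p^i}$ in characteristic $p$ gives $f(a+b)=f(a)+f(b)$. The set of roots of $f(X)$ in $\bar\F_q$ is exactly the kernel of this homomorphism, hence an additive subgroup; note that squarefreeness is not needed in this direction.

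For the converse, suppose $f(X)$ is squarefree and its set $V$ of roots in $\bar\F_q$ is a group under addition. Let $n:=\deg f$ and let $\lambda\in\F_q^*$ be the leading coefficient of $f$. Squarefreeness gives $f(X)=\lambda\,g(X)$ with $g(X):=\prod_{v\in V}(X-v)$ monic of degree $n=\abs{V}$, and $g=\lambda^{-1}f\in\F_q[X]$. It suffices to show that $g$ is additive, since then so is $f=\lambda g$.

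The decisive point is a \emph{translation invariance}: for each fixed $v\in V$, reindexing the product gives $g(v+Y)=\prod_{w\in V}(Y-(w-v))=\prod_{u\in V}(Y-u)=g(Y)$, because $w\mapsto w-v$ permutes $V$. Now set $P(X,Y):=g(X+Y)-g(X)-g(Y)\in\bar\F_q[X,Y]$. The leading $X$-terms of $g(X+Y)$ and $-g(X)$ cancel, so $\deg_X P\le n-1$. On the other hand, $P(v,Y)=g(v+Y)-g(v)-g(Y)=g(Y)-0-g(Y)=0$ for each $v\in V$, so the $n$ distinct elements of $V$ are roots of $P$ viewed as a polynomial in $X$ over $\bar\F_q(Y)$. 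A polynomial of $X$-degree at most $n-1$ with $n$ distinct roots is zero, whence $P\equiv 0$; that is, $g(X+Y)=g(X)+g(Y)$ identically.

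Finally I would convert this functional equation into the explicit additive shape. Writing $g(X)=\sum_k c_k X^k$ and expanding $g(X+Y)=g(X)+g(Y)$ by the binomial theorem, the coefficient of $X^j Y^{k-j}$ with $0<j<k$ yields $c_k\binom{k}{j}=0$; hence for every $k$ with $c_k\ne 0$ one has $p\mid\binom{k}{j}$ for all $0<j<k$, which by Lucas' theorem forces $k$ to be a power of $p$. Thus $g(X)=\sum_i c_{p^i}X^{p^i}$, and since $g\in\F_q[X]$ its coefficients lie in $\F_q$, so $g$, and therefore $f$, is additive. The main obstacle is the converse direction, and within it the crux is recognizing the translation invariance $g(v+Y)=g(Y)$ that powers the degree-counting argument; the binomial-coefficient step is then a routine classical computation.
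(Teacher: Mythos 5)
Your proof is correct, but there is nothing in the paper to compare it against: the paper does not prove this lemma, instead citing it as a classical result (\cite[Thm.~8 of Ch.~1]{Ore}). Your argument --- translation invariance $g(v+Y)=g(Y)$ for $v\in V$, the degree count on $g(X+Y)-g(X)-g(Y)$ (degree $\le n-1$ in $X$ yet vanishing at the $n$ distinct elements of $V$, which exist precisely because $f$ is squarefree) forcing the functional equation $g(X+Y)=g(X)+g(Y)$, and then Lucas' theorem confining the exponents to powers of $\charp(\F_q)$ --- is exactly the standard classical proof, and it is complete as written.
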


\begin{proof}[Proof of Proposition~\ref{add}]
It is well known that (2) implies (1), so we assume (1).
Writing $n:=\deg(f)$, we may assume $n>1$ since otherwise the result is immediate.
Write $p:=\charp(\F_q)$, let $x$ be transcendental over $\F_q$, and put $t:=f(x)$. Let $\Omega$ be the Galois closure of $\F_q(x)/\F_q(t)$, and let $\F_Q$ be the algebraic closure of $\F_q$ in $\Omega$, where $Q=q^k$.
%
%
Since $n=\abs{G}$, Lemma~\ref{PGL} implies
$\Omega=\F_Q(x)$.
Thus $G$ is a subgroup of $\Gamma:=\Aut_{\F_Q}(\F_Q(x))$, and Lemma~\ref{aut1} shows that $\Gamma$ consists of the maps $h(x)\mapsto h(\mu(x))$ with $\mu(X)\in\F_Q(X)$ of degree one.  In particular, $\Gamma$ has order $Q^3-Q$, and one Sylow $p$-subgroup of $\Gamma$ is the group $U$ consisting of the maps $h(x)\mapsto h(x+\alpha)$ with $\alpha\in\F_Q$.
Since $G$ is a $p$-group contained in the finite group $\Gamma$, basic group theory shows that $G$ is contained in a Sylow $p$-subgroup of $\Lambda$, and that any two Sylow $p$-subgroups are conjugate.  Thus there is some $\sigma\in\Lambda$ such that $\sigma^{-1}  G\sigma$ is contained in $U$.  The elements of $\widehat G:=\sigma^{-1} G\sigma$ are thus $h(x)\mapsto h(x+\alpha)$ with $\alpha$ varying over an order-$n$ subgroup $H$ of $\F_Q$.  The fixed field $\F_Q(x)^{\widehat G}$ contains $r(x):=\prod_{\tau\in \widehat G}\tau(x)=\prod_{\lambda\in H}(x+\lambda)$,
where $r(X)$ is additive by Lemma~\ref{Ore}.
Since $\deg(r)=\abs{\widehat G}$, by Lemma~\ref{aut} we have $\F_Q(x)^{\widehat G}=\F_Q(r(x))$.  
It follows that $\F_Q(x)^G=\F_Q(\sigma(r(x)))$, so if $\sigma$ maps $h(x)$ to $h(\mu(x))$ for some degree-one $\mu(X)\in\F_Q(X)$ then $\F_Q(x)^G=\F_Q(r(\mu(x)))$.
Since $\F_Q(x)^G=\F_Q(t)=\F_Q(f(x))$, this implies $f=\nu\circ r\circ\mu$ for some degree-one $\nu\in\F_Q(X)$.  Since $\infty$ is the unique element of $\bP^1(\bar\F_q)$ which has a unique $r$-preimage, $\nu(\infty)$ is the unique element of $\bP^1(\bar\F_q)$ which has a unique $f$-preimage.  But since $f(X)\in\F_q(X)$, also $\nu(\infty)^q$ has a unique $f$-preimage, so that $\nu(\infty)^q=\nu(\infty)$ and thus $\nu(\infty)\in\bP^1(\F_q)$.  Likewise the unique $f$-preimage of $\nu(\infty)$ is $\mu^{-1}(\infty)$, which must equal its $q$-th power and hence lies in $\bP^1(\F_q)$.  Thus there are degree-one $\rho,\theta\in\F_q(x)$ such that
$\rho(\nu(\infty))=\infty$ and $\theta(\infty)=\mu^{-1}(\infty)$.  Since $\infty$ is fixed by the degree-one rational functions
$\rho\circ\nu$ and $\mu\circ\theta$ in $\F_Q(X)$, it follows that $\rho\circ\nu$ and $\mu\circ\theta$ are degree-one polynomials.  Since $r(X)$ is additive, this implies that $g:=(\rho\circ \nu)\circ r\circ(\mu\circ \theta)$ is an additive polynomial plus a constant.  But $g(X)$ equals $\rho\circ f\circ\theta$, so since $\rho,f,\theta\in\F_q(X)$ it follows that $g(X)=\alpha+L(X)$ for some $\alpha\in\F_q$ and some additive $L(X)\in\F_q[X]$.  Thus $f=\rho^{-1}\circ (X+\alpha)\circ L\circ\theta^{-1}$ is equivalent to an additive polynomial.
\end{proof}

\begin{rmk}
The polynomial case of Proposition~\ref{add} implies the main portions of \cite[Thm.~3]{BM}, \cite[Thm.~1.1]{Cfac}, and \cite[Thm.~2.10(a)]{T}.
\end{rmk}

\begin{lemma}\label{2gps}
If $K$ is an algebraically closed field with $\charp(K)\ne 2$, and $x$ is transcendental over $K$, then $\Aut_K (K(x))$ has no subgroup isomorphic to $(C_2)^3$, and every subgroup of $\Aut_K (K(x))$ isomorphic to $(C_2)^2$ is conjugate to $\langle \sigma,\tau\rangle$ where $\sigma(x)=-x$ and $\tau(x)=1/x$.
\end{lemma}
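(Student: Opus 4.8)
The plan is to carry out the entire argument inside $\PGL_2(K)$, which we identify with $\Aut_K(K(x))=\Gamma_K$ via Lemmas~\ref{aut1} and~\ref{aut2}; under this identification a degree-one $\mu(X)=(\alpha X+\beta)/(\gamma X+\delta)$ corresponds to the class of $\left(\begin{smallmatrix}\alpha & \beta\\ \gamma & \delta\end{smallmatrix}\right)$. First I would classify the involutions. If $M\in\GL_2(K)$ represents an order-$2$ element of $\PGL_2(K)$ then $M^2$ is a scalar matrix, so Cayley--Hamilton gives $M^2=(\Tr M)M-(\det M)I$; since $M$ is non-scalar this forces $\Tr M=0$. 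A trace-zero matrix in $\GL_2(K)$ has eigenvalues $\lambda,-\lambda$ with $\lambda\neq 0$, which are distinct because $\charp(K)\neq 2$, so $M$ is diagonalizable and its class is conjugate to that of $\mathrm{diag}(1,-1)$. Hence every involution in $\Aut_K(K(x))$ is conjugate to $\sigma\colon x\mapsto -x$, whose fixed points in $\bP^1(K)$ are exactly $0$ and $\infty$.

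Next I would compute the centralizer of $\sigma$. The matrices commuting with $\mathrm{diag}(1,-1)$ are precisely the diagonal and anti-diagonal ones, so the elements of $\Aut_K(K(x))$ commuting with $\sigma$ are exactly the maps $x\mapsto cx$ and $x\mapsto c/x$ with $c\in K^*$. Among these the involutions are $\sigma$ itself (from $c=-1$ in the diagonal case) together with every map $\tau_c\colon x\mapsto c/x$.

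With these two facts the Klein four-group case follows quickly. Given $V\cong(C_2)^2$, I would first choose any involution in $V$ and conjugate $V$ so that this involution becomes $\sigma=-x$. Since $V$ is abelian, its remaining two nontrivial elements commute with $\sigma$ and differ from it, so by the centralizer description they are $\tau\colon x\mapsto c/x$ and $\sigma\tau\colon x\mapsto -c/x$ for some $c\in K^*$. Conjugating by the diagonal map $x\mapsto\sqrt{c}\,x$ (where $\sqrt{c}\in K$ since $K$ is algebraically closed) fixes $\sigma$ and sends $\tau$ to $x\mapsto 1/x$, so $V$ is conjugate to $\langle\sigma,\tau\rangle$ with $\sigma=-x$ and $\tau=1/x$, as required.

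Finally I would rule out $(C_2)^3$. Such a subgroup would contain a copy of $\langle -x,\,1/x\rangle$ (after conjugation, by the previous paragraph) together with a further involution $\eta$ centralizing both $-x$ and $1/x$. The centralizer description forces $\eta=c/x$ for some $c\in K^*$, and imposing that $\eta$ commute with $x\mapsto 1/x$ yields $c^2=1$, i.e.\ $\eta\in\{1/x,-1/x\}$, both of which already lie in $\langle -x,\,1/x\rangle$. Thus no eighth element exists, so $(C_2)^3$ does not embed in $\Aut_K(K(x))$. The only delicate point is bookkeeping in the centralizer computation and verifying that the normalizing conjugation in the Klein four-group step preserves $\sigma$; everything else is routine linear algebra over an algebraically closed field of characteristic $\neq 2$.
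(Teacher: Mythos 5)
Your proof is correct, and its skeleton is the same as the paper's: conjugate one involution to $\sigma(x)=-x$, use commutativity to force every other nonidentity element into the form $x\mapsto c/x$, rescale by $x\mapsto\sqrt{c}\,x$, and check that no additional commuting involution can exist. Where you differ is in how the two key sub-steps are verified. The paper never passes to matrices: it shows an involution must have two fixed points (a single fixed point would make it conjugate to $x\mapsto x+\alpha$, whose order is $\charp(K)\neq 2$), conjugates those fixed points to $0$ and $\infty$ to get $-X$, and then notes that anything commuting with $\sigma$ must permute the fixed-point set $\{0,\infty\}$, hence equals $cx$ or $c/x$; it also treats $(C_2)^2$ and $(C_2)^3$ in a single pass, proving directly that the group is generated by $\sigma$ and one map $\beta/X$, rather than first settling the Klein four-group case and then excluding an eighth element. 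Your Cayley--Hamilton/eigenvalue classification of involutions and explicit centralizer computation give a more modular argument assembled from standard $\PGL_2$ facts, at the cost of leaving the paper's language of rational maps. One phrasing needs repair: anti-diagonal matrices do not commute with $\mathrm{diag}(1,-1)$ in $\GL_2(K)$ --- they anti-commute --- so the centralizer computation must be carried out in $\PGL_2(K)$, where one seeks classes of matrices $N$ with $N\,\mathrm{diag}(1,-1)\,N^{-1}=\pm\,\mathrm{diag}(1,-1)$ (the sign $\pm1$ being the only possibilities since conjugation preserves the determinant). With that correction your description of the centralizer of $\sigma$ as $\{x\mapsto cx\}\cup\{x\mapsto c/x\}$ is exactly right, and the rest of your argument goes through unchanged.
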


\begin{proof}
Let $G$ be a subgroup of $\Gamma:=\Aut_K (K(x))$ such that $G\cong (C_2)^r$ with $r\in\{2,3\}$.
As in Lemma~\ref{aut1}, we identify $\Gamma$ with the group $\Gamma_K$ of degree-one rational functions in $K(X)$ under the operation of functional composition.
Pick some $\sigma\in G$ of order $2$.  Then $\sigma\in K(X)$ is a degree-one rational function having order $2$ under composition, so $\sigma\ne X$.
Since the numerator of $\sigma-X$ has degree $2$ if $\sigma(\infty)\ne\infty$ and degree at most $1$ otherwise, $\sigma$ has either one or two fixed points in $\bP^1(K)$.  If $\sigma$ has just one fixed point then some conjugate of $\sigma$ in $\Gamma_K$ has $\infty$ as its unique fixed point, and hence equals $X+\alpha$ with $\alpha\in K^*$, so the order of $\sigma$ is $\charp(K)$ and hence is not $2$, contradiction.  Thus $\sigma$ has two fixed points, so there exists $\mu\in \Gamma_K$ which maps these fixed points to $0$ and $\infty$.  Then $\widehat\sigma:=\mu\sigma\mu^{-1}$ fixes $0$ and $\infty$, and has order $2$, so $\widehat\sigma=-X$.  Note that $\widehat\sigma$ is in $\widehat G:=\mu G\mu^{-1}$, which is isomorphic to $(C_2)^r$.
Since $\widehat G$ is abelian, each $\tau\in \widehat G\setminus\langle\widehat\sigma\rangle$ must commute with $\widehat\sigma$, and hence must permute the fixed points $0,\infty$ of $\widehat\sigma$.  If $\tau$ fixes $0$ and $\infty$ then $\tau=\alpha X$ with $\alpha\in K^*$, and since $\tau$ has order at most $2$ it follows that $\alpha\in\{1,-1\}$ so $\tau\in\langle\widehat\sigma\rangle$, contradiction.
Thus $\tau$ interchanges $0$ and $\infty$, so $\tau=\beta/X$ with $\beta\in K^*$.  Pick one such $\tau$, and note that any $\rho\in \widehat G\setminus\langle\widehat\sigma\rangle$ must have the form $\rho(X)=\gamma/X$ with $\gamma\in K^*$, so that $\widehat G$ contains $\tau\rho=\beta\gamma^{-1}X$, whence $\beta\gamma^{-1}\in\{1,-1\}$ so that $\rho\in\langle\widehat\sigma,\tau\rangle$.  Thus $\widehat G=\langle\widehat\sigma,\tau\rangle$, so that $r=2$.  Finally, for $\nu:=\delta X$ where $\delta^2=\beta$, we have
$\nu^{-1}\widehat G\nu=
\langle \nu^{-1}\widehat\sigma\nu,\,\nu^{-1}\tau\nu\rangle=
\langle -X,1/X\rangle$.
\end{proof}

\begin{prop}\label{even}
If $f(X)\in\F_q(X)$ is an indecomposable exceptional rational function of degree $2^r$ with $r\in\{3,5,7\}$ then $q$ is even and $f(X)$ is equivalent to an additive polynomial.
\end{prop}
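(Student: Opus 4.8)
The plan is to show that the geometric monodromy group $G$ of $f(X)$ is an elementary abelian $2$-group acting regularly on the $n:=\deg(f)=2^r$ conjugates of $x$; granting this, once we know the characteristic is $2$ the order $|G|=n$ is a power of the characteristic, and Proposition~\ref{add} then yields simultaneously that $q$ is even and that $f(X)$ is equivalent to an additive polynomial. First I would record that $f(X)$ is separable: since $f(X)$ is indecomposable, Lemma~\ref{indecinsep} shows that an inseparable such function has prime degree, whereas $n=2^r\ge 8$ is not prime. Thus the monodromy groups $A\supseteq G$ of $f(X)$ are defined as in Lemma~\ref{basics} and act on the set $S$ of the $n$ conjugates of $x$; by that lemma $A$ and $G$ are transitive with $G\triangleleft A$ and $A/G$ cyclic, and by Lemma~\ref{basics}(1) indecomposability makes $A$ primitive of degree $2^r$. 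I also note that exceptionality forces $G\ne A$: if $G=A$ then $A_1=G_1$, so their common orbits on $S$ are exactly the orbits of $A_1$; but $A_1$ fixes the point $x$ and $n\ge 2$, so $A_1$ has at least two orbits on $S$, contradicting the single-common-orbit criterion of Lemma~\ref{basics}(2)(c). Hence $A/G$ is a nontrivial cyclic group.

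The core of the argument is to reduce to the affine case. For each $r\in\{3,5,7\}$ (so $n\in\{8,32,128\}$) I would combine the classification of primitive permutation groups of degree $2^r$ with two further constraints in order to pin down $(A,G)$. The first is that $\overline{\F_q}(x)/\overline{\F_q}(f(x))$ is a cover of the projective line by the projective line, so the inertia generators of the geometric monodromy group $G$ form a genus-$0$ ramification system, which via Riemann--Hurwitz on the Galois closure excludes most primitive actions. The second is exceptionality, i.e.\ the single-common-orbit condition of Lemma~\ref{basics}(2)(c) together with the nontrivial cyclic quotient $A/G$. Using these I would eliminate the almost simple primitive groups of degree $2^r$ (for instance $\PSL_2(7)$ and $\PGL_2(7)$ in degree $8$, and the alternating and symmetric groups), as well as the affine groups whose point stabilizer $G_1$ is nontrivial, leaving only the affine configuration in which the regular normal socle $V\cong (C_2)^r$ coincides with $G$. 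In particular $G$ is then elementary abelian of order exactly $2^r$.

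In this remaining case $|G|=n=2^r$, so Lemma~\ref{PGL} gives that the Galois closure has rational function field, and hence by Lemma~\ref{aut1} the group $G\cong(C_2)^r$ embeds in $\Aut_{\overline{\F_q}}(\overline{\F_q}(x))$. Since $r\ge 3$, this group contains a copy of $(C_2)^3$. By Lemma~\ref{2gps}, an algebraically closed field of characteristic different from $2$ admits no such subgroup, so we must have $\charp(\F_q)=2$, i.e.\ $q$ is even. Now $|G|=\deg(f)=2^r$ is a power of $\charp(\F_q)$, so condition (1) of Proposition~\ref{add} holds and therefore $f(X)$ is equivalent to an additive polynomial in $\F_q[X]$, as desired.

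The hard part will be the reduction carried out in the second paragraph: ruling out every non-affine primitive group of degree $2^r$ and every affine group with nontrivial stabilizer. In degree $8$ this is a short finite verification, but in degrees $32$ and $128$ it requires organizing the list of primitive groups of these degrees and applying the genus-$0$ and exceptionality constraints uniformly, drawing on the genus-$0$ and exceptionality literature. This is precisely the step that restricts the clean conclusion to the exponents $r\in\{3,5,7\}$, where the primitive groups of degree $2^r$ remain sufficiently limited.
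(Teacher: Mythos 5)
Your skeleton is the same as the paper's: separability via Lemma~\ref{indecinsep}, the monodromy set-up and constraints from Lemma~\ref{basics} (including your correct observation that exceptionality forces $G\ne A$), the reduction to $G\cong (C_2)^r$, and then the endgame --- Lemma~\ref{PGL} to identify the Galois closure with a rational function field, Lemma~\ref{2gps} to force $\charp(\F_q)=2$, and Proposition~\ref{add} to conclude --- is exactly the paper's argument, carried out correctly.

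The one substantive difference, and the place where your write-up is incomplete, is the step you yourself flag as ``the hard part'': showing that among all primitive groups $A$ of degree $2^r$ with $r\in\{3,5,7\}$, the conditions of Lemma~\ref{basics} admit only $G\cong (C_2)^r$. You leave this unexecuted and propose to attack it with genus-$0$ (Riemann--Hurwitz) constraints on ramification plus the exceptionality literature. The paper needs none of that machinery: it disposes of this step by a direct Magma enumeration over the database of primitive groups of degrees $8$, $32$, and $128$, testing only the purely group-theoretic conditions you already have in hand --- $G$ a transitive normal subgroup of $A$ with $1<\abs{G}<\abs{A}$, $A/G$ cyclic, and the point stabilizers $A_1$, $G_1$ having exactly one common orbit --- with $A_n$ and $S_n$ excluded for the two larger degrees by the short argument that a transitive normal subgroup with cyclic quotient must contain $A_n$, whence $A_1$ and $G_1$ share the two orbits $\{1\}$ and its complement (this is the content of Lemma~\ref{AnSn}, proved later in the paper). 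So your plan is viable but heavier than necessary: as written, the decisive classification of the pairs $(A,G)$ remains an assertion rather than a proof, but the gap is exactly fillable by the finite verification you anticipate, and that verification requires no ramification theory at all.
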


\begin{proof}
Since $f(X)$ is indecomposable, it must be separable by Lemma~\ref{indecinsep}.  Let $A$ and $G$ be the arithmetic and geometric monodromy groups of $f(X)$, so Lemma~\ref{basics} implies that $A$ is a primitive subgroup of $S_n$ with $n:=2^r$, $G$ is a transitive normal subgroup of $A$ with cyclic quotient, and the one-point stabilizers $A_1$ and $G_1$ have a unique common orbit.  By testing all normal subgroups $G$ of all primitive subgroups $A$ of $S_n$, we find that the only possibility is that $G\cong (C_2)^r$.  If $\charp(\F_q)=2$ then 
Proposition~\ref{add} implies that $f(X)$ is equivalent to an additive polynomial.  Henceforth assume $\charp(\F_q)>2$.
Let $x$ be transcendental over $\F_q$, write $t:=f(x)$, let $\Omega$ be the Galois closure of $\F_q(x)/\F_q(t)$, and let $\F_Q$ be the algebraic closure of $\F_q$ in $\Omega$, where $Q=q^k$.  
Lemma~\ref{PGL} implies $\Omega=\F_Q(x)$, so $G$ is a subgroup of $\Aut_{\F_Q}(\F_Q(x))$, which in turn embeds into $\Aut_{\overline\F_Q}(\bar\F_Q(x))$.  But $G\cong (C_2)^r$ with $r>2$, contradicting Lemma~\ref{2gps}.
\end{proof}

\begin{rmk}
We do not know whether Proposition~\ref{even} remains true for larger values of $r$.  Our proof for $r\le 7$ does not by itself imply the result for $r=9$ or $r=11$, since in those cases there exist groups $A$ and $G$ satisfying the conditions used in the proof, but for which $G$ is not $(C_2)^r$.
\end{rmk}

\section{Permutation rational functions of degree $4$}
\label{sec4}

In this section we prove Theorem~\ref{intro4}.  

\begin{lemma}\label{comp}
Suppose $q$ is odd and $f(X)\in\bar\F_q(X)$ is a separable degree-$4$ rational function whose geometric monodromy group $G$ is isomorphic to $(C_2)^2$.  Then $f=\mu\circ (X^2+X^{-2})\circ\nu$ for some degree-one $\mu,\nu\in\bar\F_q(X)$.
The branch points of $f(X)$ are $\mu(\infty)$, $\mu(2)$, and $\mu(-2)$, each of which has exactly two $f$-preimages.  
\end{lemma}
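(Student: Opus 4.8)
The plan is to identify $f$---up to the degree-one changes of variable $\mu$ and $\nu$---with the explicit function $g(X):=X^2+X^{-2}$ by a fixed-field computation, and then read off the branch points from the elementary arithmetic of $g$. Since $K:=\bar\F_q$ is algebraically closed, the algebraic closure of $K$ in the Galois closure $\Omega$ of $K(x)/K(f(x))$ is $K$ itself, so the distinction between arithmetic and geometric monodromy disappears. Because $\abs{G}=4=\deg(f)$, Lemma~\ref{PGL} gives $\Omega=K(x)$; hence $K(x)/K(f(x))$ is Galois with group $G$, where $G$ is a subgroup of $\Aut_K(K(x))$ fixing $f(x)$, and Lemma~\ref{aut} yields $K(x)^G=K(f(x))$.

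Next I would pin down $G$ up to conjugacy. As $q$ is odd we have $\charp(K)\ne 2$, so Lemma~\ref{2gps} applies and shows that $G$ is conjugate in $\Aut_K(K(x))$ to $H:=\langle\sigma,\tau\rangle$, where $\sigma\colon x\mapsto -x$ and $\tau\colon x\mapsto 1/x$. A direct check shows that $g(x)=x^2+x^{-2}$ is fixed by $\sigma$ and by $\tau$, and since $\deg(g)=4=\abs{H}$, Lemma~\ref{aut} gives $K(x)^H=K(g(x))$. Writing $G=\theta H\theta^{-1}$ for some $\theta\in\Aut_K(K(x))$, the Galois correspondence gives $K(x)^G=\theta\bigl(K(x)^H\bigr)=\theta\bigl(K(g(x))\bigr)$. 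Expressing $\theta$ via Lemma~\ref{aut1}, we have $\theta(g(x))=g(\nu(x))$ for a suitable degree-one $\nu\in K(X)$, so $K(x)^G=K(g(\nu(x)))$. Comparing with $K(x)^G=K(f(x))$, the two generators of this field differ by a degree-one function applied on the outside, that is, $f=\mu\circ g\circ\nu$ for some degree-one $\mu\in K(X)$. This establishes the first assertion.

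The step I expect to require the most care is this transport of the fixed field through the conjugation: one must keep track of the direction of the isomorphism in Lemma~\ref{aut1} (which produces $\nu$, not $\nu^{-1}$) and of the fact that the conjugating automorphism $\theta$ acts on the fixed \emph{field}, so that $\nu$ lands on the inside and $\mu$ on the outside of the composition. Everything else is either a direct invocation of a prior lemma or an elementary computation.

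Finally I would compute the branch points. Since $\mu$ and $\nu$ induce bijections of $\bP^1(K)$, the $f$-preimages of $\beta$ correspond to the $g$-preimages of $\mu^{-1}(\beta)$, so the branch points of $f$ are exactly $\mu$ applied to those of $g$, with the same preimage counts. For $g(X)=(X^4+1)/X^2$ and finite $c$, the equation $g(X)=c$ is $X^4-cX^2+1=0$, a quadratic in $X^2$ whose two roots have product $1$; thus $0$ is never a solution, and there are four distinct roots unless the discriminant $c^2-4$ vanishes, i.e.\ $c=\pm 2$, where $X^4\mp 2X^2+1=(X^2\mp 1)^2$ leaves only two distinct roots. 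At $\infty$ the preimages are the double pole $0$ and the point $\infty$, again two. Hence $g$ has branch points $\infty,2,-2$, each with exactly two preimages, and transporting by $\mu$ gives the stated branch points $\mu(\infty),\mu(2),\mu(-2)$ of $f$, completing the proof.
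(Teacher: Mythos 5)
Your proof is correct, and its main part follows the same route as the paper: invoke Lemma~\ref{PGL} to get $\Omega=\bar\F_q(x)$, use Lemma~\ref{2gps} to conjugate $G$ to $\langle x\mapsto -x,\,x\mapsto 1/x\rangle$, observe that $g(X):=X^2+X^{-2}$ is fixed by this group, and compare fixed fields via Lemma~\ref{aut} to get $f=\mu\circ g\circ\nu$; your bookkeeping (transporting the fixed field through the conjugating automorphism) is equivalent to the paper's (precomposing $f$ with $\nu^{-1}$ and working inside the conjugated group's fixed field). The one place you genuinely diverge is the branch-point computation: the paper argues group-theoretically, noting that the four $\widehat G$-translates of any $\alpha$ share the same $g$-value, so a branch point of $g$ must come from a point fixed by a nonidentity element of $\widehat G$, which pins the preimages of branch points down to $\{0,\infty,1,-1,i,-i\}$ and hence the branch points to $g$ of that set, namely $\{\infty,2,-2\}$; you instead compute directly with the numerator $X^4-cX^2+1$ of $g(X)-c$ and its discriminant $c^2-4$. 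Both are elementary and complete; the paper's version generalizes more readily to other Galois covers (it never needs the explicit quartic), while yours is self-contained arithmetic that makes the multiplicity-two structure at $c=\pm 2$ visible at a glance. Note that your discriminant step silently uses $2\ne -2$, i.e.\ odd characteristic, which is exactly where the hypothesis on $q$ enters that final count.
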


\begin{proof}
Since $\abs{G}=\deg(f)$, if $x$ is transcendental over $\bar\F_q$ then the extension $\bar\F_q(x)/\bar\F_q(f(x))$ is Galois with Galois group $G$.
By Lemma~\ref{2gps}, there is some $\rho\in\Aut_{\overline\F_q}(\bar\F_q(x))$ such that $\widehat G:=\rho^{-1} G\rho$ equals $\langle\sigma,\tau\rangle$, where $\sigma(x)=-x$ and $\tau(x)=1/x$.
Here $\rho(x)=\nu(x)$ for some degree-one $\nu\in\bar\F_q(X)$.
The  fixed field $\bar\F_q(x)^{\widehat G}$ contains both $\widehat f(x):=f(\nu^{-1}(x))$ and
$g(x)$, where $g(X):=X^2+X^{-2}$.  Since $\deg(\widehat f)=\deg(g)=\abs{\widehat G}=[\bar\F_q(x):\bar\F_q(x)^{\widehat G}]$,
Lemma~\ref{aut} implies
$\bar\F_q(x)^{\widehat G}=\bar\F_q(g(x))$.  Thus $\widehat f(X) = \mu\circ g$ for some degree-one $\mu\in\bar\F_q(X)$, so that $f=\mu\circ g\circ\nu$.
For any $\alpha\in\bP^1(\bar\F_q)$, the images of $\alpha$ under the four elements of $\widehat G$ all have the same image under $g(X)$, so that if $g(\alpha)$ is a branch point of $g(X)$ then some nonidentity element of $\widehat G$ fixes $\alpha$, whence either $\alpha=-\alpha$ (so $\alpha\in\{0,\infty\}$) or $\alpha=1/\alpha$ (so $\alpha\in\{1,-1\}$) or $\alpha=-1/\alpha$ (so $\alpha\in\{i,-i\}$ where $i^2=-1$).  Since $g(X)$ induces a surjective map $\bP^1(\bar\F_q)\to\bP^1(\bar\F_q)$, it follows that every branch point of $g(X)$ is in $g(\{0,\infty,1,-1,i,-i\})=\{\infty,2,-2\}$.
Conversely it is clear that each of $\infty$, $2$, and $-2$ has two $g$-preimages.  Since $\deg(\mu)=\deg(\nu)=1$, it follows that
the branch points of $f(X)$ are $\mu(\infty)$, $\mu(2)$, and $\mu(-2)$, which are distinct and which each have two $f$-preimages.
%
%
\end{proof}

\begin{lemma}\label{odd4}
Suppose $q$ is odd and $f_1,f_2\in\F_q(X)$ are degree-$4$ exceptional rational functions. Then there exist exactly three degree-one $\mu\in\F_q(X)$ such that $\mu\circ f_1(X)$ and $f_2(X)$ have the same branch points, and for each such $\mu(X)$ there is exactly one $\nu\in\F_q(X)$ for which $\mu\circ f_1\circ\nu = f_2$.
\end{lemma}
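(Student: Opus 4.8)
The plan is to first pin down the monodromy, then count the maps $\mu$ via their action on branch points, and finally count the maps $\nu$ via a Frobenius fixed-point computation inside the deck group.

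First I would determine the monodromy of a degree-$4$ exceptional $f\in\F_q(X)$ with $q$ odd. Since a degree-$2$ rational function over an odd field is never exceptional (Lemma~\ref{intro2}, together with the fact that any composition factor of an exceptional function is again exceptional, as exceptionality is inherited from injectivity on $\bP^1(\F_{q^\ell})$), such $f$ is indecomposable. Hence by Lemma~\ref{basics} its arithmetic monodromy group $A$ is a primitive subgroup of $S_4$, while its geometric monodromy group $G$ is a transitive normal subgroup with $A/G$ cyclic and $A_1,G_1$ sharing a unique orbit. Checking the two primitive groups $A_4,S_4$ of degree $4$ by hand leaves only $A=A_4$, $G=V_4:=(C_2)^2$, with $A/G\cong C_3$ acting faithfully, hence as a $3$-cycle on the three involutions of $V_4$ (recall $V_4$ is self-centralizing in $A_4$). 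Since $G\cong(C_2)^2$, Lemma~\ref{comp} applies and each $f_i$ has exactly three branch points, which the $q$-power Frobenius permutes (Lemma~\ref{bprat}). This permutation is identified with the conjugation action of a generator of $A/G$ on the three order-$2$ inertia subgroups of $G$; as that action is a $3$-cycle, the branch points of each $f_i$ form a single Frobenius orbit, i.e.\ three conjugate points of\/ $\F_{q^3}\setminus\F_q$.

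Next I would count the $\mu$. Write $\{b_1,b_2,b_3\}$ and $\{c_1,c_2,c_3\}$ for the branch points of $f_1,f_2$, indexed cyclically so that $b_{i+1}=b_i^q$ and $c_{i+1}=c_i^q$. Because precomposition by the degree-one $\nu$ leaves branch points unchanged while postcomposition by $\mu$ maps them by $\mu$, the branch points of $\mu\circ f_1$ equal $f_2$'s exactly when $\mu(\{b_i\})=\{c_j\}$. A degree-one $\mu\in\F_q(X)$ commutes with Frobenius, so $\mu(b_i)^q=\mu(b_i^q)=\mu(b_{i+1})$; thus $\mu$ carries the oriented cycle $(b_1b_2b_3)$ onto an oriented cycle $(c_jc_{j+1}c_{j+2})$ and is determined by the single value $\mu(b_1)\in\{c_1,c_2,c_3\}$. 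Conversely each of these three choices extends by Lemma~\ref{line} to a unique degree-one map over $\bar\F_q$, and one checks it agrees with its Frobenius twist at $b_1,b_2,b_3$ and so lies in $\F_q(X)$. This yields exactly three admissible $\mu$.

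Finally I fix one such $\mu$, set $h_1:=\mu\circ f_1$ and $h_2:=f_2$ (now both degree-$4$ exceptional functions in $\F_q(X)$ with the same branch points), and show there is a unique $\nu\in\F_q(X)$ with $h_1\circ\nu=h_2$. Over $\bar\F_q$, any two solutions differ by a deck transformation of $h_1$, so the solution set is a coset $D_1\nu^*$ of the deck group $D_1:=\{\delta:h_1\circ\delta=h_1\}\cong G\cong V_4$; using Lemma~\ref{comp} to realize the required permutation of branch points shows the coset is nonempty, hence has exactly four elements. Frobenius preserves this set and acts on $D_1$ by the coefficientwise map $\delta\mapsto\delta^{(q)}$, which I would identify with the conjugation action of Frobenius in $A$ on $G$; by the first paragraph this is an order-$3$ automorphism $\psi$ of $V_4$. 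Writing $\nu^{*(q)}=\delta_0\nu^*$ with $\delta_0\in D_1$, a solution $\delta\nu^*$ is Frobenius-fixed iff $(\psi-\mathrm{id})(\delta)=\delta_0^{-1}$ in the $\F_2$-space $V_4$; since $\psi$ has order $3$ it has no nonzero fixed vector, so $\psi-\mathrm{id}$ is invertible and the equation has exactly one solution. Thus there is a unique $\nu\in\F_q(X)$, as required.

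The step I expect to be the main obstacle is the identification of the coefficientwise action $\delta\mapsto\delta^{(q)}$ on $D_1$ with the conjugation action of $A/G$ on $G$ — equivalently, verifying that Frobenius acts through the \emph{same} order-$3$ automorphism on both the branch points and the deck group. This single fact is what simultaneously makes the $\mu$-count equal to $3$ and forces $\psi-\mathrm{id}$ invertible, producing the unique $\nu$. I would verify it using the natural lift of Frobenius to $A$ that fixes $x$ and raises constants to the $q$-th power, which conjugates the automorphism $x\mapsto\delta^{-1}(x)$ to $x\mapsto(\delta^{(q)})^{-1}(x)$.
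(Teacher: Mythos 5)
Your overall architecture is sound and much of it is correct, but there is one genuine gap, and it sits exactly where the paper's own proof does its most delicate work: the \emph{nonemptiness} of the solution set over $\bar\F_q$. You assert that ``using Lemma~\ref{comp} to realize the required permutation of branch points shows the coset is nonempty,'' but Lemma~\ref{comp} does not supply this. That lemma only gives decompositions $h_1=\alpha_1\circ g\circ\beta_1$ and $h_2=\alpha_2\circ g\circ\beta_2$ with $g(X)=X^2+X^{-2}$ and identifies the branch points as $\alpha_i(\Delta)$, $\Delta=\{\infty,2,-2\}$. Equality of the branch-point sets tells you only that $\kappa:=\alpha_1^{-1}\circ\alpha_2$ permutes $\Delta$; to produce $\eta$ with $h_1\circ\eta=h_2$ you must \emph{lift} $\kappa$ through $g$, i.e.\ find a degree-one $\lambda$ with $\kappa\circ g=g\circ\lambda$ (then $\eta=\beta_1^{-1}\circ\lambda\circ\beta_2$ works). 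So you need the fact that every degree-one map permuting $\{\infty,2,-2\}$ lifts through $g$ -- equivalently, that the normalizer of the deck group $\langle -X,\,1/X\rangle$ in $\PGL_2(\bar\F_q)$ surjects onto the full symmetric group of the three branch points. This is true (the normalizer is an octahedral $S_4$, with quotient $S_3$; concretely $(-X)\circ g=g\circ(iX)$ and $\frac{2X+12}{X-2}\circ g=g\circ\frac{X-1}{X+1}$, which generate), but it requires proof and is not a citation to Lemma~\ref{comp}. The paper's proof is structured precisely to minimize this input: it twists $\rho_1,\rho_2$ by powers of Frobenius (which act as $3$-cycles on the branch points of $f_1,f_2$) and then needs only the single identity $-g(iX)=g(X)$ to force $\rho_2=\mu\circ\rho_1$, whence $\theta_1^{-1}\circ\theta_2$ is a solution.

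The rest of your argument is correct, and in two places genuinely different from (and arguably cleaner than) the paper. First, you deduce that the branch points form a single Frobenius orbit from the conjugation action of $A/G$ on the inertia involutions of $V_4$; the paper instead argues elementarily that no branch point can lie in $\bP^1(\F_q)$, since by Lemma~\ref{bprat} Frobenius would then fix both preimages, contradicting injectivity on $\bP^1(\F_q)$. Second, your uniqueness step -- identifying the coefficientwise Frobenius on the deck group with conjugation by the lift of Frobenius fixing $x$ (your verification sketch of this is correct), so that Frobenius acts as an order-$3$ automorphism $\psi$ of $V_4$, and then solving $(\psi-\mathrm{id})(\delta)=\delta_0^{-1}$ uniquely because $\psi-\mathrm{id}$ is invertible -- is a clean Lang-type twisting argument. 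The paper instead derives the order-$3$ action from injectivity on $\bP^1(\F_q)$ (a deck transformation in $\F_q(X)$ fixes $\bP^1(\F_q)$ pointwise, hence equals $X$) and concludes by a field-of-definition count: two distinct solutions cannot both lie in $\F_{q^4}(X)$, forcing Galois orbit sizes $1$ and $3$ on the four-element solution set. So the obstacle you flagged at the end is not the real one; the real one is the existence step above, which you should fill with either the normalizer computation or the paper's Frobenius-twist normalization.
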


\begin{proof}
Note that $f_1(X)$ is separable since $\charp(\F_q)\nmid\deg(f_1)$.  Let $A$ and $G$ be the arithmetic and geometric monodromy groups of $f_1(X)$, so Lemma~\ref{basics} implies $A$ is a subgroup of $S_4$, and $G$ is a transitive normal subgroup of $A$ with cyclic quotient, and the one-point stabilizers $A_1$ and $G_1$ have a unique common orbit.  It is easy to check that the only possibility is $A=A_4$ and $G=(C_2)^2$, so that $[A:G]=3$.  By Lemma~\ref{comp}, $f_1(X)$ has exactly three branch points, each of which has exactly two $f_1$-preimages in $\bP^1(\bar\F_q)$, and $f_1=\rho_1\circ g\circ\theta_1$ for some degree-one $\rho_1,\theta_1\in\bar\F_q(X)$, where $g(X):=X^2+X^{-2}$.  Since $f_1(X)$ is exceptional, in particular $f_1(X)$ permutes $\bP^1(\F_q)$.  If a branch point of $f_1(X)$ is in $\bP^1(\F_q)$ then it has two preimages in $\bP^1(\bar\F_q)$ and exactly one preimage in $\bP^1(\F_q)$; but this is impossible since the set of $f_1$-preimages of any element of $\bP^1(\F_q)$ is preserved by the $q$-th power map.  Hence no branch point of $f_1(X)$ is in $\bP^1(\F_q)$.  Since the $q$-th power map preserves the set of branch points of $f_1(X)$, it follows that the three branch points of $f_1(X)$ are $\alpha,\alpha^q,\alpha^{q^2}$ for some $\alpha\in\F_{q^3}\setminus\F_q$.  Likewise
$f_2=\rho_2\circ g\circ\theta_2$ and the branch points of $f_2(X)$ are $\beta,\beta^q,\beta^{q^2}$ for some $\beta\in\F_{q^3}\setminus\F_q$.  
Thus for any degree-one $\mu(X)\in\F_q(X)$ such that $\mu\circ f_1(X)$ and $f_2(X)$ have the same branch points, we must have $\mu(\alpha)=\beta^{q^j}$ for some $j\in\{0,1,2\}$, and then $\mu(\alpha^q)=\mu(\alpha)^q=\beta^{q^{j+1}}$ and likewise $\mu(\alpha^{q^2})=\beta^{q^{j+2}}$.
Conversely, for each $j\in\{0,1,2\}$ there is a unique degree-one $\mu_j(X)\in\bar\F_q(X)$ such that $\mu_j(\alpha^{q^i})=\beta^{q^{i+j}}$ for each $i\in\{0,1,2\}$.
Write $\mu_j(X)^q=\mu_j^{(q)}(X^q)$ where $\mu_j^{(q)}(X)$ is obtained from $\mu_j(X)$ by raising all coefficients to the $q$-th power.  Then
$\mu_j^{(q)}(\alpha^{q^i})=\mu_j(\alpha^{q^{i-1}})^q=\beta^{q^{i+j}}$, so that $\mu_j^{(q)}(X)$ and $\mu_j(X)$ take the same values as one another at each of the three elements $\alpha^{q^i}$.  It follows that $\mu_j^{(q)}(X)=\mu_j(X)$, so that $\mu_j(X)\in\F_q(X)$.  Here $\mu_j\circ f_1(X)$ and $f_2(X)$ have the same branch points, so the three functions $\mu_j(X)$ comprise all degree-one $\mu(X)\in\F_q(X)$ for which $\mu\circ f_1(X)$ and $f_2(X)$ have the same branch points.

Fix $j$ and put $\mu:=\mu_j$. 
Let $\Delta:=\{\infty,2,-2\}$ be the set of branch points of $g(X)$.
The $q$-th power map transitively permutes the set of branch points of $f_1(X)$, which is $\rho_1(\Delta)$ since $f_1(X)=\rho_1\circ g\circ\theta_1(X)$.
Upon replacing $\rho_1$ and $\theta_1$ by $\rho_1^{(q^\ell)}$ and $\theta_1^{(q^\ell)}$ for a suitable $\ell$, we may assume that $\rho_1(\infty)=\alpha$; note that this replacement preserves the identity
$f_1=\rho_1\circ g\circ \theta_1$, since $f_1^{(q^\ell)}=f_1$ and $g^{(q^\ell)}=g$.   Further, since
$-g(iX)=g(X)$ when $i^2=-1$, we may replace $\rho_1$ and $\theta_1$ by $\rho_1(-X)$ and $i\theta_1(X)$ if necessary in order to assume that $\rho_1(2)=\alpha^q$.
Likewise, we may assume that $\rho_2(\infty)=\mu(\alpha)$ and $\rho_2(2)=\mu(\alpha^q)$, so that $\rho_2(\delta)=\mu(\rho_1(\delta))$ for each $\delta\in\Delta$.  Since $\rho_2(X)$ and $\mu(\rho_1(X))$ are degree-one rational functions which agree at three points, we have $\rho_2(X)=\mu(\rho_1(X))$, whence
\[
f_2\circ\theta_2^{-1} = \rho_2\circ g =
\mu \circ f_1\circ\theta_1^{-1}.
\]
Let $S$ be the set of all degree-one $\eta\in\bar\F_q(X)$ for which $\mu\circ f_1\circ\eta=f_2$.  Then $S$ contains $\theta:=\theta_1^{-1}\circ\theta_2$, and 
$\Gal(\bar\F_q(X)/\F_q(X))$ permutes $S$.
Moreover, for any degree-one $\eta\in\bar\F_q(X)$ we have $\eta\in S$ if and only if $f_1\circ\theta\circ\eta^{-1}=f_1$, or equivalently $\theta\circ\eta^{-1}\in G$.
In particular, we have $\abs{S}=4$.
If $\eta(X)\in G$ lies in $\F_q(X)$ then, since $f_1\circ\eta=f_1$, injectivity of $f_1(X)$ on $\bP^1(\F_q)$ implies that $\eta(X)$ fixes each element of $\bP^1(\F_q)$, so that $\eta(X)$ has at least four fixed points which implies $\eta(X)=X$.  Since $\Gal(\bar\F_q(X)/\F_q(X))$ permutes $G$, and $G$ is a four-element set with a unique fixed point under this map, it follows that the three nonidentity elements of $G$ comprise a single orbit under this map, and are in $\F_{q^3}(X)\setminus\F_q(X)$.
For any $\eta\in S\setminus\{\theta\}$ we know that $\theta\circ\eta^{-1}$ is a nonidentity element of $G$, and hence is in $\F_{q^3}(X)\setminus\F_q(X)$.  Thus at least one of $\theta$ or $\eta$ is not in $\F_{q^4}(X)$.  Since $\Gal(\bar\F_q(X)/\F_q(X))$ permutes the four-element set $S$, it follows that the orbits of this action have sizes $1$ and $3$, and the size-$1$ orbit consists of the unique $\nu\in\F_q(X)$ for which $\mu\circ f_1\circ\nu=f_2$.
\end{proof}

\begin{table}[!htbp]\label{tab}
\caption{Non-exceptional degree-$4$ permutation rational functions over $\F_q$ (the stabilizer size is defined in Proposition~\ref{FM2})}
\renewcommand{\arraystretch}{2.5}
\begin{tabular}{|c|c|c|c|}
\hline
$q$&$f(X)$&Conditions&Stabilizer size \\ \hline\hline

$8$&$\displaystyle{\frac{X^4+\alpha X^3+X}{X^2+X+1}}$&$\alpha^3+\alpha=1$&6 \\ \hline

$7$&$X^4+3X$&&3\\ \hline

\multirow{2}{*}{$5$}&$\displaystyle{\frac{X^4+X+1}{X^2+2}}$&&1 \\
&$\displaystyle{\frac{X^4+X^3+1}{X^2+2}}$&&3 \\ \hline

\multirow{3}{*}{$4$}&$\displaystyle{\frac{X^4+\omega X}{X^3+\omega^2}}$&\multirow{3}{*}{$\omega^2+\omega=1$}&6\\
&$\displaystyle{\frac{X^4+X^2+X}{X^3+\omega}}$&&2\\
&$\displaystyle{\frac{X^4+\omega X^2+X}{X^3+X+1}}$&&2\\
\hline

\multirow{3}{*}{$3$}&$X^4-X^2+X$&&1\\
&$\displaystyle{\frac{X^4+X+1}{X^2+1}}$&&1\\
&$\displaystyle{\frac{X^4+X^3+1}{X^2+1}}$&&3\\ \hline

\multirow{2}{*}{$2$}&$X^4+X^3+X$&&1\\
&$\displaystyle{\frac{X^4+X^3+X}{X^2+X+1}}$&&2\\ \hline

\end{tabular}
\end{table}

\begin{proof}[Proof of Theorem~\ref{intro4}]
Pick $f(X)\in\F_q(X)$ of degree four.
If $f(X)$ is inseparable then $\charp(\F_q)=2$, and Lemma~\ref{intro2} implies $f(X)$ permutes $\bP^1(\F_q)$ if and only if $f(X)$ is equivalent to $X^2\circ\mu\circ X^2$ for some degree-one $\mu\in\F_q(X)$.  Since $X^2\circ\mu=\nu\circ X^2$ for some degree-one $\nu\in\F_q(X)$, this says $f(X)$ is equivalent to $X^4$.  Henceforth assume $f(X)$ is separable.

If $f(X)$ permutes $\bP^1(\F_q)$ but $f(X)$ is not exceptional, then Proposition~\ref{introbd} implies $q\le 81$.  For $q\le 81$, a computer search shows that 
Table~1 contains representatives for all equivalence classes of non-exceptional degree-4 bijective rational functions.

It remains to determine the exceptional functions $f(X)$.  Let $A$ and $G$ be the arithmetic and geometric monodromy groups of $f(X)$, so Lemma~\ref{basics} implies $A$ is a subgroup of $S_4$, and $G$ is a transitive normal subgroup of $A$ with cyclic quotient, where the one-point stabilizers $A_1$ and $G_1$ have a unique common orbit.  By inspection, the only possibility is $A=A_4$ and $G=(C_2)^2$.  If $q$ is even then Proposition~\ref{add} implies $f(X)$ is equivalent to an additive polynomial, and by Lemma~\ref{addep} this polynomial is exceptional just when it has no roots in $\F_q^*$.

Henceforth assume $q$ is odd.
By Lemma~\ref{odd4}, there is at most one
equivalence class of degree-$4$ exceptional rational functions over $\F_q$, so it remains
only to show that the functions in (1) of Theorem~\ref{intro4} are exceptional.
Pick $\alpha,\beta\in\F_q$ for which $X^3+\alpha X+\beta$ is irreducible, and put
\[ f(X):=\frac{X^4-2\alpha X^2-8\beta X+\alpha^2}{X^3+\alpha X+\beta}.\]
Let $\gamma_1,\gamma_2,\gamma_3$ be the roots  of $X^3+\alpha X+\beta$, so that $\gamma_i\in\F_{q^3}\setminus\F_q$.
Then the numerator of $f(X)-f(Y)$ is
\[
(X-Y) \prod_{i=1}^3 \bigl(XY-\gamma_i(X+Y)-\alpha-2\gamma_i^2\bigr).
\]
Since each of the above factors has $Y$-degree $1$, and plainly the numerator of $f(X)-f(Y)$ has no factor of the form $Y-\delta$ with $\delta\in\bar\F_q$, we see that each factor is irreducible in $\bar\F_q[X,Y]$.
But none of the factors other than $X-Y$ is a constant multiple of a polynomial in $\F_q[X,Y]$, so that $f(X)$ is exceptional by Lemma~\ref{basics}.
\end{proof}

\begin{rmk}
The polynomials $f(X)$ in case (1) of Theorem~\ref{intro4} have the unusual property that the branch points of $g(X):=f(X)/4$ are precisely the three elements of $g^{-1}(\infty)\setminus\{\infty\}$, namely the three roots of $X^3+\alpha X+\beta$.
\end{rmk}

\begin{rmk}
Cases (1) and (2) of Theorem~\ref{intro4} may be combined into a single case which covers both even and odd characteristic.  Namely, it is easy to deduce from Theorem~\ref{intro4} that a separable $f(X)\in\F_q(X)$ of degree $4$ is exceptional if and only if $f(X)$ is equivalent to
\[
\frac{X^4-2\alpha X^2-8\beta X+\alpha ^2}{X^3+\alpha X+\beta}
\]
for some $\alpha,\beta\in\F_q$ such that $X^3+\alpha X+\beta$ is irreducible in $\F_q[X]$.
\end{rmk}


\section{The number of degree-$4$ permutation rational functions}\label{seccount}

In this section we answer \cite[Problems~9.1 and 9.2]{FM},
which for each $q$ ask for the number of equivalence classes of degree-$4$ permutation rational functions over $\F_q$, an explicit representative for each class, and the total number of degree-$4$ permutation rational functions over $\F_q$.

\begin{prop} The number of equivalence classes of exceptional $f(X)\in\F_q(X)$ of degree $4$ is
\begin{enumerate}
\item $1$ if $q$ is odd,
\item $(q+4)/3$ if $q\equiv 2\pmod{6}$,
\item $(q+8)/3$ if $q\equiv 4\pmod{6}$.
\end{enumerate}
If $q$ is odd then any function in (1) of Theorem~\ref{intro4} represents the unique equivalence class.  If $q$ is even then a system of distinct representatives for the classes consists of $X^4$, all polynomials $X^4+X^2+\alpha X$ with $\alpha\in\F_q\setminus\{\beta^3+\beta\colon\beta\in\F_q\}$, and if $q\equiv 4\pmod{6}$ then in addition $X^4+\gamma X$ and $X^4+\gamma^2 X$ for a single prescribed non-cube $\gamma\in\F_q^*$.  The number of equivalence classes of non-exceptional degree-$4$ permutation rational functions $f(X)\in\F_q(X)$ is
\begin{enumerate}
\item $0$ if $q>8$,
\item $1$ if $q=7$,
\item $2$ if $q\in\{2,5\}$,
\item $3$ if $q\in\{3,8\}$,
\item $5$ if $q=4$.
\end{enumerate}
Representatives for the distinct classes are all the entries in Table~1, using all possible values for $\alpha$ and $\omega$, except that for each of the two choices of $\omega$ the third entry for $q=4$ yields the same equivalence class.
\end{prop}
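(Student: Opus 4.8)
The plan is to separate the exceptional from the non-exceptional classes, and within the exceptional case to split on the parity of $q$. For odd $q$ the result is immediate: Theorem~\ref{intro4} combined with Lemma~\ref{odd4} already shows there is a single equivalence class of exceptional degree-$4$ functions, represented by any function in (1) of Theorem~\ref{intro4}. So all the work is for $q$ even, where Theorem~\ref{intro4} tells us that the exceptional degree-$4$ functions are exactly those equivalent to additive polynomials $X^4+\alpha X^2+\beta X$, and by Lemma~\ref{addep} such a polynomial is exceptional precisely when it has no root in $\F_q^*$. The first substantive step is to show that \emph{among additive polynomials} equivalence reduces to scaling. Since a separable additive polynomial is totally ramified only over $\infty$, any degree-one $\mu,\nu$ with $\mu\circ L_1\circ\nu=L_2$ (both additive) must fix $\infty$ and hence be affine; additivity of $L_1$ and the normalization $L_2(0)=0$ then force $L_2(X)=uL_1(dX)$ for some $u,d\in\F_q^*$. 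Passing to monic representatives $X^4+bX^2+cX$, the residual freedom is therefore the $\F_q^*$-action $(b,c)\mapsto(bd^{-2},cd^{-3})$.

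Next I would isolate the degenerate case $c=0$: exceptionality forces $b=0$ as well (otherwise $b^{1/2}\in\F_q^*$ is a root), giving the single class of $X^4$. The heart of the count is the case $c\ne0$, and the key observation is the factorization $X^4+bX^2+cX=X(X^3+bX+c)$, so that such a polynomial is exceptional exactly when $X^3+bX+c$ has no root in $\F_q$, i.e.\ is irreducible over $\F_q$. I would then count admissible pairs $(b,c)$ by counting monic irreducible cubics with vanishing $X^2$-coefficient: such a cubic has a root $\theta\in\F_{q^3}\setminus\F_q$ with $\Tr_{\F_{q^3}/\F_q}(\theta)=0$, and because $\charp(\F_q)=2$ makes the trace restrict to the identity on $\F_q$, exactly $q^2-1$ such $\theta$ exist, yielding $(q^2-1)/3$ admissible pairs. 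The subfamily with $b=0$ corresponds to irreducible $X^3+c$, which contributes nothing when $3\nmid q-1$ and contributes the $2(q-1)/3$ non-cubes $c$ when $3\mid q-1$.

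Finally I would pass to orbits. Stabilizers are trivial when $b\ne0$ (orbit size $q-1$) and equal $\{d:d^3=1\}$ when $b=0$ (orbit size $(q-1)/\gcd(3,q-1)$). Dividing the admissible counts by the orbit sizes, and adding the one $X^4$ class, gives $(q+1)/3+1=(q+4)/3$ when $q\equiv2\pmod6$ and $(q-1)/3+2+1=(q+8)/3$ when $q\equiv4\pmod6$. Reading off the unique monic representatives — $X^4+X^2+\alpha X$ in the $b\ne0$ orbits (with $\alpha$ ranging over $\F_q\setminus\{\beta^3+\beta:\beta\in\F_q\}$, since $b$ normalizes uniquely to $1$ and the admissible $\alpha$ are exactly the non-values of $P(\beta)=\beta^3+\beta$) and $X^4+\gamma X,\,X^4+\gamma^2X$ for a non-cube $\gamma$ in the two $b=0$ orbits — reproduces the stated system of representatives. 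The main obstacle in this part is the clean identification of exceptionality with irreducibility of the depressed cubic together with the trace count; both rely on computing correctly in characteristic $2$, and it is this reformulation that turns an apparently delicate orbit enumeration into a short calculation.

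For the non-exceptional classes, Theorem~\ref{intro4} confines everything to $q\le8$ and to the functions of Table~1, which the computer search in the proof of Theorem~\ref{intro4} already certifies to be a complete and nonredundant list of class representatives. It then remains only to count the classes for each $q$ by substituting all admissible parameter values: over $\F_8$ the three roots of $\alpha^3+\alpha=1$ give three classes, over $\F_4$ the two admissible values of $\omega$ double the first two table entries while yielding a single class for the third entry, and the remaining fields are read off directly, producing the counts $0,1,2,2,3,3,5$. The only point requiring genuine care is verifying that the two $q=4$ functions arising from the third row coincide as an equivalence class while the remaining listed functions are pairwise inequivalent; this can be confirmed directly.
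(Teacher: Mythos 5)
Your proof is correct, and while it follows the paper's overall skeleton --- Theorem~\ref{intro4} plus Lemma~\ref{odd4} disposing of odd $q$, reduction to additive polynomials for even $q$ with the inseparable case $X^4$ split off, and deference to computation for the non-exceptional classes --- the core of your even-$q$ count takes a genuinely different route. Both you and the paper use the same ramification argument (for a separable additive polynomial, $\infty$ is the unique totally ramified point and every finite point has four distinct preimages, so $\mu$ and $\nu$ must fix $\infty$ and be affine) to reduce equivalence of monic separable additive polynomials to the scaling action $(b,c)\mapsto(bd^{-2},cd^{-3})$. From there the paper normalizes the degree-$2$ coefficient to lie in $\{0,1\}$, verifies directly that distinct normalized representatives are inequivalent, and \emph{cites} \cite[Prop.~4.6]{Tval} for the count $\abs{\Delta}=\lfloor(q+1)/3\rfloor$. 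You instead run a full orbit--stabilizer computation: you identify exceptionality of $X(X^3+bX+c)$ with irreducibility of $X^3+bX+c$, count the $(q^2-1)/3$ monic irreducible depressed cubics by counting trace-zero elements of $\F_{q^3}\setminus\F_q$ (using that in characteristic $2$ the trace restricts to the identity on $\F_q$), compute stabilizers, and divide. This makes the count self-contained --- your argument \emph{derives} the value of $\abs{\Delta}$ as a by-product rather than quoting it --- at the cost of a somewhat longer calculation; the paper's route is shorter but leans on the external reference. One minor wording slip: you describe the Table~1 data as certified ``complete and nonredundant'' by the computer search, which is in tension with the redundancy you then correctly handle (the third $q=4$ entry gives the same class for both choices of $\omega$); since your final counts and the verification you flag for $q=4$ are right, this does not affect correctness.
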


\begin{proof}
The non-exceptional case follows from Theorem~\ref{intro4} and routine computations.
The exceptional case for $q$ odd follows from Theorem~\ref{intro4} and Lemma~\ref{odd4}.  Now suppose $q$ is even.  By Theorem~\ref{intro4}, each exceptional $f(X)\in\F_q(X)$ of degree $4$ is equivalent to an additive polynomial $L(X)$ with no roots in $\F_q^*$.  If $L(X)$ is inseparable then $L=h(X)\circ X^2$ for some $h(X)\in\F_q[X]$ of degree $2$, and plainly $h(0)=0$; since $L(X)$ has no roots in $\F_q^*$, also $h(X)$ has no roots in $\F_q^*$, so that $h(X)$ is a monomial and thus $L(X)$ is equivalent to $X^4$.  If $L(X)$ is separable then, by composing on both sides with polynomials of the form $\delta X$ with $\delta\in\F_q^*$, we see that $L(X)$ is equivalent to either $X^4+X^2+\alpha X$ with $\alpha\in\F_q^*$ or $X^4+\gamma X$ with $\gamma$ varying over a set of coset representatives for $\F_q^*/(\F_q^*)^3$.  The condition that these polynomials have no roots in $\F_q^*$ says that $\alpha\in \Delta:=\F_q\setminus\{\beta^3+\beta\colon\beta\in\F_q\}$ and $\gamma\notin(\F_q^*)^3$.  Here $\abs{\Delta}=\lfloor{(q+1)/3}\rfloor$ by \cite[Prop.~4.6]{Tval}.  It remains only to show that equivalent separable degree-$4$ monic additive
polynomials $f,g$ with degree-$2$ coefficient in $\{0,1\}$ are equal.  So suppose that $g=\mu\circ f\circ\nu$ for some degree-one $\mu,\nu\in\F_q(X)$.  Since $g^{-1}(\infty)=\{\infty\}=f^{-1}(\infty)$, and any element of $\bar\F_q$ has four distinct preimages under each of $f(X)$ and $g(X)$, we see that both $\mu(X)$ and $\nu(X)$ must fix $\infty$, and hence must be degree-one polynomials.  If neither $f(X)$ nor $g(X)$ has a degree-$2$ term then the ratio of their degree-$1$ coefficients is a cube so $f(X)=g(X)$.  If at least one of $f(X)$ or $g(X)$ has a degree-$2$ term then equating coefficients of $X^4$ and $X^2$ shows that $\nu(X)$ and $\mu(X)$ are monic, and then since $f(X)$ and $g(X)$ are additive it follows that $f(X)=g(X)$.
\end{proof}

Problem 9.2 in \cite{FM} asks for an explicit formula for the number of degree-$4$ permutation rational functions over $\F_q$.  To this end, let $\Gamma$ be the group of degree-one rational functions over $\F_q$ under the operation of functional composition.  Then the equivalence classes of nonconstant rational functions over $\F_q$ are precisely the orbits of $\Gamma\times \Gamma$ on the set $\F_q(X)\setminus\F_q$ under the group action $(\mu(X),\nu(X))\colon g(X)\mapsto \mu\circ g\circ\nu^{-1}$.
By the orbit-stabilizer theorem, in order to compute the size of an equivalence class it suffices to compute the size of the stabilizer of any prescribed element of the class.

\begin{prop}\label{FM2}
If $f(X)\in\F_q(X)$ is a degree-$4$ permutation rational function, then the number of pairs $(\mu(X),\nu(X))$ of degree-one rational functions in\/ $\F_q(X)$ for which $\mu\circ f\circ\nu=f$ is as follows:
\begin{enumerate}
\item $3$ if $q$ is odd and $f(X)$ is exceptional,
\item $q$ if $q$ is even and $f(X)$ is equivalent to an additive polynomial having a degree-$2$ term,
\item $3q$ if $q\equiv 4\pmod{6}$ and $f(X)$ is equivalent to $X^4+\gamma X$ with $\gamma\in\F_q^*$,
\item $q^3-q$ if $q$ is even and $f(X)$ is equivalent to $X^4$,
\item the stabilizer size listed in the corresponding entry of Table~\emph{1}, if $f(X)$ is equivalent to a rational function in Table~\emph{1}.
\end{enumerate}
\end{prop}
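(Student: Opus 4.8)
The plan is to compute, for each of the five families, the size of the stabilizer
\[
\Stab(f):=\{(\mu,\nu):\mu,\nu\in\Gamma,\ \mu\circ f\circ\nu=f\},
\]
working with a convenient representative in each case, and to exploit that $\abs{\Stab(f)}$ is an equivalence invariant. Indeed, if $f'=\alpha\circ f\circ\beta$ with $\alpha,\beta\in\Gamma$, then $(\mu,\nu)\mapsto(\alpha\circ\mu\circ\alpha^{-1},\,\beta^{-1}\circ\nu\circ\beta)$ is a bijection $\Stab(f)\to\Stab(f')$, so $\abs{\Stab(f)}=\abs{\Stab(f')}$. By Theorem~\ref{intro4} every degree-$4$ permutation rational function is equivalent to the function in (1) of Theorem~\ref{intro4} (when $q$ is odd and $f$ is exceptional), to an additive polynomial (when $q$ is even and $f$ is exceptional), or to an entry of Table~1 (non-exceptional, $q\le 8$). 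The even exceptional representatives split into $X^4$ (inseparable), a separable additive polynomial with a degree-$2$ term, or $X^4+\gamma X$ with $\gamma\in\F_q^*$; the last is a permutation only when $\gamma$ is a non-cube, which forces $q\equiv4\pmod 6$. Thus the five listed cases are exhaustive, and the fact that they yield distinct stabilizer sizes will confirm that they are mutually exclusive.

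For case (1) I would apply Lemma~\ref{odd4} with $f_1=f_2=f$. Any $(\mu,\nu)\in\Stab(f)$ satisfies $\mu\circ f\circ\nu=f$; since precomposing with the degree-one $\nu$ leaves the branch locus of $f\circ\nu$ equal to that of $f$, while postcomposing with $\mu$ transports branch points by $\mu$, the functions $\mu\circ f$ and $f$ share the same branch points. Lemma~\ref{odd4} says there are exactly three such $\mu$, and each extends to a unique $\nu$, so $\abs{\Stab(f)}=3$.

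Cases (2)--(4) are direct computations over the representative. The key reduction is that, given $\nu$, at most one $\mu$ satisfies $\mu\circ f\circ\nu=f$, since the left factor in $\mu\circ h=f$ with $h:=f\circ\nu$ non-constant is unique; hence $\abs{\Stab(f)}$ equals the number of admissible $\nu$. For $f=X^4=X^{p^2}$ with $p=2$, Frobenius gives $X^4\circ\nu=\nu^{(4)}\circ X^4$, where $\nu^{(4)}$ is $\nu$ with each coefficient raised to the $4$th power; this $\nu^{(4)}$ again lies in $\Gamma$, so $\mu\circ X^4\circ\nu=X^4$ forces $\mu=(\nu^{(4)})^{-1}$, every $\nu\in\Gamma$ is admissible, and $\abs{\Stab(X^4)}=\abs{\Gamma}=\abs{\PGL_2(\F_q)}=q^3-q$, giving (4). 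For the separable additive representatives, $\infty$ is the unique branch point and is totally ramified (the only pole, of order $4$); since degree-one pre- and postcomposition carry total ramification to total ramification, $\mu\circ f\circ\nu=f$ forces $\nu^{-1}(\infty)=\infty=\mu(\infty)$, so $\mu(X)=aX+b$ and $\nu(X)=cX+d$ are degree-one polynomials. Expanding via $(cX+d)^{2^i}=c^{2^i}X^{2^i}+d^{2^i}$ and equating coefficients with $f$, I would solve the resulting system: for $f=X^4+X^2+\alpha X$ it yields $c=a=1$, $d$ free, and $b=f(d)$, hence $q$ pairs and (2); for $f=X^4+\gamma X$ it yields $c^3=1$ with $a=c^{-1}$, $d$ free, $b$ determined, and since $q\equiv1\pmod 3$ there are three cube roots of unity $c\in\F_q$, hence $3q$ pairs and (3).

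Finally, case (5) is settled by the same finite search that produced Table~1: for each $q\le 8$ and each listed representative $f$, one enumerates $\Gamma\times\Gamma$ and counts the pairs fixing $f$, recording the value in the ``stabilizer size'' column, and equivalence-invariance extends this to every $f$ equivalent to a Table~1 entry. I expect the main obstacle to be the bookkeeping in the separable additive case---specifically the clean justification that $\mu$ and $\nu$ must fix $\infty$ (via total ramification of the unique branch point) before the coefficient comparison, together with keeping the characteristic-$2$ identities straight---rather than anything conceptually deep; case (1), by contrast, falls out immediately from Lemma~\ref{odd4}.
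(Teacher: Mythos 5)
Your proposal is correct and follows essentially the same route as the paper: equivalence-invariance of the stabilizer count, Lemma~\ref{odd4} with $f_1=f_2=f$ in the odd exceptional case, reduction to $\mu,\nu$ fixing $\infty$ followed by coefficient comparison for the even additive cases, and a direct finite computation for the Table~1 entries. The only differences are cosmetic: you justify that $\mu$ and $\nu$ fix $\infty$ via ramification where the paper counts preimages of $\infty$ versus elements of $\F_q$, and you make explicit the Frobenius identity $X^4\circ\nu=\nu^{(4)}\circ X^4$ behind the $q^3-q$ count for $X^4$, a point the paper asserts without detail.
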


\begin{proof}
If $f(X)$ is non-exceptional, and hence is equivalent to a function in Table~1, then this is a simple computation.  Now assume $f(X)$ is exceptional.  
If $q$ is odd then the result follows from Lemma~\ref{odd4} by putting $f_1=f_2=f$.
Henceforth assume $q$ is even, so that $f(X)$ is equivalent to an additive polynomial having no roots in $\F_q^*$.  Since equivalent functions yield the same number of pairs $(\mu(X),\nu(X))$, we may assume that $f(X)$ is a monic additive polynomial.  If $f(X)=X^4$ then for any degree-one $\nu(X)\in\F_q(X)$ there is a unique degree-one $\mu(X)\in\F_q(X)$ such that $\mu\circ f\circ\nu=f$, so there are $q^3-q$ pairs $(\mu,\nu)$.  If $f(X)\ne X^4$ then $f^{-1}(\infty)=\{\infty\}$, while any element of $\F_q$ has at least two distinct $f$-preimages in $\bar\F_q$.  Thus if degree-one $\mu,\nu\in\F_q(X)$ satisfy $\mu\circ f\circ\nu=f$ then both $\mu(X)$ and $\nu(X)$ must fix $\infty$, so that $\mu(X)=\gamma X+\delta$ and $\nu(X)=\alpha X+\beta$ with $\alpha,\gamma\in\F_q^*$ and $\beta,\delta\in\F_q$.  By equating coefficients, we see that $\mu\circ f\circ\nu$ equals $f(X)$ if and only if $\delta=\gamma f(\beta)$, $\gamma=1/\alpha^4$, and $\alpha^{4-j}=1$ for each $j$ occurring as the degree of a term of $f(X)$.  This yields the stated formulas.
\end{proof}

\begin{rmk}
In case $q$ is odd and $f(X)$ is given by (1) of Theorem~\ref{intro4}, one can explicitly describe the pairs $(\mu(X),\nu(X))$ with $\mu\circ f\circ\nu=f$.
Let $\gamma$ be a root of $X^3+\alpha X+\beta$, so that $\gamma_1:= \gamma \in\F_{q^3}\setminus\F_q$, and put $\gamma_2:=\gamma^q$ and $\gamma_3:=\gamma^{q^2}$.  It suffices to describe the pairs with $\mu(4\gamma_1)=4\gamma_2$, since the other pairs are then $(\mu\circ\mu,\,\nu\circ\nu)$ and $(X,X)$.  Put
\[
\mu(X) := 4\frac{-(3\beta+\delta)X+4\alpha^2}{3\alpha X+24\beta-4\delta}
\]
where
\[
\delta:=\gamma_1^2 \gamma_2 + \gamma_2^2 \gamma_3 + \gamma_3^2 \gamma_1 \quad\text{ and }\quad
\epsilon:=\gamma_1 \gamma_2^2 + \gamma_2 \gamma_3^2 + \gamma_3 \gamma_1^2.
\]
Writing $\Tr$ for the trace map from $\F_{q^3}$ to $\F_q$, let $\nu(X)$ be the rational function
\[
\frac{\bigl(-(\epsilon+3\beta)+\Tr((\gamma_1-\gamma_2)^{\frac{3q^2+2q+1}2})\bigr)X + \alpha^2 - \alpha\Tr((\gamma_1-\gamma_2)^{\frac{q^2+2q+1}2})}
{3\alpha X+\delta+3\beta +\Tr((\gamma_1-\gamma_2)^{\frac{q^2+2q+3}2})}. 
\]
Then $\mu(X),\nu(X)\in\F_q(X)$ satisfy $\mu(4\gamma_1)=4\gamma_2$ and $\mu\circ f\circ\nu = f$.
\end{rmk}

Formulas for the number of degree-$4$ permutation polynomials over $\F_q$ follow immediately from the previous two results, via the orbit-stabilizer theorem.

\begin{cor}\label{dumb}
The number of degree-$4$ permutation rational functions over\/ $\F_q$ is as follows:
\begin{enumerate}
\item $(q^3-q)^2/3$ if $q$ odd and $q>7$,
\item $q(q-1)(q+2)(q^3+1)/3$ if $q$ even and $q>8$,
\item $222768$ if $q=8$,
\item $75264$ if $q=7$,
\item $24000$ if $q=5$,
\item $8160$ if $q=4$,
\item $1536$ if $q=3$,
\item $78$ if $q=2$.
\end{enumerate}
\end{cor}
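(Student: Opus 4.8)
The plan is to realize each of the counts in Corollary~\ref{dumb} as a sum of equivalence-class sizes, computing every class size via the orbit-stabilizer theorem. As noted just before Proposition~\ref{FM2}, the equivalence classes of degree-$4$ permutation rational functions over $\F_q$ are exactly the orbits of $\Gamma\times\Gamma$ acting by $(\mu,\nu)\colon g\mapsto\mu\circ g\circ\nu^{-1}$, where $\Gamma$ is the group of degree-one rational functions over $\F_q$. By Lemma~\ref{aut2} we have $\Gamma\cong\PGL_2(\F_q)$, so $\abs{\Gamma}=q^3-q$ and $\abs{\Gamma\times\Gamma}=(q^3-q)^2$. The stabilizer of a function $f$ consists of the pairs $(\mu,\nu)$ with $\mu\circ f\circ\nu^{-1}=f$, and since $\nu\mapsto\nu^{-1}$ is a bijection of $\Gamma$ its size equals the number of pairs with $\mu\circ f\circ\nu=f$, i.e.\ the quantity $s(f)$ computed in Proposition~\ref{FM2}. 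Hence the class of $f$ has size $(q^3-q)^2/s(f)$, and the total number of degree-$4$ permutation rational functions over $\F_q$ is $\sum(q^3-q)^2/s(f)$, the sum running over one representative $f$ of each class.

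It then remains to assemble this sum from the equivalence-class count established earlier in this section together with the stabilizer sizes of Proposition~\ref{FM2}. For $q$ odd with $q>7$, or $q$ even with $q>8$, there are no non-exceptional classes, so only exceptional classes contribute. When $q$ is odd there is a single class, with $s=3$, giving $(q^3-q)^2/3$ and proving~(1). When $q$ is even the exceptional classes are: the class of $X^4$ with $s=q^3-q$; the $\lfloor(q+1)/3\rfloor$ classes of the polynomials $X^4+X^2+\alpha X$ with $s=q$; and, when $q\equiv 4\pmod 6$, two further classes $X^4+\gamma X$ and $X^4+\gamma^2X$ with $s=3q$.

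The substantive point in the even case is that the two residue classes $q\equiv 2$ and $q\equiv 4\pmod 6$ collapse to a single formula. The non-$X^4$ contribution equals $\tfrac{q+1}{3}\cdot\tfrac{(q^3-q)^2}{q}$ in both: for $q\equiv 2\pmod 6$ one has $\lfloor(q+1)/3\rfloor=(q+1)/3$ and no extra classes, while for $q\equiv 4\pmod 6$ one has $\lfloor(q+1)/3\rfloor=(q-1)/3$ but the two extra classes contribute $2\cdot\tfrac{(q^3-q)^2}{3q}$, which is precisely the $\tfrac23\cdot\tfrac{(q^3-q)^2}{q}$ needed to restore the coefficient $\tfrac{q+1}{3}$. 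Thus in every even case the total is
\[
(q^3-q)+\frac{q+1}{3}\cdot\frac{(q^3-q)^2}{q}=q(q-1)(q+1)\left(1+\frac{(q^2-1)(q+1)}{3}\right),
\]
and a short manipulation using $(q+1)(q^3+q^2-q+2)=(q+2)(q^3+1)$ rewrites this as $q(q-1)(q+2)(q^3+1)/3$, proving~(2).

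For the remaining $q\in\{2,3,4,5,7,8\}$ the count is a finite sum of the exceptional contributions (computed as above) and the non-exceptional ones, the latter obtained by letting $f$ range over the Table~1 entries and weighting each by $(q^3-q)^2/s$ for the tabulated stabilizer size $s$. The one place requiring care, and the main source of possible error, is the correct counting of parameter multiplicities: for each table row one must include every admissible value of the parameter $\alpha$ or $\omega$ (for instance the three roots of $X^3+X+1$ for $q=8$, and the two primitive cube roots of unity for $q=4$), while respecting the coincidence noted above that for $q=4$ the third row yields a single class for the two values of $\omega$. Carrying out these finite sums yields the numerical values in (3)--(8); beyond this bookkeeping I anticipate no conceptual difficulty.
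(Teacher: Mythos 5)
Your proposal is correct and is exactly the paper's argument: the paper derives the corollary "immediately from the previous two results, via the orbit-stabilizer theorem," i.e.\ each equivalence class is an orbit of $\Gamma\times\Gamma$ of size $(q^3-q)^2/s$ with $s$ the stabilizer size from Proposition~\ref{FM2}, summed over the classes enumerated in the preceding proposition. Your additional details (the collapse of the $q\equiv 2$ and $q\equiv 4\pmod 6$ cases into one formula, and the parameter multiplicities for small $q$, including the $q=4$ coincidence) are the right bookkeeping and check out numerically.
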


\begin{rmk}
Of all the counting results in this section, we find that the most illuminating one is Proposition~\ref{FM2}, about the size of the stabilizer of a permutation rational function under the stated group action.
\end{rmk}



\section{Exceptional rational functions of degrees $8$, $32$, and $128$}\label{sec8}

In this section we prove Theorems~\ref{intro8}, \ref{intro32}, and \ref{intro128}.

\begin{proof}[Proof of Theorem~\ref{intro8}]
It suffices to prove the first sentence of Theorem~\ref{intro8}, since this implies the second sentence by Lemma~\ref{introbd}.
Let $f(X)\in\F_q(X)$ be exceptional of degree $8$.
If $f(X)$ is indecomposable then the conclusion follows from Proposition~\ref{even}.
Thus we may assume $f(X)=g(h(X))$ with $g,h\in\F_q(X)$ of degree at least $2$.  Plainly $g(X)$ and $h(X)$ are exceptional.  The degrees of $g(X)$ and $h(X)$ are $2$ and $4$ in some order, which by Lemma~\ref{intro2} implies that $q$ is even and one of $g(X)$ and $h(X)$ is equivalent to $X^2$.  It follows that $f'(X)=0$, so that $f(X)$ is inseparable and thus $f(X)=u(X^2)$ with $u(X)\in\F_q(X)$ of degree $4$, where again $u(X)$ is exceptional.
By Theorem~\ref{intro4}, $u(X)=\mu\circ L\circ\nu$ where $\mu,\nu\in\F_q(X)$ have degree one and $L(X)=X^4+\alpha X^2+\beta X$ with $\alpha,\beta\in\F_q$ where $L(X)$ has no roots in $\F_q^*$.  Then $\nu\circ X^2=X^2\circ\rho$ for some degree-one $\rho(X)\in\F_q(X)$, so that $f(X)=\mu\circ L\circ X^2\circ\rho$, whence $f(X)$ is equivalent to $L(X^2)$ as desired.
\end{proof}

\begin{proof}[Proof of Theorem~\ref{intro32}]
Let $f(X)\in\F_q(X)$ be an exceptional rational function of degree $32$.
Proposition~\ref{even} implies the conclusion if $f(X)$ is indecomposable, so we may assume that $f=g\circ h$ with $g,h\in\F_q(X)$ both of degree at least $2$.  Then $g(X)$ and $h(X)$ are exceptional.  If either $g(X)$ or $h(X)$ has degree $2$ then Lemma~\ref{intro2} implies $q$ is even and either $g'(X)=0$ or $h'(X)=0$, so that $f'(X)=0$, whence $f=u(X^2)$ where $u(X)\in\F_q(X)$ is exceptional.  Finally, if $\{\deg(g),\deg(h)\}=\{4,8\}$ then Theorem~\ref{intro8} implies $q$ is even, so Theorems~\ref{intro4} and \ref{intro8} imply that both $g(X)$ and $h(X)$ are equivalent to additive polynomials.
\end{proof}

\begin{proof}[Proof of Theorem~\ref{intro128}]
Let $f(X)\in\F_q(X)$ be exceptional of degree $128$.  By Proposition~\ref{even}, if $f(X)$ is indecomposable then $q$ is even and $f(X)$
is equivalent to an additive polynomial.  Now assume that $f=g\circ h$ for some $g,h\in\F_q(X)$ of degree at least $2$.  Then $g,h$ are exceptional.  Since one of $g(X)$ and $h(X)$ has degree $2^r$ with $r\in\{1,3,5\}$, the combination of Lemma~\ref{intro2} and Theorems~\ref{intro8} and \ref{intro32} implies that $q$ is even.
\end{proof}


\section{Indecomposable exceptional rational functions of non-prime power degree}\label{secsmall}

In this section we prove Theorem~\ref{introsmall}.

\begin{lemma}\label{AnSn}
If $f(X)\in\F_q(X)$ is a separable exceptional rational function of degree $n\ge 5$, then the arithmetic monodromy group of $f(X)$ is not in $\{A_n,S_n\}$.
\end{lemma}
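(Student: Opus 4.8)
The plan is to argue by contradiction: suppose the arithmetic monodromy group $A$ equals $A_n$ or $S_n$, and derive a contradiction with the exceptionality criterion of Lemma~\ref{basics}, namely that the one-point stabilizers $A_1$ and $G_1$ have \emph{exactly one} common orbit on $S$ (here $\abs{S}=n$).

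First I would pin down the geometric monodromy group $G$. By Lemma~\ref{basics}, $G$ is a transitive normal subgroup of $A$, and in particular $G\ne 1$ since $n\ge 5>1$. Because $n\ge 5$, the alternating group $A_n$ is simple, so the only normal subgroups of $A_n$ are $1$ and $A_n$, while the only normal subgroups of $S_n$ are $1$, $A_n$, and $S_n$. Hence if $A=A_n$ then necessarily $G=A_n=A$, and if $A=S_n$ then $G\in\{A_n,S_n\}$. In every surviving case we have $G\in\{A_n,S_n\}$. (Note that the cyclic-quotient property of $A/G$ from Lemma~\ref{basics} is not even needed here.)

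Next I would invoke $2$-transitivity. For $n\ge 4$ both $S_n$ and $A_n$ act $2$-transitively on $S$ (indeed $A_n$ is $(n-2)$-transitive), so both $A$ and $G$ are $2$-transitive in all surviving cases. Let $s_0\in S$ be the point fixed by $A_1$, equivalently by $G_1$. By $2$-transitivity of $A$, the stabilizer $A_1$ is transitive on $S\setminus\{s_0\}$, so $A_1$ has exactly the two orbits $\{s_0\}$ and $S\setminus\{s_0\}$; the same holds for $G_1$ by $2$-transitivity of $G$. Therefore $A_1$ and $G_1$ share \emph{both} orbits $\{s_0\}$ and $S\setminus\{s_0\}$, giving at least two common orbits on $S$. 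This contradicts the exceptionality of $f(X)$ via the orbit criterion in Lemma~\ref{basics}, which requires exactly one common orbit, and the proof is complete.

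I expect no serious obstacle: the only points requiring care are the enumeration of the normal subgroups of $A_n$ and $S_n$ (valid precisely because $n\ge 5$ makes $A_n$ simple) and the $2$-transitivity of $A_n$ (valid for $n\ge 4$). Together these force the point stabilizer $G_1$ to be transitive on $S\setminus\{s_0\}$, so that the large orbit $S\setminus\{s_0\}$ is common to $A_1$ and $G_1$ in addition to the guaranteed fixed point $\{s_0\}$, which is exactly what the exceptionality criterion forbids.
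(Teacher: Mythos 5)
Your proof is correct and follows essentially the same route as the paper: both force $G\supseteq A_n$ using simplicity of $A_n$ for $n\ge 5$ (the paper via the bound $\abs{G\cap A_n}\ge n/2>1$ for the transitive normal subgroup $G$, you via the standard classification of normal subgroups of $A_n$ and $S_n$), and then conclude that $A_1$ and $G_1$ share the two orbits $\{s_0\}$ and $S\setminus\{s_0\}$, contradicting the exceptionality criterion of Lemma~\ref{basics}.
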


\begin{proof}
Let $A$ and $G$ be the arithmetic and geometric monodromy groups of $f(X)$, and suppose that $A\in\{A_n,S_n\}$.
Since $n\ge 5$, we know that $A_n$ is simple.
Since $G$ is transitive, we have $\abs{G}\ge n$.
Next, $G_0:=G\cap A_n$ satisfies $[G:G_0]\le [A:A_n]\le 2$, so that $\abs{G}\le 2\abs{G_0}$ and thus $\abs{G_0}\ge\abs{G}/2\ge n/2>1$.  Since $G$ is normal in $A$, also $G_0$ is normal in $A_n$, so that $G_0=A_n$.  But the stabilizer of $1$ in $A_n$ has orbits $\{1\}$ and $\{2,3,\dots,n\}$, so these are also the orbits of the stabilizer of $1$ in each of $A$ and $G$.  This contradicts exceptionality by Lemma~\ref{basics}.
\end{proof}

\begin{proof}[Proof of Theorem~\ref{introsmall}]
Suppose $f(X)\in\F_q(X)$ is an indecomposable exceptional rational function of degree $n$, where $n<4096$ and $n$ is not a prime power. 
Then $f(X)$ is separable
by Lemma~\ref{indecinsep}.
Let $A$ and $G$ be the arithmetic and geometric monodromy groups of $f(X)$, so that $A$ is a subgroup of $S_n$ and $G$ is a transitive normal subgroup of $A$ with cyclic quotient. 
By Lemma~\ref{basics}, indecomposability of $f(X)$ implies that $A$ is primitive, and exceptionality implies that if $A_1$ and $G_1$ are the subgroups of elements of $A$ and $G$ which fix the element $1$ of $\{1,2,\dots,n\}$ then $A_1$ and $G_1$ have a unique common orbit on $\{1,2,\dots,n\}$.  Since $n>1$ (by indecomposability) and $n$ is not a prime power, we have $n\ge 6$, so Lemma~\ref{AnSn} implies $A\notin\{A_n,S_n\}$.  Finally, for each candidate $n$, we use the following Magma code to test whether there are groups satisfying the conditions required of $A$ and $G$.
\begin{verbatim}
for n in [2..4095] do
  if not IsPrimePower(n) then
    for A in PrimitiveGroups(n) do
      if A notin {Alt(n),Sym(n)} then A1:=Stabilizer(A,1);
        for GG in NormalSubgroups(A) do G:=GG`subgroup;
          if #G gt 1 and G ne A and IsCyclic(A/G) and 1 eq
            #{i:i in Orbits(Stabilizer(G,1))|i in Orbits(A1)}
            and IsTransitive(G) then n; continue n;
          end if;
        end for;
      end if;
    end for;
  end if;
end for;
\end{verbatim}
\end{proof}

\begin{rmk}
The reason Theorem~\ref{introsmall} restricts to $n<4096$ is that Magma's database of primitive
groups consists of the primitive groups
of degree less than $4096$.  If this database were
extended to larger degrees then one could extend
Theorem~\ref{introsmall} to larger degrees via the
same proof.
\end{rmk}


\section{Corrections to the paper \cite{FM}} \label{corr}

The paper \cite{FM} did not accurately describe the literature on this topic, which might hinder readers of \cite{FM} who would like to contribute to this topic.  For the benefit of such readers, we now correct several inaccuracies in \cite{FM}.

Although \cite[p.~867]{FM} says ``there is a lack of references that deal, in a compact and self-contained way, with the finite field theoretic framework", in fact this is done in \cite{Co,FriedMc,FGS,GMS,GTZ,LMT,TSchur} among many other sources.

Although the explicit production of all rational functions with prescribed monodromy groups is discussed in \cite[p.~868]{FM}, no references are given, suggesting that \cite{FM} was the first paper to do this in any situation.  In fact, this was done previously in
\cite{FGS,GRZ,GZ,K,MW,T,Zariski,Zariski2}, via an assortment of powerful methods.

The authors of \cite{FM} assert that Theorem 3.2 of their paper is a generalization to rational functions of a result which had been proved previously only for polynomials.  However, this is not the case -- instead, \cite[Thm.~3.2]{FM} is a weaker version of \cite[Thms.~4 and 5]{Co}, which appeared $50$ years before \cite{FM}.  In the intervening $50$ years the latter result has been generalized in
\cite[Thm.~1 and Prop.~1]{FriedMc},
\cite[General Exceptionality Lemma, p.~185]{FGS}, and \cite[Thm.~1.1]{GTZ} to more general settings than rational functions.
Although \cite[p.~871]{FM} asserts that Cohen and Fried only proved this result for polynomials, in fact neither Cohen nor Fried has written any paper proving these results in just the polynomial case (the authors of \cite{FM} say their proofs follow ideas of Cohen, but
the only paper of Cohen's they cite is a survey paper which does not contain any proofs, and the authors of \cite{FM} overlooked all historical comments in that survey paper, including the fact that the ``new" rational function analogues in \cite{FM} were well known).

The result \cite[Lemma~4.1]{FM} is a weaker version of a very special case of \cite[Thm.~2.5]{GTZ}; we note that the proof of \cite[Lemma~4.1]{FM} uses among other things a different result from \cite{GTZ}.

The paper \cite{FM} neglects to mention that major progress towards classifying groups satisfying conditions (1)--(3) on \cite[p.~872]{FM} was made in \cite{F,FGS} and especially \cite{GMS}.  



\normalsize

\end{document}